\newcommand{\e}{\epsilon}
\newcommand{\se}{\sqrt{\epsilon}}
\newcommand{\E}{\mathbb{E}}
\newcommand{\Pro}{\mathbb{P}}
\newcommand{\ga}{\gamma}
\newcommand{\xt}[1]{\mathbf{#1}}
\newtheorem{thm}{Theorem}[section]
\newtheorem{prop}{Proposition}[section]
\newtheorem{lem}{Lemma}[section]
\theoremstyle{nonumberplain}
\newtheorem{proof}{Proof}
\title{Wave Decoherence for the Random Schrödinger Equation with Long-Range Correlations}
\author{Christophe Gomez\thanks{Department of Mathematics, Stanford University, Building 380, Sloan Hall
Stanford, California 94305 USA (chgomez@math.stanford.edu). Tel: +(1)650-723-1968. Fax: +(1)650-725-4066.}}
\begin{document}

\maketitle

\begin{abstract} 

In this paper, we study the loss of coherence of a wave propagating according to the Schrödinger equation with a time-dependent random potential. The random potential is assumed to have slowly decaying correlations. The main tool to analyze the decoherence phenomena is a properly rescaled Wigner transform of the solution of the random Schrödinger equation. We exhibit anomalous wave decoherence effects at different propagation scales.

\end{abstract}

\begin{flushleft}
\textbf{Key words.} Schrödinger equation, random media, long-range processes
\end{flushleft}

\begin{flushleft}
\textbf{AMS subject classification.} 81S30, 82C70, 35Q40 
\end{flushleft}

\section{Introduction.}\label{intro}

The Schrödinger equation with a time-dependent random potential has attracted lots of attention because of its large domains of applications \cite{blomgren, erdos, erdos2, erdos3, ho,spohn}. It is widely used for instance in wave propagation under the paraxial or parabolic approximation \cite{bal3, bal0, bal,bal4, bal2}. This field of research was recently stimulated \cite{bal2,garnier, gomez,marty, marty2, solna} by data collections in wave propagation experiments showing that the medium of propagation presented some long-range effects \cite{dolan, sidi}. Most of the theoretical studies regarding wave propagation in long-range random media hold in one dimensional propagation media, which are very convenient for mathematical studies but not relevant in many applications. 

In this paper, we consider the following random Schrödinger equation 
\begin{equation}\label{eqintro}\begin{split}
i\partial_t \phi+\frac{1}{2}\Delta_{\xt{x}}\phi-\se V(t,\xt{x})\phi&=0, \quad t\geq 0\text{ and }\xt{x}\in \mathbb{R}^d,\\
\phi(0,\xt{x})=\phi_0(\xt{x}),
\end{split}\end{equation}
with a random potential $V(t,\xt{x})$, which is a spatially and temporally homogeneous mean-zero random field. Here, $t\geq0$ represents the temporal variable, $\xt{x}\in\mathbb{R}^d$ the spatial variable with $d\geq 1$, and $\e\ll1$ is a small parameter which represents the relative strength of the random fluctuations. A classical tool to study the \emph{loss of coherence} of the wave field $\phi$ is the Wigner transform defined by
\[ W_\e(t,\xt{x},\xt{k})=\frac{1}{(2\pi)^d}\int d\xt{y} e^{i \xt{k}\cdot\xt{y}}\phi\Big(t,\xt{x}-\frac{\xt{y}}{2}\Big)\overline{\phi\Big(t,\xt{x}+\frac{\xt{y}}{2}\Big)},\]
which is somehow the Fourier transform of the correlation function in space of $\phi$ at point $\xt{x}$. The Wigner transform measures the degree of correlation of the wave field $\phi$ at two different points ($\xt{x}-\xt{y}/2$ and $\xt{x}+\xt{y}/2$). The \emph{loss of coherence} of the wave field $\phi$ (or equivalently the \emph{wave decoherence}) corresponds to the evolution of the degree of correlation in space of the wave field $\phi$ and is captured by the wave number $\xt{k}$ through the evolution in time of the momentum of the Wigner transform $W$. Let us remark that if $V=0$ in \eqref{eqintro}, we have $W(t,\xt{x},\xt{k})=W_0(\xt{x}-t\xt{k},\xt{k})$, where $W_0$ is the Wigner transform of the initial data $\phi_0$. In this case the momentum of $W$ is preserved during the propagation, that is there is no variation of the momentum with respect to time meaning that there is no loss of decoherence.

We refer to \cite{gerard,lions} for the basic properties of the Wigner transform. In our problem the amplitude of the random perturbations are small, so that to observe significant cumulative stochastic effects we have to look at the wave field $\phi$ over long times and large propagation distances. Consequently, we consider the rescaled field 
\begin{equation}\label{phis1}\phi_\e(t,\xt{x})=\phi\Big(\frac{t}{\e^s},\frac{\xt{x}}{\e^s}\Big)\end{equation}
satisfying the scaled random Schrödinger equation 
\[\begin{split}
i\e^s\partial_t \phi_\e+\frac{\e^{2s}}{2}\Delta_{\xt{x}}\phi_\e-\se V\Big(\frac{t}{\e^{s}},\frac{\xt{x}}{\e^s}\Big)\phi_\e&=0, \quad t\geq 0\text{ and }\xt{x}\in \mathbb{R}^d,\\
\phi_\e(0,\xt{x})=\phi_{0,\e}(\xt{x}),
\end{split}\]
and where  $\boldsymbol{s\in(0,1]}$ \textbf{is the propagation scale parameter}.  If the random potential $V$ has rapidly decaying correlations, it has been shown \cite{bal2} that the wave field $\phi_\e$ does not exhibit any significant random perturbations before the propagation scale $\e^{-1}$ ($s=1$). More precisely, the Fourier transform in space of $\phi_\e$ (defined by \eqref{phis1}) properly scaled converges point-wise to a stochastic complex Gaussian limit with a one-time statistic of an Ornstein-Uhlenbeck process. Wave decoherence phenomena have been studied in this context in many papers \cite{bal0, bal, erdos, erdos2, erdos3, fannjiang3, fannjiang2, ho, lukkarinen, spohn} thanks to the following Wigner transform for $s=1$
\begin{equation}\label{wignerint} \begin{split}
W_\e(t,\xt{x},\xt{k})&=\frac{1}{(2\pi)^d}\int d\xt{y} e^{i \xt{k}\cdot\xt{y}}\phi_\e\Big(t,\xt{x}-\e\frac{\xt{y}}{2}\Big)\overline{\phi_\e\Big(t,\xt{x}+\e\frac{\xt{y}}{2}\Big)}\\
&=\frac{1}{(2\pi)^d}\int d\xt{y} e^{i \xt{k}\cdot\xt{y}}\phi\Big(\frac{t}{\e},\frac{\xt{x}}{\e}-\frac{\xt{y}}{2}\Big)\overline{\phi\Big(\frac{t}{\e},\frac{\xt{x}}{\e}+\frac{\xt{y}}{2}\Big)}.
\end{split}\end{equation}
The Wigner transform \eqref{wignerint} analyzes the loss of coherence of $\phi_\e$ on a spatial correlation scale of order  $\e$ at the macroscopic propagation scale $\e^{-1}$, and the loss of coherence of $\phi$ on a spatial correlation scale of order  $1$ at the microscopic propagation scale. Let us note that 
\[\lvert \phi_\e(t,x)\rvert^2=\int d\xt{k} W_\e(t,\xt{x},\xt{k}),\]
so that we may think of $W_\e$ as a wave number resolved energy density. However, the terminology density is not well appropriate since $W_\e$ is not always positive but it becomes positive in the limit $\e\to 0$ \cite{lions}. 

In several context involving rapidly decorrelating random potentials, it has been shown that the expectation of the Wigner transform $\E[W_\e(t,\xt{x},\xt{k})]$ converges as $\e$ goes to $0$ to the solution $W$ of the radiative transport equation 
\begin{equation}\label{radtransint}
\partial_t W +\xt{k}\cdot\nabla_\xt{x} W= \int d\xt{p}\sigma(\xt{p},\xt{k})(W(t,\xt{x},\xt{p})-W(t,\xt{x},\xt{k})),
\end{equation}
where the transfer coefficient $\sigma(\xt{p},\xt{k})$ depends on the power spectrum of the two-point correlation function of the random potential $V$. Moreover, in some cases \cite{bal3, bal,bal4, fannjiang3}, it has been shown that the limit $W$ is often self-averaging, that is $W_\e$ converges in probability to the deterministic limit $W$ for the weak topology on $L^2(\mathbb{R}^{2d})$. In other words, the wave decoherence mechanism described by the radiative transfer equation \eqref{radtransint} does not depend on the particular realization of the random medium.

In this paper, we investigate the loss of coherence of a wave propagating according to the Schrödinger equation \eqref{eqintro} involving a random potential $V$ with slowly decaying correlations defined in Section \ref{grfield}. In this context, the behavior of the scaled field $\phi_\e$ defined by \eqref{phis1} evolves on different propagation scales $\e^{-s}$ \cite{bal2, gomez}. In \cite[Theorem 1.2]{bal2} the authors study $\phi_\e$ itself on the propagation scales $\e^{-s}$, with $s=1/(2\kappa)$ and $\kappa>1/2$, and show that the Fourier transform of $\phi_\e$ properly scaled converges point-wise in distribution to a complex exponential of a fractional Brownian motion with Hurst index $\kappa$, where $\kappa$ depends on the statistic of the random potential (see Section \ref{sectionsk}). This result means that $\e^{-s}$, with $s=1/(2\kappa)<1$, is the first propagation scale on which the random perturbations become significant, and induces a random phase modulation on the propagating wave. In \cite[Theorem 2.2]{gomez} the author studies the loss of coherence of $\phi_\e$ on the propagation scale $\e^{-1}$ ($s=1$), and shows that the Wigner transform \eqref{wignerint} of $\phi_\e$ converges in probability for the weak topology on $L^2(\mathbb{R}^{2d})$ to the unique solution of a deterministic radiative transfer equation similar to \eqref{radtransint}. In other words, the wave decoherence happening on this propagation scale ($\e^{-1}$) does not depend on the particular realization of the random potential. However, even if the radiative transfer equation has similar structure in both cases (for rapidly and slowly decaying correlations), the presence of a potential with long-range correlations have a striking effect. In contrast with the rapidly decorrelating case studied in \cite{bal0}, the scattering coefficient $\Sigma(\xt{k})=\int d\xt{p} \sigma(\xt{k},\xt{p})=+\infty$ is not defined anymore. The radiative transfer equation is however still well defined because of the difference $W(t,\xt{x},\xt{p})-W(t,\xt{x},\xt{k})$ which balances the singularity introduced by the long-range correlation assumption.  Moreover, these long-range correlations imply an instantaneous regularizing effect of the radiative transfer equation \cite[Theorem 3.1]{gomez}. Consequently, these results show a qualitative and thorough difference between the rapidly and slowly deccorelating cases. In fact, in contrast with the rapidly decorrelating case, for which the phase and the phase space density evolve on the same propagation scale $\e^{-1}$ \cite{bal2}, the phase of $\phi_\e$ and its phase space energy now evolve on different propagation scales. 

The main goal of this paper is to study the loss of coherence of a wave propagating according to the Schrödinger equation \eqref{eqintro} involving a random potential $V$ with slowly decaying correlations for propagation scale parameters $s\in(1/(2\kappa),1)$. In fact, for $s>1/(2\kappa)$ the random phase modulation obtained for $s=1/(2\kappa)$ implies that the wave field $\phi_\e$ produces very fast phase modulation, so that for sufficiently long times and large propagation distances  $\e^{-s}$ (with $s>1/(2\kappa)$) the wave coherence should be broken. Let us note that for a given propagation scale parameter $s$, the wave decoherence can be too small to be observed. For instance in \cite{gomez}, the author shows using the Wigner transform \eqref{wignerint} that there is no significant wave decoherence on the wave field $\phi_\e$ on spatial scales of order $\e^s$, before $s=1$. As we will see in Section \ref{sectionint}, for $s<1$ wave decoherence takes place first on large spatial scales and then propagates to the smaller ones as the propagation scale parameter $s$ increases. The larger the propagation scale parameter $s$ is the smaller the spatial scale is to observe wave decoherence (see Figure \ref{schema}).   To exhibit wave decoherence for $s<1$, we need to consider a properly scaled Wigner transform of the field $\phi_\e$ (see Section \ref{wignersection}). Depending on the propagation scale parameter $s$, we show that this scaled Wigner transform converges in probability, for the weak topology on $L^2(\mathbb{R}^{2d})$ to the unique solution of a fractional diffusion equation. This momentum diffusion equation describes the wave decoherence mechanism, and shows that it does not depend on the particular realization of the random potential. The anomalous momentum diffusions obtained for $s\in(1/(2\kappa),1]$ allow us to exhibit a damping coefficient, describing the decoherence rate, obeying a power law with exponent in $(0,1)$. 

The organization of this paper is as follows: In Section \ref{section1}, we present the random Schrödinger equation that will be studied in this paper; then we present the construction of the random potential and we introduce the long-range correlation assumption on the potential $V$ used throughout this paper; finally, we introduce the rescale Wigner transform which is the main tool to describe the wave decoherence mechanisms. The results stated in Section \ref{sectionsk} and Section \ref{sections1} have been respectively shown in \cite{bal2} and \cite{gomez}, but we recall these results to provide a self-contained presentation of the wave decoherence phenomena. In Section \ref{sectionsk}, we present the behavior of the field $\phi_\e$ for the propagation scale parameter $s=1/(2\kappa)$.  In Section \ref{sectionint}, we state the main result of this paper. We present the asymptotic behavior in long-range random media of a properly scaled Wigner transform over the intermediate range of propagation scale parameter $s\in(1/(2\kappa),1)$. In Section \ref{sections1}, we describe the asymptotic evolution in long-range random media of the phase space energy density of the solution of the random Schrödinger equation for $s=1$. Finally, in Section \ref{proofpropmar} we recall the proof of Proposition \ref{propmar}, and Section \ref{proof} is devoted to the proofs of Theorem \ref{thasymptotic} and Theorem \ref{thasymptotic3}.

\section*{Acknowledgment}

This work was supported by AFOSR FA9550-10-1-0194 Grant. I wish to thank Lenya Ryzhik for his suggestions, and the referee for his careful reading of the manuscript.

\section{The Random Schrödinger equation}\label{section1}

This section introduces in a first time the random Schrödinger equation studied in this paper, then in a second time it introduces the random potential with long-range correlation properties. Finally, we introduce the Wigner transform which is the main tool in this paper to study the loss of coherence of solutions of the Schrödinger equation.
 
We consider the dimensionless form of the Schrödinger equation on $\mathbb{R}^d$ with a time-dependent random potential:
\begin{equation}\label{schrodingereq0}
i \partial_t \phi+\frac{1}{2}\Delta_{\xt{x}} \phi-\e^{\frac{1-\ga}{2}} V\Big(\frac{t}{\e^{\gamma}},\xt{x}\Big)\phi =0,\end{equation}
with $\ga\in [0,1)$. $\ga$ is a parameter characterizing the correlation length in time. If $\ga=0$ the correlation lengths in space and time are of the same order, but if $\ga\in(0,1)$ the correlation length in time is small compared to the correlation length in space.  In \eqref{schrodingereq0} the strength of the random perturbations are small, so that to observe significant cumulative stochastic effects we have to look at the wave field $\phi$ over long times and large propagation distances. Consequently, throughout this paper we consider the following rescaled wave field :
\begin{equation}\label{phis}\phi_\e(t,\xt{x})=\phi\Big(\frac{t}{\e^s},\frac{\xt{x}}{\e^s}\Big), \quad \text{with}\quad s\in(0,1].\end{equation}
Moreover, throughout this paper the parameter $\boldsymbol{s\in(0,1]}$ \textbf{represents the propagation scale parameter}, and the scaled wave field $\phi_\e$ satisfies the scaled Schrödinger equation 
\begin{equation}\label{schrodingereq}
i\e^s \partial_t \phi_\e+\frac{\e^{2s}}{2}\Delta_{\xt{x}} \phi_\e-\e^{\frac{1-\ga}{2}} V\Big(\frac{t}{\e^{s+\gamma}},\frac{\xt{x}}{\e^s}\Big)\phi_\e =0\quad \text{with}\quad \phi_\e(0,\xt{x})=\phi_{0,\e}(\xt{x}).
\end{equation}
Here $\Delta_{\xt{x}}$ is the Laplacian on $\mathbb{R}^d$ given by $\Delta=\sum_{j=1}^d \partial_{x_j}^2$. $(V(t,\xt{x}),t\geq0,\xt{x}\in \mathbb{R}^d)$ is a random potential given by a  stationary zero-mean continuous random process in space and time, and whose properties are described in Section \ref{grfield}. The initial datum $\phi_{0,\e}(\xt{x})=\phi_{0,\e}(\xt{x},\zeta)$ is a random function with respect to a probability space $(S,\mathcal{S},\mu(d\zeta))$, and independent to the random potential $V$. This randomness on the initial data is called mixture of states. This terminology comes from the quantum mechanics, and the reason for introducing this additional randomness will be explained more precisely in Section \ref{wignersection}.

\subsection{Random potential}\label{grfield}

This section is devoted to the introduction of the random potential $V$ considered in this paper, and is also a short remainder about some properties of Gaussian random fields that we use in the proof of Theorem \ref{thasymptotic} and Theorem \ref{thasymptotic3}. All the properties of the random field $V$ exposed in this section result from the standard properties of Gaussian random fields presented in \cite{adlertaylor} for instance.

Before introducing the random potential we need some notations. Let $\widehat{R}_0$ be a positive function such that $\widehat{R}_0\in L^1(\mathbb{R}^d)$, $\widehat{R}_0(-\xt{p})=\widehat{R}_0(\xt{p})$, decaying rapidly at infinity and having a singularity at $\xt{p}=0$. Let us consider the following Hilbert space : 
\[\mathcal{H}_m=\Big\{\varphi \text{ such that}\quad\varphi(\xt{p})=\overline{\varphi(-\xt{p})}\quad\text{and}\quad \int_{\mathbb{R}^d}m(d\xt{p})\lvert \varphi(\xt{p})\rvert^2 <+\infty\Big\},\] 
equipped with the inner product
\[ \big<\varphi,\psi\big>_{\mathcal{H}_m}=\int_{\mathbb{R}^d} m(d\xt{p})\,\,\varphi (\xt{p}) \overline{\psi(\xt{p})}\quad \forall (\varphi,\psi)\in \mathcal{H}_m\times\mathcal{H}_m,\]
and where $m(d\xt{p})=\widehat{R}_0(\xt{p})d\xt{p}$ is a real finite measure. Let $(v_n)_{n\geq 0}$ be an orthonormal basis of $\mathcal{H}_m$ and let $(\widehat{V}_n)_{n\geq0}$ be a family of  real-valued stationary zero-mean Gaussian processes such that
\[\E[\widehat{V}_p(t)\widehat{V}_q(s)]=\int_{\mathbb{R}^d} m(d\xt{p}) e^{-\mathfrak{g}(\xt{p}) \lvert t-s\rvert } v_p(\xt{p})\overline{v_q(\xt{p})},\quad \forall s,t\geq 0\text{ and }\forall p,q\geq 0.\]
Now, let us consider the following real-valued linear functional on $\mathcal{H}_m$ defined by
\[ \widehat{V}(t)(\varphi)=\big<\widehat{V}(t),\varphi\big>_{\mathcal{H}'_m,\mathcal{H}_m}=\sum_{n\geq 0} \widehat{V}_n(t) \big<v_n,\varphi\big>_{\mathcal{H}_m},\]
where $\mathcal{H}'_m$ stands for the dual space of $\mathcal{H}_m$, which is well defined since
\[
\E[\lvert \big<\widehat{V}(t),\varphi\big>_{\mathcal{H}'_m,\mathcal{H}_m} \rvert^2]=\sum_{p,q\geq 0} \big<v_p,v_q\big>_{\mathcal{H}_m}  \big<v_p,\varphi\big>_{\mathcal{H}_m}\overline{ \big<v_q,\varphi\big>_{\mathcal{H}_m}}=\|\varphi\|^2_{\mathcal{H}_m}, \quad \forall \varphi\in\mathcal{H}_m.
\]
As a result, $(\widehat{V}(t))_{t\geq 0}$ is a real-valued stationary zero-mean Gaussian process on  $\mathcal{H}'_m$ such that
\[
\mathbb{E}\big[\big<\widehat{V}(t),\varphi\big>_{\mathcal{H}'_m,\mathcal{H}_m}\big<\widehat{V}(s),\psi\big>_{\mathcal{H}'_m,\mathcal{H}_m}\big]=\int_{\mathbb{R}^d}m(d\xt{p})\,\,e^{-\mathfrak{g}(\xt{p}) \lvert t-s\rvert }\varphi(\xt{p})\overline{\psi(\xt{p})},
\]
for all $t,s\geq 0$ and for all $(\varphi,\psi)\in\mathcal{H}_m\times\mathcal{H}_m$, which corresponds to a covariance function given by
\[\E[\widehat{V}(t,d\xt{p}_1)\widehat{V}(s,d\xt{p}_2)]=(2\pi)^d \tilde{R}(t-s,\xt{p}_1)\delta(\xt{p}_1+\xt{p}_2),\quad \]
where the spatial power spectrum is given by
\begin{equation}\label{rtilde}
\tilde{R}(t,\xt{p})=e^{-\mathfrak{g}(\xt{p})\lvert t\rvert }\widehat{R}_0(\xt{p}).
\end{equation} 
Here, the nonnegative function $\mathfrak{g}$, such that $\mathfrak{g}(\xt{p})=\mathfrak{g}(-\xt{p})$, is the spectral gap. Particular assumptions involving the spectral gap $\mathfrak{g}$ will be introduced at the end of this section to ensure long-range correlation properties on the potential $V$ in \eqref{schrodingereq} which is defined as follows :
\begin{equation}\label{vtx} V(t,\xt{x})=\frac{1}{(2\pi)^d}\int \widehat{V}(t,d\xt{p})e^{i\xt{p}\cdot\xt{x}},\quad \forall t\geq 0\text{ and }\forall\xt{x}\in\mathbb{R}^d,\end{equation}
so that $(V(t,\xt{x}),t\geq0,\xt{x}\in \mathbb{R}^d)$ is a real-valued stationary zero-mean real Gaussian process with
\begin{equation}\label{covfunc}\begin{split} \mathbb{E}\big[V(t,\xt{x})V(s,\xt{y})\big]=R(t-s,\xt{x}-\xt{y})&=
\frac{1}{(2\pi)^{d}}\int d\xt{p}\tilde{R}(t-s,\xt{p})e^{i\xt{p}\cdot(\xt{x}-\xt{y})}\\
&=\frac{1}{(2\pi)^{d+1}}\int d\omega d\xt{p}\widehat{R}(\omega,\xt{p})e^{i\omega(t-s)}e^{i\xt{p}\cdot(\xt{x}-\xt{y})},\end{split}\end{equation} 
for all $t,s\geq 0$ and for all $(\xt{x},\xt{y})\in\mathbb{R}^{2d}$, and where space and time power spectrum is given by
\begin{equation}\label{spectraldens}
\widehat{R}(\omega,\xt{p})=\frac{2\mathfrak{g}(\xt{p})\widehat{R}_0(\xt{p})}{\omega^2+\mathfrak{g}^2(\xt{p})}.
\end{equation}
Moreover, according to \cite[Theorem 1.4.1 pp. 20]{adlertaylor} and the fact that
\[\mathbb{E}\big[\big(V(t_1,x) -V(t_2,y)\big)^2\big]^{1/2} \leq C \left(\int d\xt{p}\widehat{R}_0(\xt{p})\right)\big(\lvert t_1-t_2\rvert+\lvert \xt{x}-\xt{y}\rvert \big),\quad\forall(t_1,t_2,\xt{x},\xt{y})\in[0,T]^2\times K^2,\]
the random potential $V$ defined by \eqref{vtx} is continuous and bounded with probability one on each compact subset $K$ of $\mathbb{R}_+\times\mathbb{R}^d$. Finally, according to the shape of the spatial power spectrum \eqref{rtilde}, we have the following proposition which will be used in the proof of Theorem \ref{thasymptotic} and Theorem \ref{thasymptotic3} based on the perturbed-test-function method.
\begin{prop}\label{propmar} 
Let 
\begin{equation}\label{filtration}\mathcal{F}_t=\sigma(\widehat{V}(s,\cdot),s\leq t)\end{equation}
be the $\sigma$-algebra generated by $(\widehat{V}(s,\cdot), s\leq t)$. We have 
\begin{equation}\label{markovesp}
\mathbb{E}\big[ \widehat{V}(t+h,\cdot) \vert \mathcal{F}_t\big]=e^{-\mathfrak{g}(\xt{p})h}\widehat{V}(t,\cdot)\end{equation}
and for all $(\varphi,\psi)\in \mathcal{H}_m\times \mathcal{H}_m$
\begin{equation}\label{markovvar}\begin{split}
\mathbb{E}\Big[ \big<\widehat{V}(t+h),\varphi\big>_{\mathcal{H}'_m,\mathcal{H}_m} \big<\widehat{V}(t+h),\psi\big>_{\mathcal{H}'_m,\mathcal{H}_m}&-  \mathbb{E}\big[  \big<\widehat{V}(t+h),\varphi\big>_{\mathcal{H}'_m,\mathcal{H}_m} \vert \mathcal{F}_t\big]\mathbb{E}\big[  \big<\widehat{V}(t+h),\psi\big>_{\mathcal{H}'_m,\mathcal{H}_m}\vert \mathcal{F}_t\big]  \Big\vert \mathcal{F}_t\Big]\\
&=\int d\xt{p}\,\, \varphi(\xt{p})\psi(\xt{-p})\widehat{R}_0(\xt{p})\left(1-e^{-2\mathfrak{g}(\xt{p})h} \right).
\end{split}\end{equation}
\end{prop}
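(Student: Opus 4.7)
The strategy is to exploit the Gaussianity of the family $(\widehat{V}(t))_{t\geq 0}$ throughout. Because conditional expectations with respect to $\sigma$-algebras generated by jointly Gaussian vectors are linear orthogonal projections, and conditional covariances are deterministic and equal to the covariances of the ``innovation'' part, both assertions reduce to covariance computations that follow directly from the prescribed two-point function $\tilde R(t,\xt{p})=e^{-\mathfrak{g}(\xt{p})|t|}\widehat{R}_0(\xt{p})$.

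For the first identity, I would show that the residual $\widehat{V}(t+h,\cdot)-e^{-\mathfrak{g}(\cdot)h}\widehat{V}(t,\cdot)$ is independent of $\mathcal{F}_t$. By joint Gaussianity it suffices to check that, for every $\varphi,\psi\in\mathcal{H}_m$ and every $s\in[0,t]$,
\begin{equation*}
\E\!\left[\big(\langle\widehat{V}(t+h),\varphi\rangle-\langle e^{-\mathfrak{g}(\cdot)h}\widehat{V}(t),\varphi\rangle\big)\langle\widehat{V}(s),\psi\rangle\right]=0.
\end{equation*}
Using the identity $\langle e^{-\mathfrak{g}(\cdot)h}\widehat{V}(t),\varphi\rangle=\langle\widehat{V}(t),e^{-\mathfrak{g}(\cdot)h}\varphi\rangle$ (valid because $e^{-\mathfrak{g}(\xt{p})h}$ is real and even in $\xt{p}$, so the shifted test function still lies in $\mathcal{H}_m$) together with the covariance formula of the excerpt, the left-hand side becomes
\begin{equation*}
\int m(d\xt{p})\,\varphi(\xt{p})\overline{\psi(\xt{p})}\big(e^{-\mathfrak{g}(\xt{p})(t+h-s)}-e^{-\mathfrak{g}(\xt{p})h}\,e^{-\mathfrak{g}(\xt{p})(t-s)}\big),
\end{equation*}
which vanishes identically since $t+h-s\geq 0$ and $t-s\geq 0$ so the two exponentials factor. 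Gaussianity then upgrades orthogonality to independence, and the projection formula \eqref{markovesp} follows.

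For the second identity, write $\widehat{V}(t+h)=e^{-\mathfrak{g}(\cdot)h}\widehat{V}(t)+Z(t,h)$ with $Z(t,h)$ independent of $\mathcal{F}_t$ (by the step above). Because the first summand is $\mathcal{F}_t$-measurable, the conditional covariance of $\langle\widehat{V}(t+h),\varphi\rangle$ and $\langle\widehat{V}(t+h),\psi\rangle$ given $\mathcal{F}_t$ equals the unconditional covariance of $\langle Z(t,h),\varphi\rangle$ and $\langle Z(t,h),\psi\rangle$. Expanding and using orthogonality of $\widehat{V}(t)$ and $Z(t,h)$,
\begin{equation*}
\E[\langle Z(t,h),\varphi\rangle\langle Z(t,h),\psi\rangle]
=\int m(d\xt{p})\,\varphi(\xt{p})\overline{\psi(\xt{p})}\bigl(1-e^{-2\mathfrak{g}(\xt{p})h}\bigr),
\end{equation*}
by computing the two remaining covariances from the definition of $\tilde R$ at time-lags $0$ and $0$ respectively. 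Finally, using $m(d\xt{p})=\widehat{R}_0(\xt{p})d\xt{p}$ and the reality/parity constraint $\overline{\psi(\xt{p})}=\psi(-\xt{p})$ built into $\mathcal{H}_m$, this rewrites as the right-hand side of \eqref{markovvar}.

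I do not expect any serious obstacle; everything is a projection-plus-covariance computation driven by the exponential semigroup in the kernel. The only point that needs genuine care, rather than routine manipulation, is the distributional nature of $\widehat{V}(t)$: one must argue that $e^{-\mathfrak{g}(\cdot)h}\widehat{V}(t)$ is a well-defined element of $\mathcal{H}'_m$ (which follows because multiplication by the bounded even function $e^{-\mathfrak{g}(\xt{p})h}$ maps $\mathcal{H}_m$ to itself) and that all pairings remain well defined, so that the Gaussian orthogonality argument can be applied mode by mode via the basis $(v_n)_{n\geq 0}$.
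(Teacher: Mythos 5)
Your proof is correct, and it shares the paper's central idea: the innovation decomposition $\widehat{V}(t+h,\cdot)=e^{-\mathfrak{g}(\xt{p})h}\widehat{V}(t,\cdot)+Y$ together with the fact that uncorrelated jointly Gaussian variables are independent. Where you genuinely differ is in how the conditioning on $\mathcal{F}_t$ is organized. The paper reduces \eqref{markovesp} and \eqref{markovvar} to statements about conditioning on finitely many variables $\widehat{V}(t_1,\cdot),\dots,\widehat{V}(t_n,\cdot)$ with $t_1\leq\dots\leq t_n\leq t$, and proves these by induction on $n$, at each step re-expanding the innovation and checking that its conditional expectation given the earlier times vanishes. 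You instead verify in one pass that the innovation is orthogonal to $\langle\widehat{V}(s),\psi\rangle$ for \emph{every} $s\leq t$ and every $\psi\in\mathcal{H}_m$ (your covariance computation, using $e^{-\mathfrak{g}(\xt{p})(t+h-s)}=e^{-\mathfrak{g}(\xt{p})h}e^{-\mathfrak{g}(\xt{p})(t-s)}$ for $s\leq t$, is exactly right), and then invoke joint Gaussianity of the whole family to upgrade orthogonality to independence from the full $\sigma$-algebra $\mathcal{F}_t$; both \eqref{markovesp} and the conditional covariance identity \eqref{markovvar} then fall out in a single step, with no induction. Your route buys brevity and makes transparent that the result is nothing but the orthogonal-projection structure of Gaussian conditioning; the paper's route is more pedestrian but only ever manipulates conditional expectations given finitely many Gaussian vectors, which is precisely the ``weak formulation'' mechanics the author says he wants to exhibit. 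You also correctly isolate the one point needing care in either approach: that multiplication by the bounded, even function $e^{-\mathfrak{g}(\cdot)h}$ maps $\mathcal{H}_m$ into itself, so that $e^{-\mathfrak{g}(\cdot)h}\widehat{V}(t,\cdot)$ is a legitimate element of $\mathcal{H}'_m$ and the pairing $\langle e^{-\mathfrak{g}(\cdot)h}\widehat{V}(t),\varphi\rangle=\langle\widehat{V}(t),e^{-\mathfrak{g}(\cdot)h}\varphi\rangle$ is well defined; the final rewriting of $\int m(d\xt{p})\varphi(\xt{p})\overline{\psi(\xt{p})}(1-e^{-2\mathfrak{g}(\xt{p})h})$ into the stated right-hand side via $\overline{\psi(\xt{p})}=\psi(-\xt{p})$ and $m(d\xt{p})=\widehat{R}_0(\xt{p})d\xt{p}$ is likewise correct.
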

The proof of Proposition \ref{propmar} is standard but we still prove it in Section \ref{proofpropmar} just to show how to obtain these results in a weak formulation.

\subsection{Slowly decorrelating assumption} \label{SDCAsec}

In this paper we are interested in the Schrödinger equation with a random potential with long-range correlations. Let us introduce some additional assumptions on the spectral gap $\mathfrak{g}$ of the spatial power spectrum \eqref{rtilde} in order to give slowly decaying correlation properties to the random potential $V$ defined by \eqref{vtx}.

Let us note that for all fixed $t\geq 0$, the random field $V(t,\cdot)$ has spatial slowly decaying correlations. In fact, if we freeze the temporal variable, the autocorrelation function of the random potential $V(t,\cdot)$ is given by
\[R(t,\xt{x})=\E[V(t,\xt{x}+\xt{y})V(t,\xt{y})]=\frac{1}{(2\pi)^d}\int d\xt{p}  \widehat{R}_0(\xt{p})e^{i\xt{p}\cdot\xt{x}}\] 
where $\widehat{R}_0(\xt{p})$ is assumed to have a singularity in $0$, so that $R(t,\cdot)\not\in L^1(\mathbb{R}^d)$. As a result,  $(V(t))_{t\geq 0}$ models a family of random fields on $\mathbb{R}^d$ with spatial long-range correlations which evolves with respect to time. However, since  \eqref{schrodingereq0} is a time evolution problem, we have to take care of the evolution of the random potential $V$ with respect to the temporal variable. In fact, if $V$ has rapidly decaying correlation in time, $(V(t_1),V(t_2))$ has now rapidly decaying spatial correlations, and the evolution problem \eqref{schrodingereq0} behaves like in the mixing case addressed in \cite{bal6}.  As a result, even if at each fixed time the spatial correlations are slowly decaying, the resulting time evolution problem behaves as if the random potential has rapidly decaying correlations. Consequently, we have to introduce a long-range correlation assumption with respect to the temporal variable. 

For the sake of simplicity, throughout this paper we assume that
\begin{equation}\label{SDCAreg}
\mathfrak{g}(\xt{p})=\nu \lvert \xt{p}\rvert ^{2\beta}\quad\text{and}\quad \widehat{R}_0(\xt{p})=\frac{a(\xt{p})}{\lvert\xt{p}\rvert^{d+2(\alpha-1)}},
\end{equation}
where $a$ is a positive continuous function decaying rapidly at infinity and such that $a(0)>0$. Moreover, we assume that $\beta\in(0,1/2]$, $\alpha\in(1/2,1)$, and $\alpha+\beta>1$, so that 
\begin{equation}\label{SDCA}
\int d\xt{p}\frac{\widehat{R}_0(\xt{p})}{\mathfrak{g}(\xt{p})}=+\infty,\end{equation}
since
\begin{equation}\label{theta}\frac{\widehat{R}_0(\xt{p})}{\mathfrak{g}(\xt{p})}\underset{\xt{p}\to 0}{\sim} \frac{a(0)}{\lvert \xt{p}\rvert^{d+\theta}}\quad \text{with}\quad\theta=2(\alpha+\beta-1)\in(0,1).\end{equation}
A similar configuration has already been considered in \cite{bal2} to study the propagation of the wave field $\phi_\e$ defined by \eqref{phis} in a random media with long-range correlations. As a result, these assumptions permit to model a random field $V(t,\xt{x})$ with spatial long-range correlations for each fixed time $t\geq 0$, and with slowly decaying correlations in time because of the following relation :
\begin{equation}\label{SDCAcor}\forall (s,\xt{x},\xt{y})\in \mathbb{R}_+\times\mathbb{R}^{2d}\quad \int_0^{+\infty} dt \big\lvert\E\big[V(t+s,\xt{x}+\xt{y})V(s,\xt{y})\big]\big\rvert=+\infty \Longleftrightarrow \int d\xt{p}  \frac{\widehat{R}_0(\xt{p})}{\mathfrak{g}(\xt{p})} =+\infty.  \end{equation}
If fact, if \eqref{SDCA} holds, we have
\[\lim_{A\to+\infty}\int_0^{A} dt \big\lvert\E\big[V(t+s,\xt{x}+\xt{y})V(s,\xt{y})\big]\big\rvert\geq  \lim_{A\to+\infty}\int d\xt{p}  \frac{\widehat{R}_0(\xt{p})}{\mathfrak{g}(\xt{p})} (1-e^{-\mathfrak{g}(\xt{p})A})-\int d\xt{p}  \frac{\widehat{R}_0(\xt{p})}{\mathfrak{g}(\xt{p})}\lvert e^{i\xt{p}\cdot\xt{x}}-1\rvert,
\]
and the converse implication is obvious by taking $\xt{x}=0$. Consequently, throughout this paper we say that the family $(V(t))_{t\geq 0}$ of random fields with spatial long-range correlations has slowly decaying correlations in time if \eqref{SDCA} holds, and rapidly decaying correlations in time otherwise.

\subsection{Wigner transform}\label{wignersection}

In this paper we study wave decoherence phenomena, using the following Wigner transform \eqref{wignertrans} of the wave field \eqref{phis} satisfying the Schrödinger equation \eqref{schrodingereq}, happening on different propagation scales $s$. In this paper we consider the Wigner transform of the field $\phi_\e$, averaged with respect to the randomness of the initial data, defined by:
\begin{equation}\label{wignertrans}\begin{split}
W_\e (t,\xt{x},\xt{k})&=\frac{1}{(2\pi)^d}\int_{\mathbb{R}^d\times S}d\xt{y}\mu(d\zeta)e^{i\xt{k}\cdot\xt{y}} \phi_\e \Big(t,\xt{x}-\e^{s-s_c}\frac{\xt{y}}{2},\zeta\Big)\overline{\phi_\e \Big(t,\xt{x}+\e^{s-s_c}\frac{\xt{y}}{2},\zeta\Big)}\\
&=\frac{1}{(2\pi)^d}\int_{\mathbb{R}^d\times S}d\xt{y}\mu(d\zeta)e^{i\xt{k}\cdot\xt{y}} \phi \Big(\frac{t}{\e^s},\frac{\xt{x}}{\e^s}-\frac{\xt{y}}{2\e^{s_c}},\zeta\Big)\overline{\phi \Big(\frac{t}{\e^s},\frac{\xt{x}}{\e^s}+\frac{\xt{y}}{2\e^{s_c}},\zeta\Big)},\\
\end{split}\end{equation}
where $\boldsymbol{s_c\in[0,s]}$ \textbf{is the spatial correlation parameter}, and $(S,\mathcal{S},\mu(d\zeta))$ is a probability space. Let us remark that the scaled Wigner transform  \eqref{wignertrans} is a real-valued function. We discuss below the reason of introducing this probability space, and we refer to \cite{gerard, lions} for the basic properties of the Wigner distribution.  The scaled Wigner transform \eqref{wignertrans}, which is somehow the Fourier transform of the correlation function in space of the wave field $\phi$ (resp. $\phi_\e$) around $\xt{x}$, measures the degree of correlation of  the wave field. In other words, it captures the evolution of the degree of correlation of the wave field $\phi$ over a spatial correlation scale of order $\e^{-s_c}$, on the microscopic propagation scale. Or in the same way, it captures the evolution of the degree of correlation of the rescaled wave field $\phi_\e$ over a spatial correlation scale of order $\e^{s-s_c}$, on the macroscopic propagation scale $\e^{-s}$. 

The \emph{loss of coherence} of the wave field $\phi$ satisfying \eqref{schrodingereq0} (or equivalently the \emph{wave decoherence}) corresponds to the evolution in time of the momentum of the Wigner transform $W_\e$. In other words, if the momentum of the Wigner transform is preserved during the propagation there is no wave decoherence, and in the opposite case the wave decoherence mechanism is described by the evolution of the momentum. 

In this paper we consider a rescaled version of the Wigner transform involving the two parameters $s$ and $s_c$. The propagation scale parameter $s$ has been introduced in Section \ref{intro} and characterizes the order of magnitude of the propagation times and the propagation distances on which we consider the wave field $\phi$ satisfying \eqref{schrodingereq0}. The spatial correlation parameter $s_c$ characterizes the spatial scale on which we study the evolution of the degree of correlation of the wave field $\phi$. Let us note that the cases $s_c<s$ study the local loss of coherence of the wave field, while the case $s=s_c$ study the nonlocal loss of coherence of the wave field. We will see in Section \ref{sectionint} and Section \ref{sections1} that for a given propagation scale parameter $s$ the loss of coherence of the wave field $\phi$ can be observed at a particular spatial correlation parameter $s_c$ depending on $s$ (see \eqref{sc}). The larger the propagation scale parameter is the shorter the decoherence scale parameter is. In other words, depending on the propagation time and propagation distance we adjust the spacial correlation scale characterized by $s_c$ to exhibit the loss of coherence (see Figure \ref{schema}).  For instance, according to \cite{gomez} no significant loss of coherence  of the wave can be exhibited on the correlation scale $s_c=0$ before $s=1$. Let us note that for a rapidly deccorelating potential $V$, no wave decoherence effects can be observed except for the radiative transfer scaling $s_c=0$ and $s=1$ \cite{bal2}. The reason will be explain formally in Section \ref{sectionint}.  

However, to observe decoherence effects of the field $\phi_\e$ on the spatial correlation scale of order $\e^{s-s_c}$, we need a proper initial condition $\phi_{0,\e}$ in  \eqref{schrodingereq}, which oscillates at the same scale (see Figure \ref{wavefig}). Moreover, a natural way to introduce randomness on the initial condition is as follow. Let $S=\mathbb{R}^d$ and $\mu(\zeta)$ be a nonnegative rapidly decreasing function such that $\|\mu\|_{L^1(\mathbb{R}^d)}=1$, and so that $(\mathbb{R}^d,\mathcal{B}(\mathbb{R}^d),\mu(d\zeta))$ is a probability space. Throughout this paper we assume that the initial condition $\phi_{0,\e}$ in \eqref{schrodingereq} is given by  
\begin{equation}\label{initcond}
\phi_{0,\e}(\xt{x})=\phi_0(\xt{x})\exp(i\zeta\cdot \xt{x}/\e^{s-s_c}).
\end{equation}  
This initial condition represents a plane wave with initial propagation direction $\zeta\in\mathbb{R}^d$, oscillating on the spatial scale $\e^{s-s_c}$, and with amplitude or envelope $\phi_0$. The initial direction $\zeta$ of the wave is distributed according to $\mu(d\zeta)$, so that the Wigner transform \eqref{wignertrans} is average according to the distribution of the initial direction of the wave. Let us note that the spatial frequency of the initial condition ($\sim\e^{-(s-s_c)}$) is low compared to the one of the random medium ($\sim \e^{-s}$) on the macroscopic scale $\e^{-s}$. In rapidly deccorelating random media such low spatial frequency sources do not interact with the random medium, but as we will in Section \ref{sectionint}, this kind of initial conditions interact strongly with slowly decorrelating random media. This result can be useful in passive imaging of a target in slowly deccorelating random media \cite{garnier2}.

\begin{figure}\begin{tabular}{cc}
\includegraphics[scale=0.25]{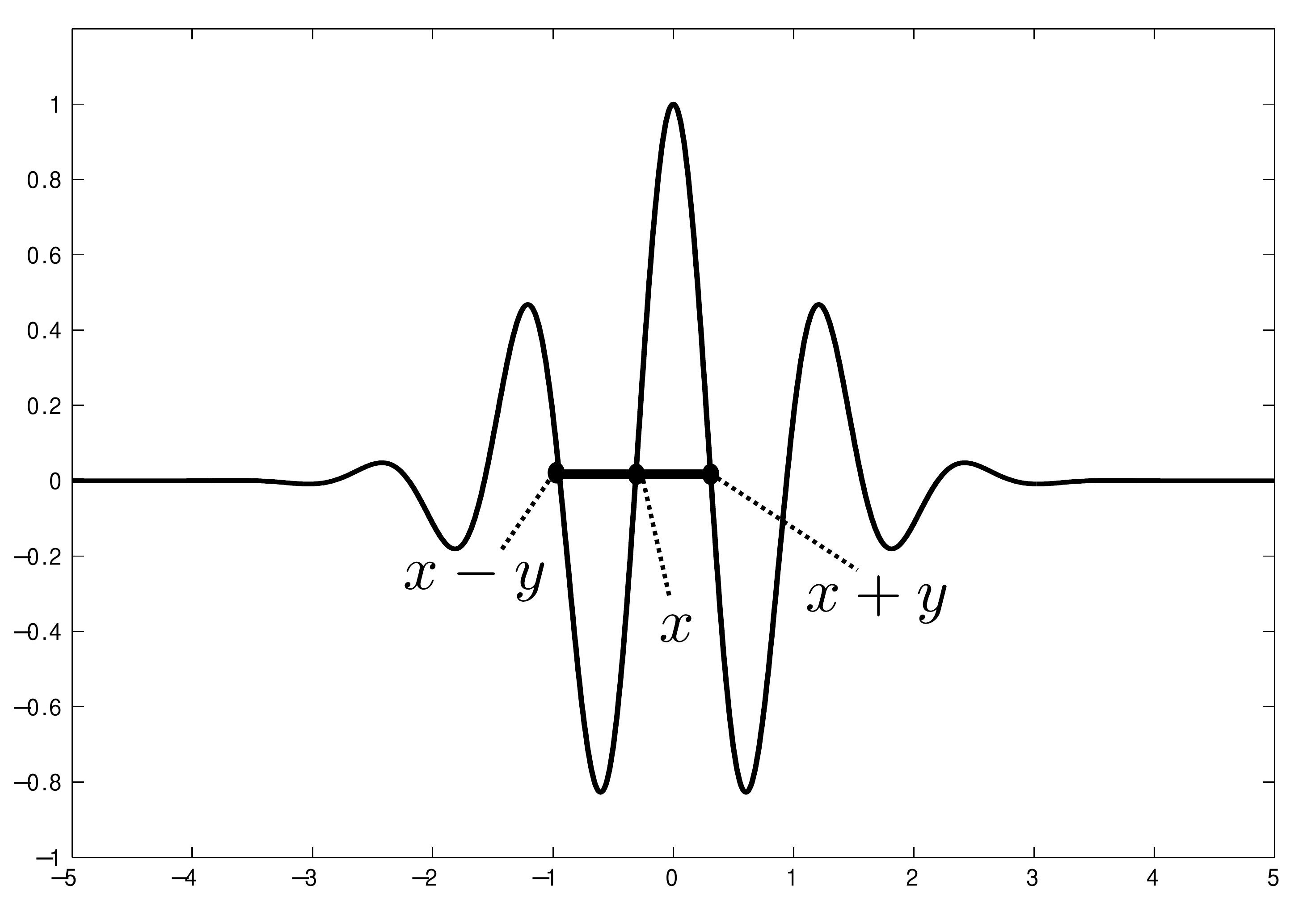}&\includegraphics[scale=0.25]{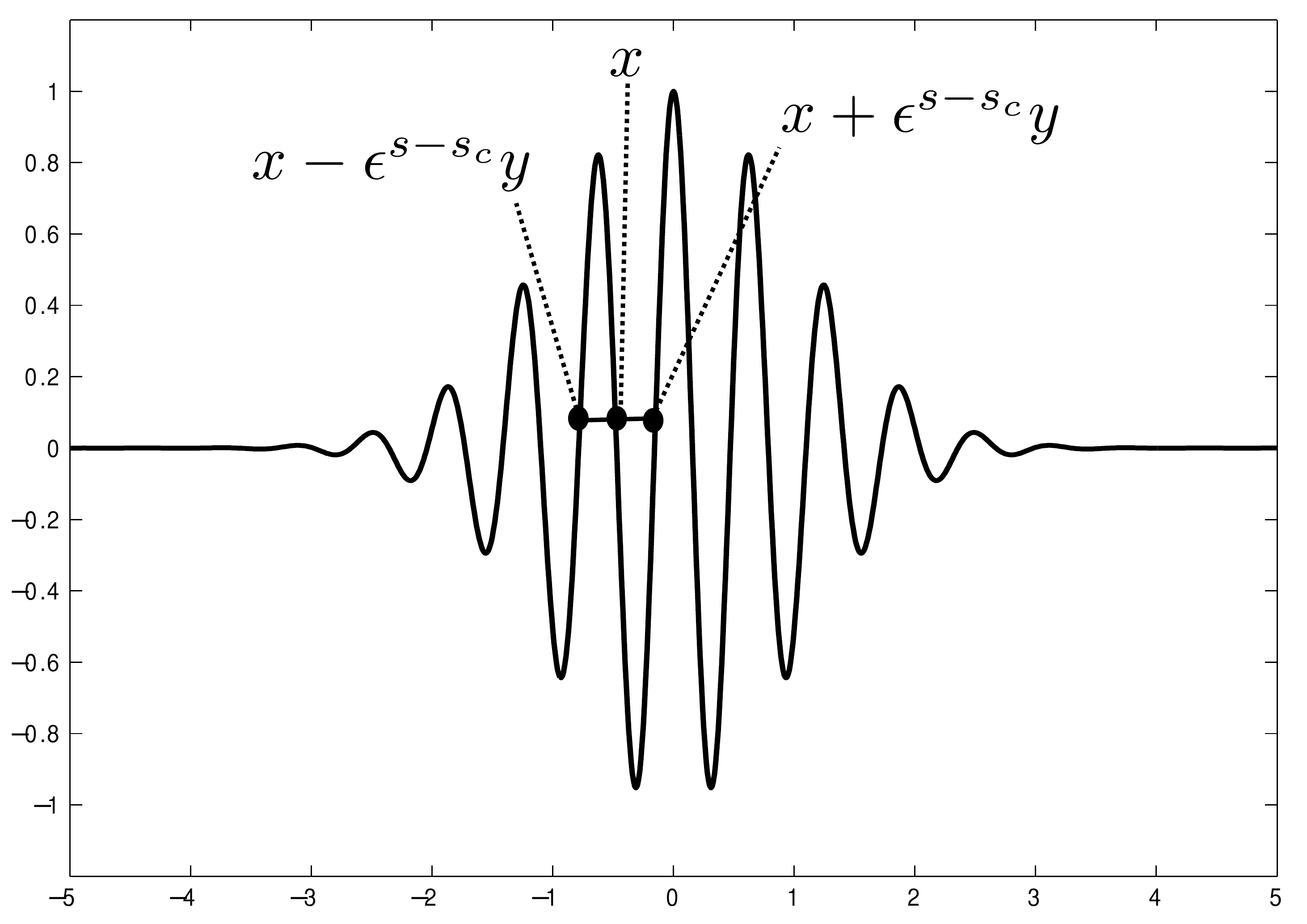}\\
$(a)$&$(b)$\\
\includegraphics[scale=0.25]{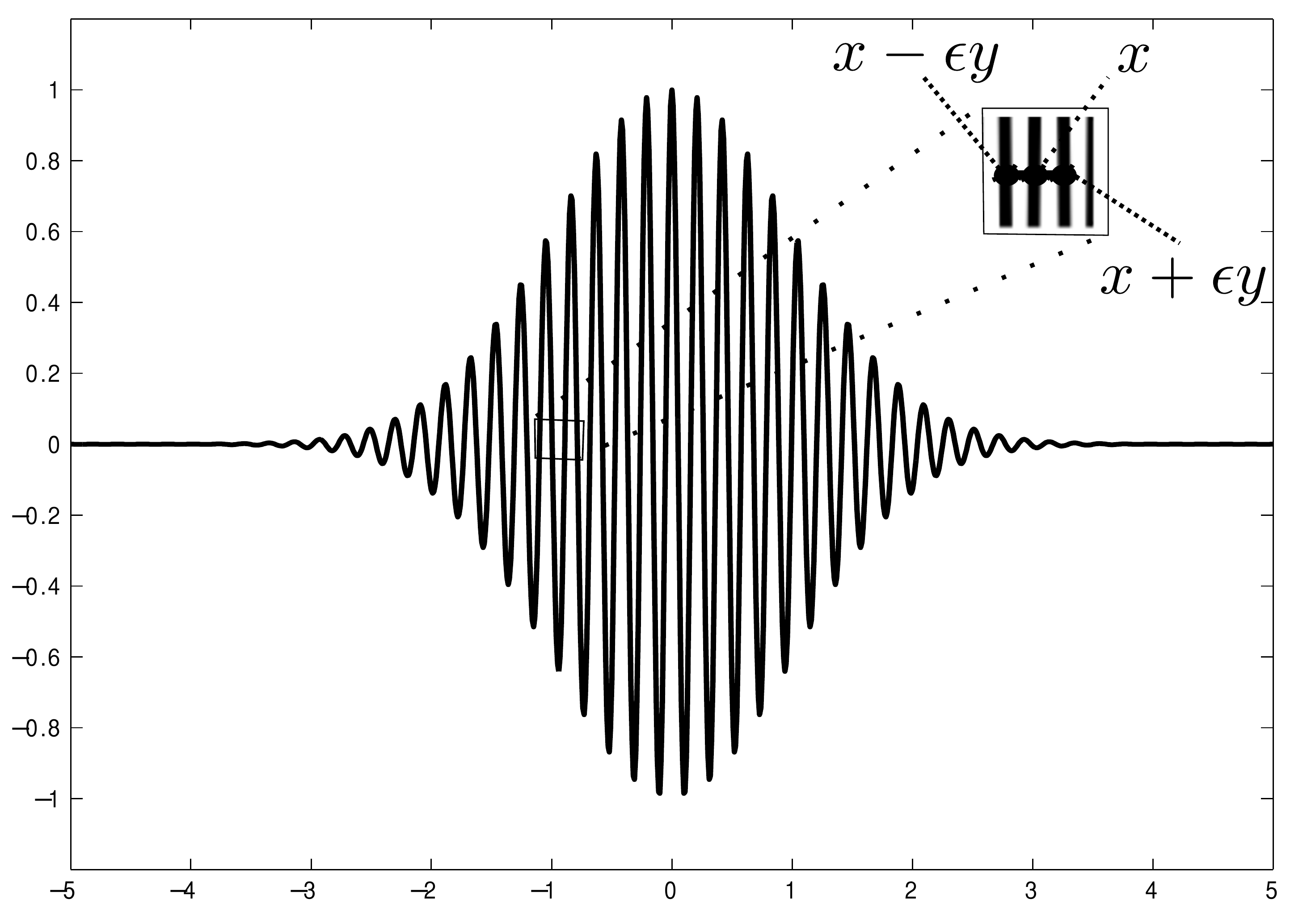}& \\
$(c)$ & 
\end{tabular}
\caption{\label{wavefig} Illustration of the initial condition \eqref{initcond}. $(a)$ and $(b)$ represent low spatial frequency initial conditions compared to the spatial frequency of the random medium $\e^{-s}$ on the macroscopic scale  $\e^{-s}$. $(a)$ represents the case $s_c=s$ and has a spatial frequency of order $1$ on the macroscopic scale $\e^{-s}$ (and a spatial frequency of order $\e^s$ on the microscopic scale). $(b)$ represents the case $s_c<s$ and has a spatial frequency of order $\e^{-(s-s_c)}$ on the macroscopic scale  $\e^{-s}$ (and a spatial frequency of order $\e^{s_c}$ on the microscopic scale). $(c)$ represents the case $s_c=0$ and $s=1$, and has a spatial frequency of order $\e^{-1}$ on the macroscopic scale $\e^{-1}$ (and a spatial frequency of order $1$ on the microscopic scale).}
\end{figure} 

The main reason to introduce this additional randomness through the initial data $\phi_{0,\e}$ is to make possible the weak convergence of the initial Wigner transform $W_{\e}(0)$ in $L^2(\mathbb{R}^{2d})$, which stands for the set of real-valued square-integrable functions equipped with the following inner product
\[\big<\lambda,\mu\big>_{L^2(\mathbb{R}^d)}=\int_{\mathbb{R}^{2d}}d\xt{x}d\xt{k}\,f(\xt{x},\xt{k})g(\xt{x},\xt{k}),\quad \forall (\lambda,\mu)\in L^2(\mathbb{R}^{2d})\times L^2(\mathbb{R}^{2d}).\] 
Consequently, we have  
\[
\forall \lambda\in L^2(\mathbb{R}^{2d}),\quad \lim_\e\big<W_{\e}(0),\lambda\big>_{L^2(\mathbb{R}^{2d})}= \big<W_{0},\lambda\big>_{L^2(\mathbb{R}^{2d})},
\]
where
\begin{equation}\label{wignerinitcond}W_{\e}(0,\xt{x},\xt{k})=W_{0,\e}(\xt{x},\xt{k})=\frac{1}{(2\pi)^d}   \int_{\mathbb{R}^{2d}}d\xt{y}\mu(d\zeta)e^{i(\xt{k}-\zeta)\cdot\xt{y}} \phi_0(\xt{x}-\e^{s-s_c}\xt{y}/2)\overline{\phi_0(\xt{x}+\e^{s-s_c}\xt{y}/2)},\end{equation}
with
\begin{equation}\label{w01} W_{0}(\xt{x},\xt{k})=\lvert \phi_0(\xt{x})\rvert^2 \mu(\xt{k}), \quad \text{if } s_c<s,\end{equation}
 and 
\begin{equation}\label{w02}W_{0}(\xt{x},\xt{k})=\frac{1}{(2\pi)^d}   \int_{\mathbb{R}^{2d}}d\xt{y}\mu(d\zeta)e^{i(\xt{k}-\zeta)\cdot\xt{y}} \phi_0(\xt{x}-\xt{y}/2)\overline{\phi_0(\xt{x}+\xt{y}/2)},\quad \text{if }s=s_c.\end{equation}
Consequently, thanks to the Banach-Steinhaus Theorem, $W_{0,\e}$ is uniformly bounded in $L^2(\mathbb{R}^{2d})$ with respect to $\e$. We need such a convergence on the initial Wigner transform $W_{\e}(0)$ since we study the Wigner transform \eqref{wignertrans} in $W_\e$ in $L^2(\mathbb{R}^{2d})$ equipped with the weak topology. As it will be discussed in Section \ref{sectionint}, it is not possible to expect a convergence result in $L^2(\mathbb{R}^{2d})$ equipped with the strong topology (except for the case $s=s_c$). 

The Wigner distribution \eqref{wignertrans} satisfies the following evolution equation
\begin{equation}\label{wignereq}\begin{split}
\partial_t W_\e(t,\xt{x},\xt{k})&+\e^{s_c}\xt{k}\cdot\nabla_{\xt{x}}W_\e(t,\xt{x},\xt{k})=\\
&\e^{(1-\ga)/2-s}\int_{\mathbb{R}^d}\frac{\widehat{V}\Big(\frac{t}{\e^{s+\gamma}},d\xt{p}\Big)}{(2\pi)^d i} e^{i\xt{p}\cdot\xt{x}/\e^s}\Big( W_\e\Big(t,\xt{x},\xt{k}-\frac{\xt{p}}{2\e^{s_c}}\Big)-W_\e\Big(t,\xt{x},\xt{k}+\frac{\xt{p}}{2\e^{s_c}}\Big)\Big),
\end{split}\end{equation}
with initial conditions $W_\e(0,\xt{x},\xt{k})=W_{0,\e}(\xt{x},\xt{k})$, and where $W_{0,\e}$ is defined by \eqref{wignerinitcond}. The previous equation \eqref{wignereq} can be recast in the weak sense as follows : 
\[
\big<W_\e(t),\lambda\big>_{L^2(\mathbb{R}^{2d})}-\big<W_\e(0),\lambda\big>_{L^2(\mathbb{R}^{2d})}=\int_0^t \big<W_\e(u),\e^{s_c} \xt{k}\cdot\nabla_{\xt{x}}\lambda+\e^{(1-\ga)/2-s}\mathcal{L}_\e\lambda(u)\big>_{L^2(\mathbb{R}^{2d})}du,
\]
 for all $\lambda\in \mathcal{S}(\mathbb{R}^{2d})$, where $\mathcal{S}(\mathbb{R}^{2d})$ is the Schwartz space and stands for the space of rapidly decaying functions. Here
\begin{equation}\label{lepsilon}
\mathcal{L}_\e\lambda(t,\xt{x},\xt{k})=\frac{1}{(2\pi)^d i}\int_{\mathbb{R}^d}\widehat{V}\Big(\frac{t}{\e^{s+\ga}},d\xt{p}\Big) e^{i\xt{p}\cdot\xt{x}/\e^s}\Big( \lambda\Big(\xt{x},\xt{k}-\frac{\xt{p}}{2\e^{s_c}}\Big)-\lambda\Big(\xt{x},\xt{k}+\frac{\xt{p}}{2\e^{s_c}}\Big)\Big).
\end{equation}

Let us assume that $V=0$, so that the Wigner transform is given by $W_\e(t,\xt{x},\xt{k})=W_0(\xt{x}-\e^{s_c}t\xt{k},\xt{k})$. Therefore, the momentum of $W_\e$ is preserved during the propagation, meaning there is no variation of the momentum with respect to time, that is there is no wave decoherence.  The dispersion term $\xt{k}\cdot \nabla_\xt{x}$ is of order $\e^{s_c}$ so that $W_\e$ captures the wave dispersion only for $s_c=0$. The transfer equation \eqref{wignereq}  describes the loss of coherence of the field $\phi_\e$ through the random operator $\mathcal{L}_\e W_\e(t)(t,\xt{x},\xt{k})$. However, depending on the spatial scale of observation the loss of coherence of the wave may not be significant. In fact, according to \cite{gomez} no significant wave decoherence can be exhibited on the correlation scale $s_c=0$ before $s=1$.

Let us introduce some notations which are used in Section \ref{sectionint} and Section \ref{sections1}. Let
\[\mathcal{B}_{r}=\left\{\lambda \in L^2(\mathbb{R}^{2d}), \|\lambda\|_{L^2(\mathbb{R}^{2d})}\leq r\right\},\quad \text{with}\quad r=\sup_{\e}\|W_{0,\e}\|_{L^2(\mathbb{R}^{2d})}<+\infty, \] 
be the closed ball with radius $r$, and $\{g_n, n\geq 1\}$ be a dense subset of $\mathcal{B}_{r}$. We equip  $\mathcal{B}_{r}$ with the distance $d_{\mathcal{B}_{r}}$ defined by
\begin{equation}\label{defmetric}d_{\mathcal{B}_{r}}(\lambda, \mu)=\sum_{j=1}^{+\infty}\frac{1}{2^j}\left\lvert\big<\lambda-\mu,g_n\big>_{L^2(\mathbb{R}^{2d})}\right\rvert,\quad \forall (\lambda,\mu)\in\mathcal{B}_{r}\times\mathcal{B}_{r},\end{equation}
so that $(\mathcal{B}_{r} ,d_{\mathcal{B}_{r}})$ is a compact metric space. Therefore, $(W_\e)_\e$ is a family of process with values in $(\mathcal{B}_{r} ,d_{\mathcal{B}_{r}})$, since $\|W_\e (t)\|_{L^2(\mathbb{R}^{2d})}=\|W_{0,\e}\|_{L^2(\mathbb{R}^{2d})}$. The topology generated by the metric $d_{\mathcal{B}_r}$  is equivalent to the weak topology on $L^2(\mathbb{R}^{2d})$ restricted to $\mathcal{B}_r$.

The three following sections describe in a chronological order the effects produced by the random medium on the wave propagation.

\section{Phase Modulation Scaling $s=1/(2\kappa_\ga)$}\label{sectionsk}

This section describes the first effects caused by the small random fluctuations of the medium on the wave propagation. The following theorem presents the asymptotic behavior of the phase of $\phi_\e$ solution of \eqref{schrodingereq}. Theorem \ref{thphase} has been shown \cite{bal2} in the case $\ga=0$ and $s_c=0$, but nevertheless, its proof remains the same as the one of \cite[Theorem 1.2]{bal2}. We state this result in order to provide a complete and self-contained presentation of the wave propagation in long-range random media.

\begin{thm}\label{thphase}
Let us note 
\[\kappa_0=\frac{\alpha+2\beta-1}{2\beta}\in(1/2,1),\] 
 and 
 \[ \kappa_\ga=\frac{\kappa_0}{1-\ga\Big(\frac{\alpha+\beta-1}{\beta}\Big)}\quad \text{for }\ga\in[0,1),\]
and let us consider the process $\widehat{\zeta}_{\kappa_\ga,\e}(t,\xt{k})$ defined by 
\begin{equation}\label{zeta}  \widehat{\zeta}_{\kappa_\ga,\e}(t,\xt{k})=\frac{1}{\e^{d(s-s_c)}}\widehat{\phi}_{\e}\Big(t,\frac{\xt{k}}{\e^{s-s_c}}\Big)e^{i \lvert \xt{k}\rvert^2 t/(2\e^{s-2s_c})},\quad \text{with}\quad s=1/(2\kappa_\ga),\text{ and }s_c\leq s,\end{equation}
where $\phi_\e$ satisfies \eqref{schrodingereq} with initial data \eqref{initcond}. Under the long-range correlation assumption in time \eqref{SDCA}, where $\alpha+\beta>1$, the process $\widehat{\zeta}_{\kappa_\ga,\e}(t,\xt{k})$  converges in distribution to
\[\widehat{\zeta}(t,\xt{k})=\widehat{\zeta}_0(\xt{k})\exp\Big(i\sqrt{D(\alpha,\beta,\xt{k})}B_{\kappa_0}(t)\Big),\]
 for each $t\geq0$ and $\xt{k}\in\mathbb{R}^d$, where $(B_{\kappa_0}(t))_{t\geq 0}$ is a standard fractional Brownian motion with Hurst index $\kappa_0$, and
\[\widehat{\zeta}_0(\xt{k})=\widehat{\phi}_0(\zeta-\xt{k})\quad\text{if }s_c=s,\quad\text{and}\quad\widehat{\zeta}_0(\xt{k})=\phi_0(0)\delta(\zeta-\xt{k})\quad\text{otherwise}.\]
Moreover,
\[D(\alpha,\beta,\xt{k})=\frac{a(0)}{(2\pi)^d\kappa_0(2\kappa_0-1)}\int_0^{+\infty}d\rho \frac{e^{-\nu\rho}}{\rho^{2\alpha-1}}\int_{\mathbb{S}^{d-1}}dS(u) e^{i\lvert\xt{k}\rvert\rho u\cdot \xt{e}_1}\quad\text{if }\beta=\frac{1}{2},\,\ga=0,\text{ and }s_c=0,\]
and
\[D(\alpha,\beta,\xt{k})=D(\alpha,\beta)=\frac{a(0)\Omega_d}{(2\pi)^d\kappa_0(2\kappa_0-1)}\int_0^{+\infty}d\rho \frac{e^{-\nu\rho^{2\beta}}}{\rho^{2\alpha-1}}\quad \text{otherwise},\]
where, $\Omega_d$ is the surface area of the unit sphere in $\mathbb{R}^d$, and $\xt{e}_1\in \mathbb{S}^{d-1}$.
\end{thm}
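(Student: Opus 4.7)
The plan is to work in the spatial Fourier domain and exploit the fact that, for $s=1/(2\kappa_\ga)$, the dispersion is asymptotically subdominant with respect to the rescaled random potential, so that $\widehat{\zeta}_{\kappa_\ga,\e}$ effectively solves a first-order integral equation driven by a rapidly oscillating Gaussian source. First I would take the Fourier transform of \eqref{schrodingereq} in $\xt{x}$, conjugate by the free propagator $e^{-i\e^s|\xt{k}|^2 t/2}$ to eliminate the dispersive phase, and rescale $\xt{k}\mapsto \xt{k}/\e^{s-s_c}$ according to the definition \eqref{zeta}. This produces a Duhamel equation for $\widehat{\zeta}_{\kappa_\ga,\e}(t,\xt{k})$ whose driving kernel is built from the rescaled spectral measure $\widehat{V}(t/\e^{s+\ga},d\xt{p})$ of Proposition \ref{propmar}, together with oscillatory phases coming from the dispersion relation, and the rescaled initial data \eqref{initcond}.

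Next I would iterate the Duhamel formula to obtain a Dyson series in powers of $V$. Since $V$ is Gaussian, the $n$-th moment of the series reduces via Wick's pairing rule to a sum of diagrams built from the covariance \eqref{rtilde}, and the dominant contribution in the limit $\e\to 0$ is the fully nested \emph{ladder} pairing. Computing this contribution is equivalent to computing the variance of the rescaled random phase
\[\Psi_\e(t,\xt{x})=\e^{(1-\ga)/2-s}\int_0^t V(u/\e^{s+\ga},\xt{x}/\e^s)\,du,\]
that would govern $\phi_\e$ if the Laplacian were dropped in \eqref{schrodingereq}. A direct computation, using that the long-time tail of $R(\tau,\xt{0})$ is of order $|\tau|^{-(1-\alpha)/\beta}$ under the singularity assumption \eqref{SDCAreg}, shows that $\mathbb{E}[\Psi_\e(t,\xt{x})^2]$ converges to $D(\alpha,\beta)\,t^{2\kappa_0}$. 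The exponent $2\kappa_0=2-(1-\alpha)/\beta$ appears precisely because $s=1/(2\kappa_\ga)$ is the unique propagation scale making the powers of $\e$ cancel, which is exactly how $\kappa_\ga$ is defined in the statement.

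Once the limiting variance is identified, Gaussianity upgrades moment convergence to convergence of characteristic functions, hence to convergence in distribution of $\Psi_\e(\cdot,\xt{x})$ to $\sqrt{D(\alpha,\beta)}\,B_{\kappa_0}$ for each fixed $\xt{x}$. Undoing the Fourier and scaling transformations then produces the two regimes in the statement: for $s_c=s$ the rescaling preserves the full initial profile $\widehat{\phi}_0(\zeta-\xt{k})$, whereas for $s_c<s$ the initial Fourier content concentrates at the frequency $\zeta$ and yields $\phi_0(0)\delta(\zeta-\xt{k})$. The main obstacle is the control of the non-ladder diagrams in the Dyson expansion: the long-range singularity of $\widehat{R}_0$ at $\xt{p}=\xt{0}$ has to be tamed by the oscillations of the dispersion phase, which requires a uniform-in-$\e$ stationary-phase argument together with tightness of the tail of the series. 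The borderline case $\beta=1/2$, $\ga=0$, $s_c=0$ is special because the stationary-phase directions then align with the unit sphere $\mathbb{S}^{d-1}$, which explains the additional angular integral appearing in the formula for $D(\alpha,\beta,\xt{k})$.
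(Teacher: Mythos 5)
A preliminary remark: the paper itself does not prove Theorem \ref{thphase}; it states that the proof is the same as that of \cite[Theorem 1.2]{bal2}, so your proposal has to be measured against that Duhamel-expansion/moment argument. Your scaling bookkeeping is correct and reproduces the theorem's constants: $R(\tau,\xt{0})\sim C\lvert\tau\rvert^{-(1-\alpha)/\beta}$, the double time integral gives $t^{2\kappa_0}/(\kappa_0(2\kappa_0-1))$ with $2\kappa_0=2-(1-\alpha)/\beta$, the powers of $\e$ cancel exactly at $s=1/(2\kappa_\ga)$, and the dichotomy for $\widehat{\zeta}_0$ according to $s_c=s$ or $s_c<s$ is right. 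However, the diagrammatic mechanism you invoke is wrong for this regime, and this is a genuine gap. The limit is the exponential of a Gaussian: $\E[\widehat{\zeta}_{\kappa_\ga,\e}(t,\xt{k})]$ must converge to $\widehat{\zeta}_0(\xt{k})e^{-D t^{2\kappa_0}/2}$, and the $2n$-th order term of this exponential, $(\E[\Psi^2])^n/(2^n n!)$, is precisely the sum of the contributions of \emph{all} $(2n-1)!!$ Wick pairings, each contributing equally in the limit because the kinetic phases are negligible for the dominant low modes. Ladder/nested dominance is the signature of the opposite, Markovian regime $s=1$, $s_c=0$, where crossed pairings are killed by fast phases and one obtains a transport equation; here, if only ladder pairings survived, the combinatorial weight would be Catalan-type rather than $(2n-1)!!$ and you would not recover the moments of $\exp(i\sqrt{D}B_{\kappa_0}(t))$. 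So either you keep the full pairing combinatorics, or you avoid diagrams altogether by proving the solution is close to the frozen-phase approximation and then invoking Gaussianity of the phase.

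Second, the reduction to the frozen-point phase $\Psi_\e(t,\xt{x})$ cannot be correct as stated, because it yields a $\xt{k}$-independent variance $D(\alpha,\beta)t^{2\kappa_0}$ in every case, contradicting the $\xt{k}$-dependent coefficient $D(\alpha,\beta,\xt{k})$ in the case $\beta=1/2$, $\ga=0$, $s_c=0$ that you acknowledge; your last sentence is inconsistent with your own reduction. The correct reduced object is the potential integrated along the free characteristics: in the second moment the covariance \eqref{rtilde} is multiplied by the transport phase $\exp(i\xt{k}\cdot\xt{p}(u-v)/\e^{s-s_c})$, and for the dominant modes $\lvert\xt{p}\rvert\sim\e^{(s+\ga)/(2\beta)}$ (those decorrelating on $O(1)$ macroscopic times) this phase has size $\e^{(s+\ga)/(2\beta)-s+s_c}$; it vanishes when $\beta<1/2$ or $\ga>0$ or $s_c>0$, and is $O(1)$ exactly when $\beta=1/2$, $\ga=0$, $s_c=0$, which is what produces the angular factor $\int_{\mathbb{S}^{d-1}}dS(u)e^{i\lvert\xt{k}\rvert\rho u\cdot\xt{e}_1}$. (By contrast the kinetic phase $\lvert\xt{p}\rvert^2(u-v)/(2\e^s)$ is $O(\e^{(s+\ga)/\beta-s})$ and always negligible, so dropping the Laplacian is legitimate in all cases.) Finally, two steps that constitute the technical heart of \cite{bal2} are missing: (i) why a \emph{single} fractional Brownian motion, independent of the spatial variable, multiplies the whole profile --- one must show that the dominant potential modes have correlation length large compared with the spatial spread of the data, so the random phase factors out of the Fourier integral defining $\widehat{\zeta}_{\kappa_\ga,\e}$; and (ii) uniform-in-$\e$ control and summability of the remainder of the Duhamel series, which you flag as an obstacle but do not resolve.
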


In Theorem \ref{thphase}, $\widehat{\zeta}_0$ represents the initial direction of the wave, and $\exp(i\sqrt{D(\alpha,\beta,\xt{k})}B_{\kappa_0}(t))$ represents the random phase modulation induces by the slowly decorrelating perturbations of the medium. The scaling $\xt{k}/\e^{s-s_c}$ in \eqref{zeta} corresponds to the order of the spacial frequency of the initial condition \eqref{initcond}. The order of the spacial frequency of the initial condition does not play a significant role in Theorem \ref{thphase} but it will in Section \ref{sectionint} and Section \ref{sections1}. In fact in these sections the order of the spacial frequency of the initial condition \eqref{initcond} plays a significant role and is linked to the correlation scale parameter $s_c$ of the Wigner transform \eqref{wignertrans} to exhibit the loss of coherence. The order of the spacial frequency is taken into account in Theorem \ref{thphase} in order to give a complete picture of the wave propagation phenomena (see Figure \ref{schema}).

As a result, in long-range random media macroscopic effects may happen on the field $\phi_\e$ at a shorter scale $s=1/(2\kappa_\ga)<1$ without induced loss of coherence of the field $\phi_\e$. These effects are just a phase modulation given by a fractional Brownian motion with Hurst index $\kappa_0$, which depends on the statistic properties of the random potential $V$. However, let us note that the propagation scale parameter $s=1/(2\kappa_\ga)<1$ is "universal" in the sense that a random phase modulation appears on the wave whatever the order of the low-frequency initial condition characterized by $s_c$, that is for all $s_c\in [0, s=1/(2\kappa_\ga)]$.

\cite{bal2} is the first paper showing a qualitative difference between the random effects induced on a wave propagating in long range and in rapidly decorrelating random media in time $\eqref{SDCAcor}$, for propagation media of dimension strictly greater than $1$. In fact, it has been shown in \cite{bal2} that the field $\phi_\e$ propagating in a rapidly decorrelating medium does not evolve before the scale $s=1$, more precisely the phase and the phase space energy evolve at the same propagation scale $\e^{-1}$ ($s=1$), so that no significant wave decoherence can be observed before this propagation scale. In rapidly decorrelating random media the scale $s=1$ is "universal", in the sense that it does not depend on the statistic of the random potential $V$. 

Theorem \ref{thphase} shows that the phase of the wave field $\phi_\e$ exhibits non trivial stochastic phenomena for $s=1/(2\kappa_\ga)$. As a result, we should expect for larger propagation scale parameters $s>1/(2\kappa_\ga)$  that the random oscillations  of the wave field $\phi_\e$ will evolve faster and faster up to break the wave coherence. We show in Section \ref{sectionint} and Section \ref{sections1} that the loss of coherence appears first on the large spatial correlation scales, and then as $s$ increase, it is transmitted to smaller spatial correlation scales (see Figure \ref{schema}).

\section{Wave Decoherence for $s\in(1/(2\kappa_\ga),1)$}\label{sectionint}

This section presents the main result of this paper, it describes the loss of coherence of the wave field $\phi_\e$ solution of \eqref{schrodingereq} occurring after the onset of the random phase modulation described in Section \ref{sectionsk}. On propagation scales $\e^{-s}$ with $s>1/(2\kappa_\ga)$ but strictly less than $1$, the random phase modulation described in the asymptotic $\e\to 0$ in the Section \ref{sectionsk} begins to oscillate very fast up to break the wave coherence, and produce momentum diffusion effect. However, according to  \cite{gomez}  the wave decoherence does not take place for $s_c=0$. To study this diffusion phenomenon we need to consider the scaled Wigner transform \eqref{wignertrans} to capture the wave decoherence for spatial correlation parameters $s_c>0$. Using the notation introduced in Section \ref{wignersection}, we have the following results. 

In Theorem \ref{thasymptotic}, Theorem \ref{thasymptotic3}, and Theorem \ref{thasymptotic2}, we show that the good spatial correlation parameter to observe the loss of coherence is 
\[s_c=(1-s)/\theta,\] 
so that for $s_c\leq s$, we have $s\geq 1/(1+\theta)\geq1/(2\kappa_\ga)$, where $\theta$ is defined by \eqref{theta} and $\kappa_\ga$ in Theorem \ref{thphase}. As a result, no decoherence effect can be observed before the propagation scale $s=1/(1+\theta)$ for any spatial correlation parameter $s_c\leq s$.  Theorem \ref{thasymptotic} below deals with the case $s>1/(1+\theta)$ for which one can describe the wave decoherence in term of a fractional diffusion, while Theorem \ref{thasymptotic3} deals with the critical case $s_c=s=1/(1+\theta)$.

\begin{thm}\label{thasymptotic}
Let us assume that \eqref{SDCA} holds. For $s\in(1/(2\kappa_\ga),1)$, and 
\begin{equation}\label{sc}s_c=\frac{1-s}{\theta}<s,\end{equation}
where $\theta\in(0,1)$ is defined by \eqref{theta}, the family of scaled Wigner transform $(W_\e)_{\e\in(0,1)}$ defined by \eqref{wignertrans} and solution of the transport equation \eqref{wignereq}, converges in probability on 
$\mathcal{C}([0,+\infty), (\mathcal{B}_{r} ,d_{\mathcal{B}_{r}}))$ as $\e \to 0$ to a limit denoted by $W$. More precisely, for all $T>0$ and for all $\eta>0$,
\[\lim_{\e\to 0}\Pro\left(\sup_{t\in[0,T]}d_{\mathcal{B}_r}(W_\e(t),W(t))>\eta\right)=0,\]
where $W$ is the unique solution uniformly bounded in $L^2(\mathbb{R}^{2d})$ of the fractional diffusion equation
\begin{equation}\label{radtranseq}
\partial_t W=-\sigma(\theta)(-\Delta_\xt{k})^{\theta/2}W,
\end{equation} 
with $W(0,\xt{x},\xt{k})=W_0(\xt{x},\xt{k})\in L^2(\mathbb{R}^{2d})$ defined by \eqref{w01}. 
Here, $(-\Delta_\xt{k})^{\theta/2}$ is the fractional Laplacian with Hurst index $\theta\in(0,1)$, and
\[\sigma(\theta)=\frac{2a(0) \theta\Gamma(1-\theta)}{(2\pi)^d}\int_{\mathbb{S}^{d-1}}dS(\xt{u})\lvert\xt{e}_1\cdot \xt{u} \rvert^\theta\]
with $\xt{e}_1\in \mathbb{S}^{d-1}$ and $\Gamma(z)=\int_0^{+\infty} t^{1-z}e^{-t}dt$.
Moreover, $W$ is given by the following formula
\begin{equation}\label{formula}W(t,\xt{x},\xt{k})=\frac{1}{(2\pi)^d}\int d\xt{q} \exp(i\xt{k}\cdot\xt{q}-\sigma(\theta) \lvert \xt{q} \rvert^\theta t)\widehat{W}^{\xt{k}}_0(\xt{x},\xt{q}),\end{equation}
where $\widehat{W}^{\xt{k}}_0$ stands for the Fourier transform of $W_0$ with respect to the variable $\xt{k}$.
\end{thm}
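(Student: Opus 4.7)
The plan is to apply the Papanicolaou--Stroock--Varadhan perturbed-test-function method to the jointly Markov process $(W_\e(t),\widehat{V}(t/\e^{s+\ga}))$, combined with a tightness argument in the compact Polish space $\mathcal{C}([0,T];(\mathcal{B}_r,d_{\mathcal{B}_r}))$ and a uniqueness argument for the limit equation. Tightness of $(W_\e)$ reduces, by the compactness of $\mathcal{B}_r$ and an Aldous-type criterion, to an equicontinuity estimate for the scalar processes $t\mapsto \langle W_\e(t),g_n\rangle_{L^2(\mathbb{R}^{2d})}$ along the countable dense family underlying the metric \eqref{defmetric}; this follows from the weak form of \eqref{wignereq}, the isometry $\|W_\e(t)\|_{L^2}=\|W_{0,\e}\|_{L^2}\leq r$, and the almost-sure local boundedness of $\widehat{V}$. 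Uniqueness of the $L^2$-bounded weak solution of \eqref{radtranseq} is immediate from a Fourier transform in $\xt{k}$, which decouples the equation into scalar ODEs and yields the closed formula \eqref{formula}. Since the identified limit is deterministic, convergence in distribution automatically upgrades to convergence in probability.

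For $f\in\mathcal{C}^\infty_b(\mathbb{R})$ and $\lambda\in\mathcal{S}(\mathbb{R}^{2d})$, set $\pi(W)=f(\langle W,\lambda\rangle_{L^2(\mathbb{R}^{2d})})$. Applying the joint generator to $\pi$ produces a singular $\e^{(1-\ga)/2-s}$ contribution coming from the random potential term in \eqref{wignereq}. Exploiting the Ornstein--Uhlenbeck decay kernel $e^{-\mathfrak{g}(\xt{p})h}$ of Proposition \ref{propmar}, I introduce the first corrector
\[ \pi_1^\e \;=\; \e^{(1-\ga)/2-s}\,f'(\langle W,\lambda\rangle)\int_0^{+\infty}\big\langle W,\mathcal{L}^{(v)}_\e\lambda\big\rangle_{L^2(\mathbb{R}^{2d})}\,dv, \]
where $\mathcal{L}^{(v)}_\e$ denotes \eqref{lepsilon} with $\widehat{V}(t/\e^{s+\ga},d\xt{p})$ replaced by $e^{-\mathfrak{g}(\xt{p})v/\e^{s+\ga}}\widehat{V}(t/\e^{s+\ga},d\xt{p})$; identity \eqref{markovesp} ensures that the fast generator of $\widehat V$ applied to $\pi_1^\e$ exactly cancels the singular drift. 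Computing the remaining piece produced by the Wigner evolution of $W$, using the conditional covariance \eqref{markovvar} together with $\int_0^{+\infty}e^{-\mathfrak{g}(\xt{p})v/\e^{s+\ga}}dv=\e^{s+\ga}/\mathfrak{g}(\xt{p})$, and finally performing the change of variables $\xt{p}=\e^{s_c}\xt{q}$, the dominant contribution becomes
\[ \e^{1-s-s_c\theta}\,f'(\langle W,\lambda\rangle)\int d\xt{q}\,\frac{a(0)}{|\xt{q}|^{d+\theta}}\,\big\langle W,\mathcal{D}_\xt{q}\lambda\big\rangle_{L^2(\mathbb{R}^{2d})}\;+\;\text{remainders}, \]
where $\mathcal{D}_\xt{q}\lambda(\xt{x},\xt{k})$ gathers the symmetric $\xt{k}$-shifts of $\lambda$ by $\pm\xt{q}/2$ produced by the two applications of $\mathcal{L}_\e$. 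The exponent $1-s-s_c\theta$ vanishes exactly when $s_c=(1-s)/\theta$, which is the critical balance of the theorem; a direct computation identifies $\mathcal{D}_\xt{q}$ integrated against $a(0)/|\xt{q}|^{d+\theta}$ with the singular integral representation of the fractional Laplacian, producing the operator $-\sigma(\theta)(-\Delta_\xt{k})^{\theta/2}$ with the stated constant. The transport term $\e^{s_c}\xt{k}\cdot\nabla_\xt{x}\lambda$ drops out because $s_c>0$, which is why no $\xt{x}$-drift appears in \eqref{radtranseq}.

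The main obstacle is the error control. Besides the dominant drift above, $\mathcal{A}_\e\pi_1^\e$ generates centered fluctuations of the quadratic form in $\widehat V$, which must be absorbed by a further corrector $\pi_2^\e$, together with $f''$ chain-rule pieces that must be shown to vanish uniformly on $[0,T]$ in the self-averaging regime. The delicacy is that the singular kernel $\widehat R_0(\xt{p})/\mathfrak{g}(\xt{p})\sim a(0)/|\xt{p}|^{d+\theta}$ is not integrable at $\xt{p}=0$ and is tamed only by the symmetric cancellation in the shifts $\lambda(\xt{k}\pm\xt{p}/(2\e^{s_c}))$. I therefore split the $\xt{p}$-integration into a region $|\xt{p}|\lesssim\e^{s_c}$, where a second-order Taylor expansion of $\lambda$ produces a compensating factor $|\xt{p}|^2/\e^{2s_c}$ that offsets the singularity, and a region $|\xt{p}|\gtrsim\e^{s_c}$, where the Lévy-type integrand is controlled directly using $\theta<1$, the rapid decay of $a$ at infinity, and the uniform $L^2$-bound on $W_\e$. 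The constraints $s\in(1/(2\kappa_\ga),1)$ together with $s_c=(1-s)/\theta<s$ are exactly what simultaneously guarantee a finite, nondegenerate limit generator and the vanishing of all remainder terms, thereby identifying the limit $W$ as the unique $L^2$-bounded weak solution of \eqref{radtranseq}.
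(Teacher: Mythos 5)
Your identification step follows the paper's own route almost exactly: the pseudogenerator/perturbed-test-function scheme with a first corrector built from the Ornstein--Uhlenbeck resolvent of Proposition \ref{propmar}, a second corrector absorbing the centered quadratic fluctuations, the change of variables $\xt{p}=\e^{s_c}\xt{q}$ producing the exponent $1-s-\theta s_c$ whose vanishing forces $s_c=(1-s)/\theta$, identification of the limit generator with $-\sigma(\theta)(-\Delta_{\xt{k}})^{\theta/2}$, uniqueness via Fourier transform in $\xt{k}$ (the paper phrases this as a duality argument with the time-reversed strong solution, which is the same computation), and the automatic upgrade from convergence in law to convergence in probability because the limit is deterministic. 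However, there is a genuine gap in your tightness argument. You claim equicontinuity of $t\mapsto\big<W_\e(t),g_n\big>_{L^2(\mathbb{R}^{2d})}$ "follows from the weak form of \eqref{wignereq}, the isometry, and the almost-sure local boundedness of $\widehat{V}$". This cannot work: the weak form carries the factor $\e^{(1-\ga)/2-s}$ in front of the random term, and since $s>1/(2\kappa_\ga)\geq(1-\ga)/2$ this factor diverges as $\e\to 0$, while $\|\mathcal{L}_\e\lambda(t)\|_{L^2(\mathbb{R}^{2d})}$ does not decay fast enough to compensate (the paper notes explicitly that $\mathcal{A}^\e f_0^\e$ in \eqref{Aef0} blows up since $s>1/2$). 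This divergence is the whole difficulty of the problem; a naive modulus-of-continuity bound is impossible. In the paper, tightness is itself obtained through the perturbed-test-function machinery: one applies Kushner's tightness theorem to $f_0^\e+f_1^\e$, which requires proving that the corrector is uniformly small in probability ($\sup_{t\le T}\lvert f_1^\e(t)\rvert\to 0$, Lemma \ref{bound2}, established via nontrivial Gaussian supremum/metric-entropy estimates in the style of Adler--Taylor) and that $\mathcal{A}^\e(f_0^\e+f_1^\e)$ is uniformly integrable (Lemma \ref{A1}). Your proposal builds exactly the tools needed for this, but as written it routes tightness through an estimate that fails.

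Two secondary points. First, your corrector uses only the kernel $e^{-\mathfrak{g}(\xt{p})v/\e^{s+\ga}}$, i.e. after integration the factor $\e^{s+\ga}/\mathfrak{g}(\xt{p})$; the paper's corrector additionally carries the free-transport phase $e^{i(u-t)\xt{p}\cdot\xt{k}/\e^s}$, producing the resolvent denominator $\mathfrak{g}(\xt{p})-i\e^{\ga}\xt{k}\cdot\xt{p}$ in $\mathcal{L}_{1,\e}$. Without that phase, the pairing of the transport part of the dynamics with the $\xt{x}$-phase $e^{i\xt{p}\cdot\xt{x}/\e^s}$ inside the corrector generates an extra uncancelled term that you would still have to estimate; with the paper's choice it is absorbed automatically. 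Second, you assert that the $f''$ chain-rule pieces vanish "in the self-averaging regime" but do not name the mechanism: after rescaling, the quadratic term $G_{2,\e}$ retains the fast spatial phases $e^{i\xt{p}\cdot(\xt{x}_1-\xt{x}_2)/\e^{s-s_c}}$, and it is the Riemann--Lebesgue lemma together with the strict inequality $s_c<s$ that kills it. This is not a technicality: it is precisely what makes the limit deterministic here, and it is what fails in the critical case $s_c=s$ of Theorem \ref{thasymptotic3}, where the quadratic term survives and the limit is a genuinely random field.
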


Let us note that we cannot expect a convergence on $L^2(\mathbb{R}^{2d})$ equipped with the strong topology. In fact, the following conservation relation $\|W_\e(t)\|_{L^2(\mathbb{R}^{2d})}=\|W_{0,\e}\|_{L^2(\mathbb{R}^{2d})}$ is not true anymore for the limit $W$. Moreover, let us note that the Wigner distribution $W_\e$ is self-averaging as $\e$ goes to $0$, that is the limit $W$ is not random anymore. This self-averaging phenomenon of the Wigner distribution has already been observed in several studies \cite{bal3, bal,bal4,gomez} for $s=1$, and is very useful for applications. The proof of Theorem \ref{thasymptotic} is given in Section \ref{proof} and is based on an asymptotic analysis using perturbed-test-function and martingale techniques.

Equation \eqref{radtranseq} describes the loss of coherence of the field $\phi_\e$ for the particular spatial correlation parameter $s_c$ defined by \eqref{radtranseq} through a momentum diffusion. This fractional diffusion exhibits a damping term obeying to a power law with exponent $\theta\in(0,1)$ describing the decoherence rate of the field wave $\phi_\e$. An important point is that the wave decoherence mechanism is deterministic, it does not depend on the particular realization of the random medium. Finally, let us note that $W$ does not evolves in $\xt{x}$. In fact, in \eqref{wignereq} the dispersion term $\xt{k}\cdot \nabla_\xt{x}$ is of order $\e^{s_c}$. As a result, for $s_c>0$ the dispersion is not captured by our scaled Wigner transform \eqref{wignertrans} and is small compared to the momentum diffusion mechanism. The dispersion effect can be captured by the scaled Wigner transform only when $s_c=0$ (see Section \ref{sections1}).

The following theorem investigates the special case $s_c=s=1/(1+\theta)$, with either $\ga>0$ or $\beta<1/2$. This special case studies the decoherence of the wave envelop $\phi_0$ itself and not the local loss of coherence of the initial condition (see \eqref{initcond}). The case $\ga=0$ and $\beta=1/2$, and $s=s_c$ has been addressed in Theorem \ref{thphase}, since in this particular case $1/(1+\theta)=1/(2\kappa_0)$. Therefore, in this case there is no wave decoherence.

\begin{thm}\label{thasymptotic3}

Let us assume that \eqref{SDCA} holds. For either $\ga>0$ or $\beta<1/2$, and
\[s_c=s=\frac{1}{1+\theta},\]
where $\theta\in(0,1)$ is defined by \eqref{theta}, the family of scaled Wigner transform $(W_\e)_{\e\in(0,1)}$ defined by \eqref{wignertrans} and solution of the transport equation \eqref{wignereq}, converges in distribution on 
$\mathcal{C}([0,+\infty), L^2(\mathbb{R}^{2d}))$ as $\e \to 0$ to a limit $W$ defined by
\[W(t,\xt{x},\xt{k})=\frac{1}{(2\pi)^d}\int d\xt{q} \widehat{W}^\xt{k}_0(\xt{x},\xt{q})\exp\Big(i\xt{k}\cdot\xt{q}+i \int \mathcal{B}_t(d\xt{p})e^{i\xt{p}\cdot\xt{x}}( e^{-i\xt{q}\cdot \xt{p}/2} - e^{i\xt{q}\cdot \xt{p}/2} ) \Big).\]
$W$ is the unique weak solution of the stochastic differential equation
\begin{equation}\label{radtranseq2}\begin{split}
d W(t,\xt{x},\xt{k})=&\,-\sigma(\theta)(-\Delta_\xt{k})^{\theta/2}W(t,\xt{x},\xt{k})\\
&\,+\frac{2ia(0)}{(2\pi)^{d}}\int \mathcal{B}_t(d\xt{p})e^{i\xt{x}\cdot\xt{p}}\Big(W\big(t,\xt{x},\xt{k}-\frac{\xt{p}}{2})-W\big(t,\xt{x},\xt{k}+\frac{\xt{p}}{2})\Big),
\end{split}\end{equation} 
with $W(0,\xt{x},\xt{k})=W_0(\xt{x},\xt{k})\in L^2(\mathbb{R}^{2d})$ defined by \eqref{w02}. 
Here, $(\mathcal{B}_t)_t$ is a real Brownian motion on $\mathcal{H}'_\theta$ the dual space of
\[\mathcal{H}_\theta=\Big\{ \varphi \text{ such that}\quad\varphi(\xt{p})=\overline{\varphi(-\xt{p})}\quad\text{and}\quad  \int\frac{d\xt{p}}{\lvert \xt{p}\rvert ^{d+\theta}}\lvert \varphi(\xt{p})\rvert^2 <+\infty   \Big\},\]
with covariance function
\[ \E\big[ \mathcal{B}_t(\varphi)\mathcal{B}_s(\psi)  \big] =s\wedge t \int\frac{d\xt{p}}{\lvert \xt{p} \rvert^{d+\theta}} \varphi(\xt{p})\overline{\psi(\xt{p})},\quad \forall(\varphi,\psi)\in\mathcal{H}_\theta\times\mathcal{H}_\theta.\]
Moreover, $(-\Delta_\xt{k})^{\theta/2}$ is the fractional Laplacian with Hurst index $\theta\in(0,1)$, and
\[\sigma(\theta)=\frac{2a(0) \theta\Gamma(1-\theta)}{(2\pi)^d}\int_{\mathbb{S}^{d-1}}dS(\xt{u})\lvert\xt{e}_1\cdot \xt{u} \rvert^\theta\]
with $\xt{e}_1\in \mathbb{S}^{d-1}$ and $\Gamma(z)=\int_0^{+\infty} t^{1-z}e^{-t}dt$.
\end{thm}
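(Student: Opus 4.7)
The plan is to adapt the perturbed-test-function and martingale argument used for Theorem \ref{thasymptotic} to the critical balance $s_c = s = 1/(1+\theta)$. For $\lambda \in \mathcal{S}(\mathbb{R}^{2d})$, starting from the weak formulation of \eqref{wignereq}, I would introduce a first corrector
\begin{equation*}
\lambda_\e^{(1)}(\xt{x},\xt{k},t) = \e^{(1-\ga)/2-s}\int \widehat{V}\Big(\frac{t}{\e^{s+\ga}}, d\xt{p}\Big)\, \Psi_\e(\xt{x}, \xt{k}, \xt{p}),
\end{equation*}
with $\Psi_\e$ chosen via Proposition \ref{propmar} so as to cancel the leading $O(\e^{-(1-\ga)/2+s})$ term in the evolution, together with a second-order iterated-stochastic-integral corrector $\lambda_\e^{(2)}$ extracting the limit drift. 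The output is a semimartingale decomposition
\begin{equation*}
\langle W_\e(t), \lambda+\lambda_\e^{(1)}+\lambda_\e^{(2)}\rangle_{L^2(\mathbb{R}^{2d})} = M_\e(t) + \int_0^t A_\e(W_\e(u))\, du + o(1).
\end{equation*}

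The drift $A_\e$ converges to $-\sigma(\theta)(-\Delta_\xt{k})^{\theta/2}$ exactly as in the proof of Theorem \ref{thasymptotic}: after the change of variables $\xt{p}\mapsto\e^{s_c}\xt{p}$, the singular measure $\widehat{R}_0/\mathfrak{g}\sim a(0)/\lvert\xt{p}\rvert^{d+\theta}$ combined with the symmetric difference $\lambda(\xt{k}-\xt{p}/(2\e^{s_c}))-\lambda(\xt{k}+\xt{p}/(2\e^{s_c}))$ reconstructs the fractional Laplacian with the announced constant $\sigma(\theta)$. The genuinely new phenomenon, specific to $s_c=s$, is the surviving noise: the same rescaling $\xt{p}\mapsto\e^s\xt{p}$ converts the fast spatial phase $e^{i\xt{p}\cdot\xt{x}/\e^s}$ into $e^{i\xt{p}\cdot\xt{x}}$, which is of order one and no longer averages in $\xt{x}$. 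Computing the predictable quadratic variation of $M_\e$ via \eqref{markovvar} then identifies, in the limit, precisely the $\mathcal{H}'_\theta$-valued Brownian motion $\mathcal{B}_t$ appearing in \eqref{radtranseq2}. The exclusion of $\ga=0,\beta=1/2$ is natural: in that regime $1/(1+\theta)=1/(2\kappa_0)$, so $s$ would coincide with the phase-modulation scale of Theorem \ref{thphase} and no additional diffusion is available yet.

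Tightness of $(W_\e)_\e$ in $\mathcal{C}([0,+\infty), L^2(\mathbb{R}^{2d}))$ follows from the conservation $\|W_\e(t)\|_{L^2}=\|W_{0,\e}\|_{L^2}$ together with equicontinuity estimates obtained by testing \eqref{wignereq} against smooth cutoffs. Uniqueness (in law) for \eqref{radtranseq2} is best shown after Fourier transform in $\xt{k}$, where the equation becomes, for each fixed $(\xt{x},\xt{q})$, a linear multiplicative-noise SDE driven by $\mathcal{B}_t$; the explicit formula given in the statement can then be verified via It\^o's formula applied to the complex exponential and is manifestly the unique weak solution. The main obstacle is the critical nature of the scaling: unlike in Theorem \ref{thasymptotic} where $s_c<s$ leaves room for the oscillatory phase to average out, the corrector integrals here are borderline divergent against $\widehat{R}_0/\mathfrak{g}$, and only the symmetric difference $e^{-i\xt{q}\cdot\xt{p}/2}-e^{i\xt{q}\cdot\xt{p}/2}$ renders them finite. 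Retaining both the conditional mean \eqref{markovesp} and variance \eqref{markovvar} of Proposition \ref{propmar} at the next order is therefore essential to separate the drift and diffusion contributions cleanly in the limit and to rule out spurious cross terms that would otherwise pollute either $\sigma(\theta)$ or the covariance of $\mathcal{B}_t$.
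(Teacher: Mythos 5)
Your core mechanism is the same as the paper's: the same perturbed-test-function machinery (first corrector built from \eqref{markovesp}, second corrector from \eqref{markovvar}), the same identification of the drift as $-\sigma(\theta)(-\Delta_\xt{k})^{\theta/2}$ after the rescaling $\xt{p}\mapsto \xt{p}/\e^{s_c}$, and — crucially — the same observation that at the critical balance $s_c=s$ the spatial phases $e^{i\xt{p}\cdot(\xt{x}_1-\xt{x}_2)/\e^{s}}$ become $O(1)$ after rescaling, so the quadratic (second-order) term is no longer killed by Riemann--Lebesgue and the limit stays random. Your reading of why $\ga=0,\beta=1/2$ is excluded also matches the paper (at that point $1/(1+\theta)=1/(2\kappa_0)$, and the corrector estimates degenerate exactly there). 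Where you genuinely diverge is the identification/uniqueness step. The paper does \emph{not} construct the driving noise from the limit points: following Fannjiang and Papanicolaou--Weinryb, it tensorizes the process, $W^M_\e=\bigotimes_{j=1}^M W_\e$, shows every accumulation point of $W^M_\e$ satisfies a martingale problem whose expectation solves a \emph{deterministic} PDE $\partial_t\lambda^M=(G^M_1+\tilde G^M_2)\lambda^M$, proves uniqueness of that PDE by solving it in the Fourier domain and a duality argument, matches the resulting moments with those of the explicit candidate $\tilde W$ via It\^o's formula, and finally upgrades equality of one-time distributions (moments suffice since everything lives in the ball $\mathcal{B}_r$) to equality of finite-dimensional distributions by the classical martingale-problem argument of Karatzas--Shreve. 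Your route — compute the predictable quadratic variation of the limit martingale and then prove weak uniqueness of \eqref{radtranseq2} by Fourier transforming in $\xt{k}$, reducing to scalar linear multiplicative-noise SDEs with explicit exponential solutions — is a legitimate and arguably more direct alternative, and your It\^o verification of the explicit formula is exactly right (the It\^o correction of the exponential reproduces $-\sigma(\theta)\lvert\xt{q}\rvert^{\theta}$ via the L\'evy--Khinchine identity). But it silently requires an infinite-dimensional martingale representation theorem to manufacture the $\mathcal{H}'_\theta$-Brownian motion $\mathcal{B}$ from a limit point on a possibly enlarged probability space; without that step, "the equation becomes a linear SDE driven by $\mathcal{B}_t$" is not available, since the martingale problem only hands you a covariance structure, not a noise. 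The paper's moment method is precisely designed to avoid this construction.

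One concrete flaw: your tightness argument as stated would fail. Testing \eqref{wignereq} directly against smooth cutoffs gives a time derivative of size $\e^{(1-\ga)/2-s}\to+\infty$ (here $s=1/(1+\theta)>1/2\geq(1-\ga)/2$), so no equicontinuity estimate comes out of the raw equation; conservation of the $L^2$ norm gives boundedness, not modulus-of-continuity control. Tightness must be run through the corrected test functions (this is what Kushner's theorem packages, and what the paper does verbatim from the proof of Theorem \ref{thasymptotic}); since you have already built $\lambda^{(1)}_\e,\lambda^{(2)}_\e$, this is repairable, but the repair is exactly the perturbed-test-function estimate, not the naive one you invoke. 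Relatedly, note that the weak-topology tightness on $(\mathcal{B}_r,d_{\mathcal{B}_r})$ is what the machinery yields; the upgrade to the strong $L^2$ topology claimed in the statement uses that the limit preserves the norm, a point both you and the paper treat briefly.
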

 
Let us remark that the proof of the weak uniqueness of \eqref{radtranseq2} follows the idea developed in \cite{fannjianguni,weinryb}, and is the same as the one given in Section \ref{proof2} to prove that all the converging subsequences of $(W_\e)_{\e\in(0,1)}$ have the same distribution.

This limiting Wigner transform is random because the wave does not propagate enough to observe a self-averaging of the stochastic phenomena. In fact, as shown in Theorem \ref{thasymptotic} for all $s>1/(1+\theta)$ the limiting Wigner transform is self-averaging and is equal to the expectation of the limiting Wigner transform obtain in the case $s_c=s=1/(1+\theta)$.  

Let us note that the convergence holds on $L^2(\mathbb{R}^{2d})$ equipped with the strong topology. In fact, the conservation relation $\|W_\e(t)\|_{L^2(\mathbb{R}^{2d})}=\|W_{0,\e}\|_{L^2(\mathbb{R}^{2d})}$ is preserved for the limiting process $W$. In this scaling the limit $W$ is a stochastic process, which is given in the Fourier domain by a random phase modulation. As illustrated in Figure \ref{schema}, the random phase modulation of the Wigner transform is caused by the fast phase modulation of the wave field $\phi_\e$ itself and described in Theorem \ref{thphase} in the asymptotic $\e \to0$. For $s\in(1/(2\kappa_\ga),1/(1+\theta))$ the random phase modulation  the wave field $\phi_\e$ oscillates very fast up to produce a phase modulation  on the Wigner transform at the propagation scale $\e^{-s}$ with $s=1/(1+\theta)$. It is the first propagation scale parameter $s$ for which the coherence of the wave is affected. Afterwards, for the propagation scale parameter $s>1/(1+\theta)$ this random phase modulation of the Wigner transform oscillates very fast and then average out to obtain the determinist wave decoherence mechanism described in Theorem \ref{thasymptotic}.

Theorem \ref{thasymptotic} and Theorem \ref{thasymptotic3} show a qualitative difference between the random effects induced on a wave propagating in long range and in rapidly decorrelating random media in time (see $\eqref{SDCAcor}$).  As previously noted, it has been shown in \cite{bal2} for rapidly decorrelating random media that the field $\phi_e$ does not evolve before the scale $s=1$, so that there is no wave decoherence before $s=1$. In this case the phase and the phase space energy of the wave, which describes the wave decoherence, evolve on the same scale. Let us give a probabilistic interpretation to illustrate the difference of the random effects caused by the long range and rapidly decorrelating random media. The wave decoherence is given by the random operator \eqref{lepsilon}, which after homogenization gives rise to an operator of the form
\begin{equation}\label{levy0}\e^{1-s}\int d\xt{p}\sigma(\xt{p})\Big(\lambda\big(\xt{k}+\frac{\xt{p}}{\e^{s_c}}\big)-\lambda(\xt{k})\Big).\end{equation}
Formally, the momentum diffusion of the Wigner is given by the variations of a stochastic jump process \cite[Chapter 1]{sato} with infinitesimal generator given by \eqref{levy0}, which characterizes its variations. In the case of rapidly decaying correlations $\sigma(\xt{p})\in L^1(\mathbb{R^d})$ so that the variations of the jump process are therefore bounded by $\mathcal{O}(\e^{1-s})$ for all $s_c$, that is why we cannot observe wave decoherence phenomena in rapidly decorrelating random media for any $s_c>0$. The variations of the jump process become significant only for the scaling $s=1$ and $s_c=0$, and that is why we can observe wave decoherence only at this scaling. However, for long-range random media the variations of the jump process can be large and described by
\[\e^{1-s}\int d\xt{p}\frac{a(\xt{p})}{\lvert \xt{p}\rvert^{d+\theta}}\Big(\lambda\big(\xt{k}+\frac{\xt{p}}{\e^{s_c}})-\lambda(\xt{k})\Big)\underset{\e\to 0}{\sim} \e^{1-s-\theta s_c} a(0)\int \frac{d\xt{p}}{\lvert \xt{p}\rvert^{d+\theta}}\Big(\lambda(\xt{k}+\xt{p})-\lambda(\xt{k})\Big). \]
As a result the large variations can balance the small term $\e^{1-s}$ and give rise to significant momentum diffusion. In rapidly decorrelating random media the wave decoherence is only significant for $s_c=0$ and for a sufficiently large propagation distance, while for the long-range random media the wave decoherence occurs first on the large correlation scale and propagates to the smaller ones as the propagation scale increase, up to the scaling $s_c=0$ and $s=1$ (see Figure \ref{schema}). We will see in the next section that the wave decoherence mechanism in the scaling $s=1$ and $s_c=0$ for long-range and rapidly decorrelating random media is exactly the same, but however, this decoherence mechanism has different regularity properties in both cases.

Let us remark that the spatial frequency of the initial condition \eqref{initcond} which is of order $\e^{-(s-s_c)}$ is low compared to the one of the random medium ($\sim \e^{-s}$) on the macroscopic scale $\e^{-s}$. In rapidly deccorelating random media such low frequency sources do not interact with the random medium. However, as we have seen in Theorem \ref{thasymptotic} and Theorem \ref{thasymptotic3} low frequency initial conditions interact strongly with slowly decorrelating random media. This fact can be useful in passive imaging of a target in a slowly decorrelating random medium \cite{garnier2}.

\begin{figure}\begin{center}
$(a)$\includegraphics[scale=0.25]{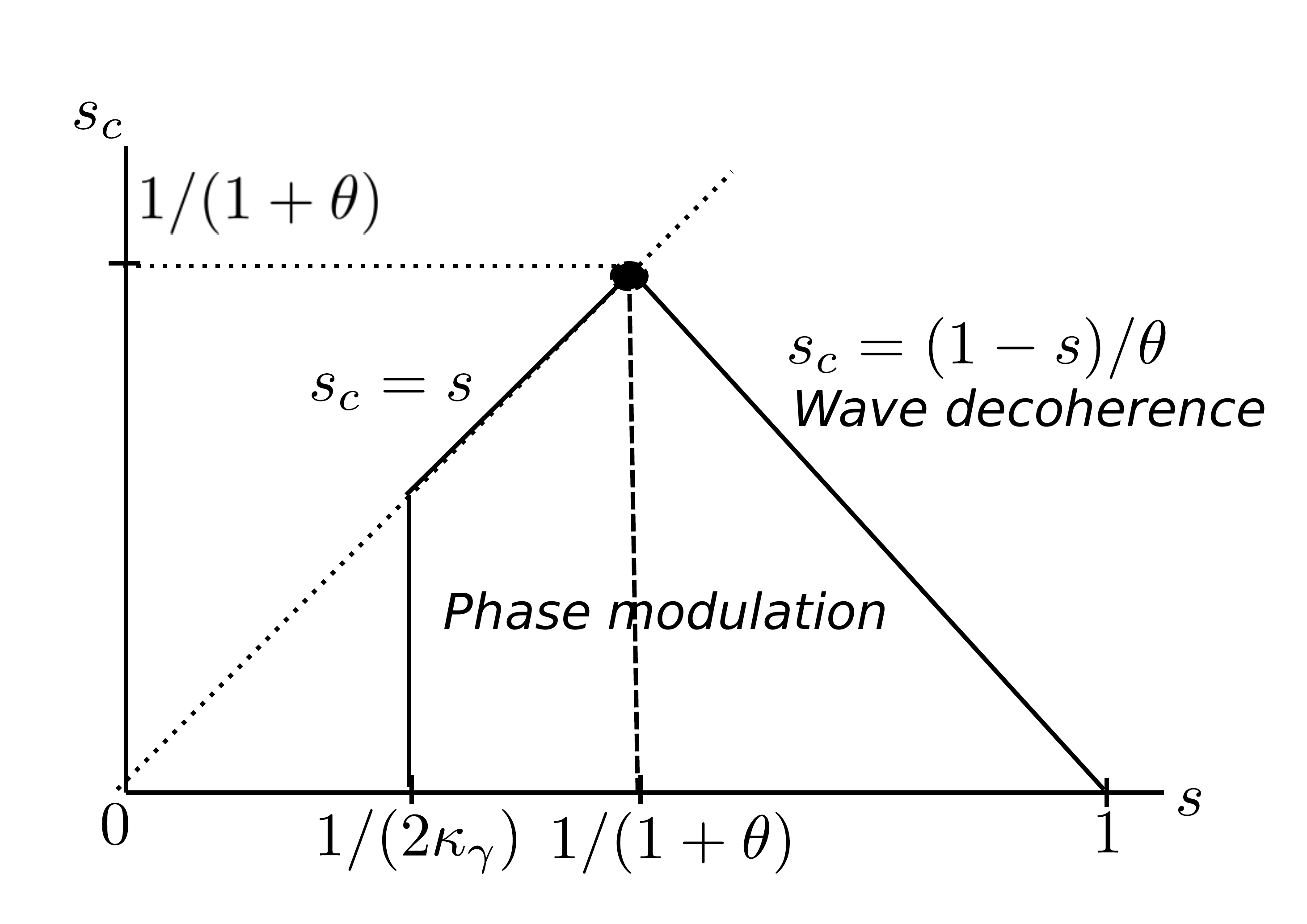}$(b)$\includegraphics[scale=0.25]{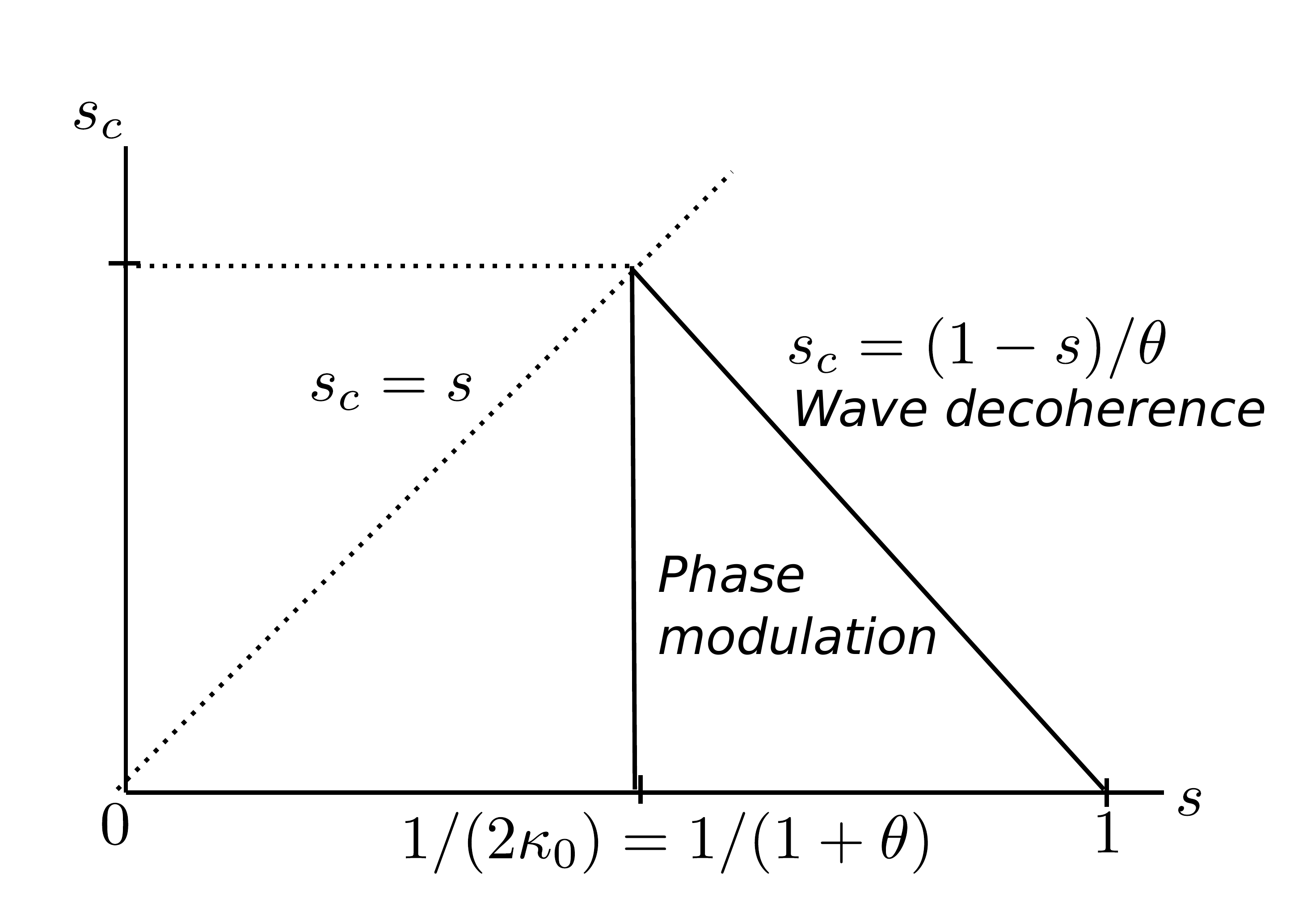}\caption{\label{schema} Schematic representation of the behavior of the wave field $\phi_\e$ solution of \eqref{schrodingereq}. $s$ is the propagation scale parameter and $s_c$ is the spatial correlation parameter. The phase modulation effects appear for $s=1/(2\kappa_\ga)$ for all $s_c\leq s$. Afterward, wave decoherence appears on the low spatial correlation scales first, and then propagates to the higher one according to the formula $s_c=(1-s)/\theta$. In $(a)$ we represent the behavior of the wave field in the case where $\ga>0$ or $\beta<1/2$. The dot represents the transition between the phase modulation effects and the wave decoherence effects (see Theorem \ref{thasymptotic3}). In $(b)$  we represent the behavior of the wave in the case where $\ga=0$ and $\beta=1/2$.}
\end{center}\end{figure}

\section{The Radiative Transport Scaling $s=1$ and $s_c=0$}\label{sections1}

This section describes the evolution of the wave decoherence mechanism in which the momentum diffusion and the dispersion are of order $1$. The following results have been proved in \cite{gomez}, but we state these results in order to provide a complete and self-contained presentation of the wave decoherence mechanism in long-range random media. 

The radiative transport scaling is an important scaling since it is the only one for which wave decoherence happens for rapidly deccorelating random media. This scaling provides also wave decoherence for long-range random media, but the momentum  diffusion is not exactly the fractional diffusion as previously obtained. Even if the decoherence mechanism are the same in long-range and rapidly decorrelating random media this result highlights an important qualitative difference between the two kinds of random media.

In the radiative transfer scaling in addition to a momentum diffusion, now we also have a dispersion term $\xt{k}\cdot \nabla_{\xt{x}}$. Using the notation introduced in Section \ref{wignersection}, we have the following result.

\begin{thm}\label{thasymptotic2}
Let us assume that \eqref{SDCA} holds. For $\ga>0$, the family $(W_\e)_{\e\in(0,1)}$ of Wigner transform defined by \eqref{wignertrans} with $s=1$ and $s_c=0$, solution of the transport equation \eqref{wignereq}, converges in probability on 
$\mathcal{C}([0,+\infty), (\mathcal{B}_{r} ,d_{\mathcal{B}_{r}}))$ as $\e \to 0$ to a limit denoted by $W$. More precisely, for all $T>0$ and for all $\eta>0$,
\[\lim_{\e\to 0}\Pro\left(\sup_{t\in[0,T]}d_{\mathcal{B}_r}(W_\e(t),W(t))>\eta\right)=0,\]
where $W$ is the unique classical solution uniformly bounded in $L^2(\mathbb{R}^{2d})$ of the radiative transfer equation
\begin{equation}\label{radtranseq3}
\partial_t W+\xt{k}\cdot \nabla_{\xt{x}} W=\mathcal{L}W,
\end{equation} 
with $W(0,\xt{x},\xt{k})=W_0(\xt{x},\xt{k})\in L^2(\mathbb{R}^{2d})$ defined by \eqref{w01}. 
Here, $\mathcal{L}$ is defined by
\begin{equation}\label{formgene}
\mathcal{L}\varphi(\xt{k})=\int d\xt{p}\sigma(\xt{p}-\xt{k})\big(\varphi(\xt{p})-\varphi(\xt{k})\big),
\end{equation}
with $\varphi\in\mathcal{C}^\infty (\mathbb{R}^d)$ and where 
\[\sigma(\xt{p})=\frac{2\widehat{R}_0(\xt{p})}{(2\pi)^d\mathfrak{g}(\xt{p})}=\frac{2a(\xt{p})}{(2\pi)^d \lvert \xt{p}\rvert^{d+\theta}},\]
with  $\theta\in(0,1)$. Moreover, $W$ is given by
\[W(t,\xt{x},\xt{k})=\frac{1}{(2\pi)^{2d}}\int d\xt{y}d\xt{q} e^{i(\xt{x}\cdot\xt{y}+\xt{k}\cdot\xt{q})}e^{\int_0^t du \Psi(\xt{q}+u\xt{y})}\widehat{W}_0(\xt{y},\xt{q}+t\xt{y}),\]
where 
\[\Psi(\xt{q})=\int d\xt{p}\sigma(\xt{p})(e^{i\xt{p}\cdot\xt{q}}-1),\]
so that for all $t_0>0$ we have 
\[
W\in \mathcal{C}^0\Big( (0,+\infty), \bigcap_{k\geq 0} H^k (\mathbb{R}^{2d}) \Big)\cap L^\infty \Big([t_0,+\infty),\bigcap _{k\geq 0} H^k (\mathbb{R}^{2d})\Big).
\]
\end{thm}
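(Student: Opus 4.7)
The plan is to apply the perturbed-test-function martingale method of Kushner and Papanicolaou--Stroock--Varadhan, adapted to our infinite-dimensional setting. By Proposition \ref{propmar} the process $\widehat{V}(t/\e^{1+\ga},\cdot)$ is Markov, so the pair $(W_\e(t),\widehat{V}(t/\e^{1+\ga}))$ is Markov with generator $\mathcal{A}_\e=\e^{-(1+\ga)}\mathcal{A}^V+\mathcal{T}_\e$, where $\mathcal{A}^V$ is the infinite-dimensional Ornstein--Uhlenbeck generator of the $\widehat{V}$-process (acting by multiplication with $-\mathfrak{g}(\xt{p})$ on the Fourier side, as dictated by \eqref{markovesp}--\eqref{markovvar}) and, for $F(W,V)=f(\big<W,\lambda\big>_{L^2})$ with $\lambda\in\mathcal{S}(\mathbb{R}^{2d})$ and $f\in\mathcal{C}_b^\infty(\mathbb{R})$,
\[
\mathcal{T}_\e F=f'(\big<W,\lambda\big>)\big<W,\xt{k}\cdot\nabla_{\xt{x}}\lambda\big>+\e^{-(1+\ga)/2}f'(\big<W,\lambda\big>)\big<W,\mathcal{L}_\e^V\lambda\big>.
\]
I would build a perturbation $F_\e=F+\e^{(1+\ga)/2}F_1+\e^{1+\ga}F_2$ with $F_1,F_2$ chosen to solve Poisson equations $\mathcal{A}^V F_1=-f'(\big<W,\lambda\big>)\big<W,\mathcal{L}_\e^V\lambda\big>$ and $\mathcal{A}^V F_2=-(\mathcal{T}_\e^{\mathrm{ran}}F_1-\E[\mathcal{T}_\e^{\mathrm{ran}}F_1])$, solvable explicitly since $e^{s\mathcal{A}^V}$ acts on the Fourier coefficients of $\widehat{V}$ by the multiplier $e^{-\mathfrak{g}(\xt{p})s}$. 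A routine computation using the covariance \eqref{covfunc} and $\int_0^\infty e^{-\mathfrak{g}(\xt{p})s}ds=1/\mathfrak{g}(\xt{p})$ then shows that $\E[\mathcal{T}_\e^{\mathrm{ran}}F_1]$ converges as $\e\to 0$ to $f'(\big<W,\lambda\big>)\big<W,\mathcal{L}\lambda\big>$, with $\mathcal{L}$ as in \eqref{formgene} and scattering kernel $\sigma(\xt{p})=2\widehat{R}_0(\xt{p})/[(2\pi)^d\mathfrak{g}(\xt{p})]$.

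For tightness on $\mathcal{C}([0,\infty),(\mathcal{B}_r,d_{\mathcal{B}_r}))$, the uniform bound $\|W_\e(t)\|_{L^2}=\|W_{0,\e}\|_{L^2}\leq r$ coming from the unitary evolution of the Schr\"odinger equation puts all trajectories in the compact metric space $(\mathcal{B}_r,d_{\mathcal{B}_r})$, so only time equicontinuity needs to be checked. By definition \eqref{defmetric} this reduces to the uniform-in-$\e$ modulus of continuity of $\big<W_\e(t),g_n\big>$ for a countable dense family $\{g_n\}$, and that estimate is exactly what the near-martingale property of $F_\e(W_\e(t))$, together with the uniform bounds on $F_1$ and $F_2$, yields via standard Burkholder--Davis--Gundy arguments applied to the martingale part of $F_\e(W_\e(t))-F_\e(W_\e(0))-\int_0^t\mathcal{A}_\e F_\e(W_\e(u))du$.

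Once tightness is established, every subsequential limit $W$ satisfies the martingale problem whose generator is the deterministic drift $f'(\big<W,\lambda\big>)\big<W,\xt{k}\cdot\nabla_{\xt{x}}\lambda+\mathcal{L}\lambda\big>$; varying $f$ and $\lambda$ this forces $W$ to be a weak solution of \eqref{radtranseq3} with initial datum \eqref{w01}, which already yields convergence in probability to the deterministic limit (no diffusion, hence no randomness in the limit). The explicit formula is obtained by taking the Fourier transform $\widehat{W}(t,\xt{y},\xt{q})$ in both $\xt{x}$ and $\xt{k}$; the convolution operator $\mathcal{L}$ becomes multiplication by $\Psi(\xt{q})=\int d\xt{p}\,\sigma(\xt{p})(e^{i\xt{p}\cdot\xt{q}}-1)$ and the transport by $\xt{k}\cdot\nabla_{\xt{x}}$ becomes $-\xt{y}\cdot\nabla_{\xt{q}}$, so the PDE reduces to $\partial_t\widehat{W}-\xt{y}\cdot\nabla_{\xt{q}}\widehat{W}=\Psi(\xt{q})\widehat{W}$. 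Solving by the method of characteristics $\xt{q}(t)=\xt{q}_0-t\xt{y}$ and inverting yields the stated formula and also proves uniqueness in $L^2(\mathbb{R}^{2d})$. Because $\mathrm{Re}\,\Psi(\xt{q})\leq -c|\xt{q}|^\theta$ for $|\xt{q}|$ large (by the L\'evy--Khintchine form of $\Psi$ and the asymptotic $\sigma(\xt{p})\sim 2a(0)/[(2\pi)^d|\xt{p}|^{d+\theta}]$ near zero), the factor $\exp(\int_0^t\Psi)$ decays faster than any polynomial in $\xt{q}$ for $t>0$, giving the instantaneous smoothing into $\bigcap_k H^k$.

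The main obstacle is the singularity of $\widehat{R}_0(\xt{p})/\mathfrak{g}(\xt{p})\sim a(0)/|\xt{p}|^{d+\theta}$ at $\xt{p}=0$ inherited from the long-range assumption \eqref{SDCA}: the unperturbed second-moment computation of $\big<W,\mathcal{L}_\e^V\lambda\big>^2$ formally diverges, and one must keep the algebraic identity $\lambda(\xt{k}-\xt{p}/2)-\lambda(\xt{k}+\xt{p}/2)=O(|\xt{p}|)$ \emph{before} taking absolute values, so that the resulting symmetric difference cancels the singularity in the same way that $W(t,\xt{x},\xt{p})-W(t,\xt{x},\xt{k})$ regularizes $\sigma(\xt{p}-\xt{k})$ in \eqref{formgene}. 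Controlling this cancellation uniformly in $\e$, together with verifying that $F_1$ and $F_2$ are honest functionals (in particular, lie in the domain of $\mathcal{A}^V$) against the singular measure $m(d\xt{p})$ on $\mathcal{H}_m$, is the technical heart of the argument.
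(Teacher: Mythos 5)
First, a point of orientation: this paper does not itself prove Theorem \ref{thasymptotic2}; the statement is recalled from \cite{gomez}, and Sections \ref{proof} and \ref{proof2} prove only Theorem \ref{thasymptotic} and Theorem \ref{thasymptotic3} --- by exactly the pseudogenerator/perturbed-test-function machinery you propose. So your overall framework (fast Ornstein--Uhlenbeck driver, correctors, tightness on $(\mathcal{B}_r,d_{\mathcal{B}_r})$, identification of subsequential limits, Fourier/characteristics representation giving both the explicit formula and uniqueness, and the decay of $\Psi$ giving the $H^k$ regularization) is the right one and coincides in spirit with the method used in the paper and in the cited proof.

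There is, however, a genuine gap in your corrector construction. You solve the Poisson equation $\mathcal{A}^V F_1=-f'(\big<W,\lambda\big>)\big<W,\mathcal{L}^V_\e\lambda\big>$ with respect to the Ornstein--Uhlenbeck generator alone, producing the factor $\int_0^{+\infty}e^{-\mathfrak{g}(\xt{p})u}\,du=1/\mathfrak{g}(\xt{p})$. But $\mathcal{L}^V_\e\lambda$ carries the rapidly oscillating spatial phase $e^{i\xt{p}\cdot\xt{x}/\e}$ (see \eqref{lepsilon} with $s=1$), so when the $O(1)$ transport part of the generator acts on $F_1$, the derivative $\xt{k}\cdot\nabla_{\xt{x}}$ hits this phase and creates a mean-zero remainder of size $\e^{(1+\ga)/2}\cdot\e^{-1}=\e^{(\ga-1)/2}$, which diverges for $\ga<1$; your $F_2$, which only corrects $\mathcal{T}^{\mathrm{ran}}_\e F_1$, does not remove it. The corrector must instead be built along the free-flight characteristics --- this is precisely the factor $e^{i(u-t)\xt{p}\cdot\xt{k}/\e^{s}}$ inside $f^\e_1$ in Section \ref{tightnesssec} --- which replaces $1/\mathfrak{g}(\xt{p})$ by $1/(\mathfrak{g}(\xt{p})-i\e^{\ga}\xt{k}\cdot\xt{p})$. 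This is not cosmetic: it is exactly where the hypothesis $\ga>0$ enters. For $\ga>0$ the denominator tends to $\mathfrak{g}(\xt{p})$ and one recovers the stated kernel $\sigma(\xt{p})=2\widehat{R}_0(\xt{p})/[(2\pi)^d\mathfrak{g}(\xt{p})]$; for $\ga=0$ it does not, and the limiting kernel becomes the frequency-dependent $\sigma\big(\xt{p}-\xt{k},(\lvert\xt{k}\rvert^2-\lvert\xt{p}\rvert^2)/2\big)$ discussed right after Theorem \ref{thasymptotic2}, for which weak uniqueness is open. Your computation never uses $\ga>0$ and would therefore ``prove'' the same conclusion for $\ga=0$, which is false. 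A second, smaller omission: your assertion that every subsequential limit solves a martingale problem with deterministic drift only (hence self-averaging and convergence in probability) requires showing that the $f''$-covariance term, which carries the phase $e^{i\xt{p}\cdot(\xt{x}_1-\xt{x}_2)/\e}$, vanishes via the Riemann--Lebesgue lemma; this is exactly the step that fails in the scaling $s=s_c$ of Theorem \ref{thasymptotic3}, where the limit remains random, so in the present scaling it must be argued rather than asserted.
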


In Theorem \ref{thasymptotic2} $H^k(\mathbb{R}^{2d})$ stands for the $k$Th. Sobolev space on $\mathbb{R}^{2d}$, and $\widehat{W}_0$ stands for the Fourier transform in both variables $\xt{x}$ and $\xt{k}$. Let us note that the case $\ga=0$, has not been addressed in \cite{gomez}, because it leads to much more difficult algebra than the cases $\ga\in(0,1)$. More precisely,  we show in this case the tightness of the family $(W_\e)_\e$ and show that all the subsequence limits are deterministic weak solutions of the same transport equation \eqref{radtranseq3} with
\[\mathcal{L}\varphi(\xt{k})=\int d\xt{p}\sigma\Big(\xt{p}-\xt{k},\frac{\lvert \xt{k}\rvert^2-\lvert \xt{p}\rvert^2}{2}\Big)\big(\varphi(\xt{p})-\varphi(\xt{k})\big)\quad\text{with}\quad\sigma(\xt{p},\omega)=\frac{2\mathfrak{g}(\xt{p})\widehat{R}_0(\xt{p})}{(2\pi)^d(\mathfrak{g}(\xt{p})^2+\omega^2)}.\]

However, it is difficult to show the weak uniqueness of the limiting transfer equation in the slowly decorrelating case. First, the technique used in the proof of Theorem \ref{thasymptotic2} to show the weak uniqueness leads to very difficult algebra. Second, it should be possible to use the techniques developed in \cite{caffarelli}, but this kind of techniques use a lower and an upper bound of $\sigma$ in terms of $\lvert \xt{k}-\xt{p}\rvert^{-(d+\theta)}$, and we just have an upper bound of this form. Nevertheless, we think that the transport equation obtained in the case $\ga=0$ is still weakly well posed.

As already discussed in Section \ref{sectionint}, we cannot expect a convergence on $L^2(\mathbb{R}^{2d})$ equipped with the strong topology in Theorem  \ref{thasymptotic2} since the conservation relation $\|W_\e(t)\|_{L^2(\mathbb{R}^{2d})}=\|W_{0,\e}\|_{L^2(\mathbb{R}^{2d})}$ cannot be satisfied by the limit $W$. Moreover, the Wigner distribution $W_\e$ is also self-averaging as $\e$ goes to $0$, that is the limit $W$ is not random anymore. This self-averaging phenomenon of the Wigner distribution has already been observed in several studies \cite{bal3, bal,bal4,gomez} and is very useful for applications.

Equation \eqref{radtranseq3} describes the loss of coherence of the wave field $\phi_\e$ solution of the random Schrödinger equation \eqref{schrodingereq}. It describes the dispersion phenomenon through the transport term $\xt{k}\cdot\nabla_\xt{x}$, and the wave decoherence through the nonlocal transfer operator $\mathcal{L}$ defined by \eqref{formgene}. Finally, the transfer coefficient $\sigma(\xt{p}-\xt{k})$ describes the energy transfer between the modes $\xt{k}$ and $\xt{p}$.   

An interesting remark is that the result of Theorem \ref{thasymptotic2} does not depend on whether $\int d\xt{p} \sigma(\xt{p})$ is finite or not. In other words, the radiative transfer equation \eqref{radtranseq3} is valid in the two case, slowly and rapidly decaying correlations in time (see \eqref{SDCAcor}). However, as noted in Theorem \ref{thasymptotic2}, these equations, in the both cases, behave in different ways. As it has been discussed in Section \ref{SDCAsec}, in the case of rapidly decaying correlations in time, that is $\int d\xt{p} \sigma(\xt{p})<+\infty$, the radiative transfer equation \eqref{radtranseq3} has the same properties as in the mixing case addressed in \cite{bal6}. In the case of slowly decaying correlations in time, that is $\int d\xt{p} \sigma(\xt{p})=+\infty$, we observe a regularizing effect \cite[Theorem 4.1]{gomez} of the solutions of  \eqref{radtranseq3} which cannot be observed in the case of rapidly decaying correlations in time. This regularizing property highlights an important qualitative difference between the cases rapidly and slowly decaying correlations, and admits the following probabilistic representation in terms of Lévy processes \cite[Chapter 1]{sato}. According to \cite[Proposition 5.1]{gomez}, we have
\begin{equation}\label{probarep}W(t, \xt{x},\xt{k})=\E\Big[W_0\Big(\xt{x}-t\xt{k}-\int_0^t L_sds,\xt{k}+L_t\Big)\Big],\end{equation}
where $W$ is the unique classical solution of the radiative transport equation \eqref{radtranseq3} with initial datum $W_0\in L^2(\mathbb{R}^{2d})$, and where $(L_t)_{t\geq0}$ is a Lévy process with $L_0=0$ and infinitesimal generator \eqref{formgene}. Let us note that $(L_t)_{t\geq0}$ is a pure jump process with jump measure $\sigma(\xt{p})d\xt{p}$. Therefore, because of the long-range correlation property in time \eqref{SDCAcor} the jump process has infinitely many small jumps with probability one \cite[Theorem 21.3 pp. 136]{sato}. This last property of the symmetric process $(L_t)_{t\geq 0}$ is the key to show the regularizing effect of the radiative transfer equation \eqref{radtranseq3}. In fact, according to \cite[Proposition 1.1]{picard}, the Levy process $(L_t)_{t\geq0}$ has a smooth bounded density, which permits to obtain smoothness on the variable $\xt{k}$. Moreover, the transport term in \eqref{radtranseq3} permits to transfer the smoothness from the variable $\xt{k}$ to the variable $\xt{x}$. However, this regularizing effect cannot be observed in the case rapidly decaying random media in time. In fact, in this case we have the same probabilistic representation \eqref{probarep} but where the Lévy process $(L_t)_{t\geq0}$ has a bounded jump measure $\sigma(\xt{p})d\xt{p}$ (see  \eqref{SDCAcor}). In this context, there is a positive probability that on any finite time interval $(L_t)_{t\geq0}$ does not jump \cite[Theorem 21.3 pp. 136]{sato}. Consequently, in this context there is not enough jump to induce regularization on the Wigner transform. 

Let us note that the momentum diffusion in \eqref{radtranseq3} is not exactly the same diffusion mechanism as the one obtained in Section \ref{sectionint}, but they are very closed. In fact the momentum diffusion given in Theorem \ref{thasymptotic} is described in terms of a fractional Laplacian, while in the radiative transfer regime the momentum diffusion is described in terms of a nonlocal operator which is not exactly a fractional Laplacian. However, this two diffusion mechanisms are anomalous diffusions since they lead to damping terms obeying to a power law with exponent $\theta\in(0,1)$. We have to wait for a long time of propagation in the radiative transfer regime to observe again the momentum diffusion given by a fractional Laplacian. This approximation in the radiative transfer scaling is proved in \cite[Theorem 5.1]{gomez}

\section*{Conclusion}

In this paper we have studied the different behaviors happening on a wave propagating in a random media with long-range correlation properties. We have exhibited three different behaviors over a range of scales given by Theorem \ref{thphase}, Theorem \ref{thasymptotic}, Theorem \ref{thasymptotic3}, and Theorem \ref{thasymptotic2}. These asymptotic behaviors differ strongly from those obtain with the random Schrödinger equation with rapidly decaying correlations \cite{bal0,bal6,bal2}, for which all the random effects appear on the wave on the same propagation scale $\e^{-1}$ ($s=1$). In the context of long-range correlations, the effects of the randomness appear progressively according the scale of propagation. We have seen in Theorem  \ref{thphase} that the random perturbations induce a phase modulation in term of fractional Brownian motion on the wave field itself. This random phase modulation begins to oscillate very fast up to break the wave coherence. The first wave decoherence mechanism is described by a random phase modulation on the Wigner transform (Theorem \ref{thasymptotic3}) on the large spatial scale.  Afterward, this phase modulation on the Wigner transform begins also to oscillate very fast up to average out and then gives a deterministic wave decoherence mechanism. The wave decoherence first happens on the large spatial scales and then propagates to smaller one as the propagation distance increases (Theorem \ref{thasymptotic}, Theorem \ref{thasymptotic3}, Theorem \ref{thasymptotic2}, and see Figure \ref{schema}). The wave decoherence mechanism is described in term of an anomalous momentum diffusion (Theorem \ref{thasymptotic} and Theorem \ref{thasymptotic2}) since it obeys to a power law with exponent lying in $(0,1)$. Theorem \ref{thasymptotic} shows that low frequency initial conditions interact strongly with slowly decorrelating random media. This result can be useful in passive imaging of a target in a long-range random medium \cite{garnier2}.

\section{Proof of Proposition \ref{propmar}}\label{proofpropmar}

First, to prove \eqref{markovesp} it suffices to show, for all $n\geq 1$ and $0\leq t_1\leq \dots\leq t_{n+1}$, that
\[\mathbb{E}\big[ \widehat{V}(t_{n+1},\cdot) \vert \widehat{V}(t_{1},\cdot),\dots ,\widehat{V}(t_{n},\cdot)  \big]=e^{-\mathfrak{g}(\xt{p})(t_{n+1}-t_n)}\widehat{V}(t_n,\cdot),\]
on $\mathcal{H}'_m$. For $n=1$ and $0\leq t_1\leq t_2$, we write
\[\widehat{V}(t_{2},\cdot)=e^{-\mathfrak{g}(\xt{p})(t_{2}-t_1)}\widehat{V}(t_{1},\cdot) + Y,\]
where $Y$ and $\widehat{V}(t_{1},\cdot)$ are independent, since they are zero-mean Gaussian variables and
 \[\begin{split} 
 \E[Y(\varphi)\widehat{V}(t_1)(\psi)]&= \E[\widehat{V}(t_2)(\varphi)\widehat{V}(t_1)(\psi)]- \E[\widehat{V}(t_1)(\varphi_{t_2-t_1})\widehat{V}(t_1)(\psi)]\\
 &=\int m(d\xt{p})\,\varphi(\xt{p})\overline{\psi(\xt{p})}(e^{-\mathfrak{g}(\xt{p})(t_{2}-t_1)}-e^{-\mathfrak{g}(\xt{p})(t_{2}-t_1)} )\\
 &=0,
 \end{split} \]
 where $\varphi_{s}(\xt{p})=e^{-\mathfrak{g}(\xt{p})s}\varphi(\xt{p})$
 and for all $(\varphi,\psi)\in\mathcal{H}_m\times\mathcal{H}_m$. As a result, we have
 \[\mathbb{E}\big[ \widehat{V}(t_{2},\cdot) \vert \widehat{V}(t_{1},\cdot)\big]=e^{-\mathfrak{g}(\xt{p})(t_{2}-t_1)}\widehat{V}(t_1,\cdot).\]
Now, let us fix $n\geq 2$ and assume that for all family $(s_j)_{j\in\{1,\dots,n\}}$ such that $0\leq s_1\leq \dots\leq s_{n}$
\[  \mathbb{E}\big[ \widehat{V}(s_{n},\cdot) \vert \widehat{V}(s_{1},\cdot),\dots ,\widehat{V}(s_{n-1},\cdot)  \big]=e^{-\mathfrak{g}(\xt{p})(s_{n}-s_{n-1})}\widehat{V}(s_{n-1},\cdot), \]
Then, we write
\[\widehat{V}(t_{n+1},\cdot)=e^{-\mathfrak{g}(\xt{p})(t_{n+1}-t_n)}\widehat{V}(t_{n},\cdot) + Y,\]
where $Y$ and $\widehat{V}(t_{n},\cdot)$ are independent as explained above, so that 
\[\begin{split}
\mathbb{E}\big[ \widehat{V}(t_{n+1},\cdot) \vert \widehat{V}(t_{1},\cdot),\dots ,\widehat{V}(t_{n},\cdot)  \big]&= e^{-\mathfrak{g}(\xt{p})(t_{n+1}-t_n)}\widehat{V}(t_n,\cdot)+\mathbb{E}\big[ Y  \vert \widehat{V}(t_{1},\cdot),\dots ,\widehat{V}(t_{n-1},\cdot) \big] \\
&=e^{-\mathfrak{g}(\xt{p})(t_{n+1}-t_n)}\widehat{V}(t_n,\cdot)+\mathbb{E}\big[  \widehat{V}(t_{n+1},\cdot)  \vert \widehat{V}(t_{1},\cdot),\dots ,\widehat{V}(t_{n-1},\cdot) \big]\\
&-e^{-\mathfrak{g}(\xt{p})(t_{n+1}-t_n)}\mathbb{E}\big[  \widehat{V}(t_{n},\cdot)  \vert \widehat{V}(t_{1},\cdot),\dots ,\widehat{V}(t_{n-1},\cdot) \big]\\
&= e^{-\mathfrak{g}(\xt{p})(t_{n+1}-t_n)}\widehat{V}(t_n,\cdot)\\
&+(e^{-\mathfrak{g}(\xt{p})(t_{n+1}-t_{n-1})}-e^{-\mathfrak{g}(\xt{p})(t_{n+1}-t_{n})}e^{-\mathfrak{g}(\xt{p})(t_{n}-t_{n-1})})\widehat{V}(t_{n-1},\cdot),
\end{split}\]
which concludes the proof of \eqref{markovesp} by induction. Second, to prove \eqref{markovvar} it suffices to show, for all $n\geq 1$, $0\leq t_1\leq \dots\leq t_{n+1}\leq \tilde{t}_{n+1}$ and $(\varphi,\psi)\in\mathcal{H}_m\times \mathcal{H}_m$, that
\[\begin{split}\mathbb{E}\big[ \widehat{V}(\tilde{t}_{n+1})(\varphi) &\widehat{V}(t_{n+1})(\psi) \vert \widehat{V}(t_{1},\cdot),\dots ,\widehat{V}(t_{n},\cdot)  \big]=\mathbb{E}\big[ \widehat{V}(\tilde{t}_{n+1})(\varphi) \vert  \widehat{V}(t_{1},\cdot),\dots ,\widehat{V}(t_{n},\cdot) \big]\\
&\hspace{5cm}\times\mathbb{E}\big[ \widehat{V}(t_{n+1})(\psi) \vert  \widehat{V}(t_{1},\cdot),\dots ,\widehat{V}(t_{n},\cdot) \big]\\
&+\int m(d\xt{p})\varphi(\xt{p})\overline{\psi(\xt{p})}(e^{-\mathfrak{g}(\xt{p})(\tilde{t}_{n+1}-t_{n+1})}-e^{-\mathfrak{g}(\xt{p})(\tilde{t}_{n+1}-t_{n})}e^{-\mathfrak{g}(\xt{p})(t_{n+1}-t_{n})}).
\end{split}\]
For $n=1$ and $0\leq t_1\leq t_2\leq\tilde{t}_2$, we write
\[\widehat{V}(\tilde{t}_{2},\cdot)=e^{-\mathfrak{g}(\xt{p})(\tilde{t}_{2}-t_1)}\widehat{V}(t_{1},\cdot) + \tilde{Y}\quad \text{and}\quad\widehat{V}(t_{2},\cdot)=e^{-\mathfrak{g}(\xt{p})(t_{2}-t_1)}\widehat{V}(t_{1},\cdot) + Y,\]
where $\tilde{Y}$ and $\widehat{V}(t_{1},\cdot)$ are independent as well as $Y$ and $\widehat{V}(t_{1},\cdot)$, so that
\[\mathbb{E}\big[ \widehat{V}(\tilde{t}_{2})(\varphi) \widehat{V}(t_{2})(\psi) \vert \widehat{V}(t_{1},\cdot) \big]=\mathbb{E}\big[ \widehat{V}(\tilde{t}_{2})(\varphi) \vert  \widehat{V}(t_{1},\cdot) \big]\mathbb{E}\big[ \widehat{V}(t_{2})(\psi) \vert  \widehat{V}(t_{1},\cdot) \big]+\E[\tilde{Y}(\varphi)Y(\psi)],\]
with
\[
\E[\tilde{Y}(\varphi)Y(\psi)]= \int m(d\xt{p})\varphi(\xt{p})\overline{\psi(\xt{p})}(e^{-\mathfrak{g}(\xt{p})(\tilde{t}_{2}-t_{2})}-e^{-\mathfrak{g}(\xt{p})(\tilde{t}_{2}-t_{1})}e^{-\mathfrak{g}(\xt{p})(t_{2}-t_{1})}).
\]
Now, let us fix $n\geq 2$ and assume that for all family $(s_j)_{j\in\{1,\dots,n\}}$ such that $0\leq s_1\leq \dots\leq s_{n}\leq \tilde{s}_{n}$ and for all $(\varphi,\psi)\in\mathcal{H}_m\times \mathcal{H}_m$,
\[\begin{split}\mathbb{E}\big[ \widehat{V}(\tilde{s}_{n})(\varphi) &\widehat{V}(s_{n})(\psi) \vert \widehat{V}(s_{1},\cdot),\dots ,\widehat{V}(s_{n-1},\cdot)  \big]=\mathbb{E}\big[ \widehat{V}(\tilde{s}_{n})(\varphi) \vert  \widehat{V}(s_{1},\cdot),\dots ,\widehat{V}(s_{n-1},\cdot) \big]\\
&\hspace{5cm}\times\mathbb{E}\big[ \widehat{V}(s_{n})(\psi) \vert  \widehat{V}(s_{1},\cdot),\dots ,\widehat{V}(s_{n-1},\cdot) \big]\\
&+\int m(d\xt{p})\varphi(\xt{p})\overline{\psi(\xt{p})}(e^{-\mathfrak{g}(\xt{p})(\tilde{s}_{n}-s_{n})}-e^{-\mathfrak{g}(\xt{p})(\tilde{s}_{n}-s_{n-1})}e^{-\mathfrak{g}(\xt{p})(s_{n}-s_{n-1})}).
\end{split}\]
Consequently, writing 
\[\widehat{V}(\tilde{t}_{n+1},\cdot)=e^{-\mathfrak{g}(\xt{p})(\tilde{t}_{n+1}-t_n)}\widehat{V}(t_{n},\cdot) + \tilde{Y}\quad \text{and}\quad\widehat{V}(t_{n+1},\cdot)=e^{-\mathfrak{g}(\xt{p})(t_{n+1}-t_n)}\widehat{V}(t_{n},\cdot) + Y,\]
where $\tilde{Y}$ and $\widehat{V}(t_{n},\cdot)$ are independent as well as $Y$ and $\widehat{V}(t_{n},\cdot)$, we have
\[\begin{split}
\mathbb{E}\big[ \widehat{V}(\tilde{t}_{n+1})(\varphi) \widehat{V}(t_{n+1})(\psi) \vert  \widehat{V}(t_{1},\cdot),\dots ,\widehat{V}(t_{n},\cdot) \big]&=\mathbb{E}\big[ \widehat{V}(\tilde{t}_{n+1})(\varphi) \vert  \widehat{V}(t_{1},\cdot),\dots ,\widehat{V}(t_{n},\cdot) \big]\\
&\times\mathbb{E}\big[ \widehat{V}(t_{n+1})(\psi) \vert  \widehat{V}(t_{1},\cdot),\dots ,\widehat{V}(t_{n},\cdot) \big]\\
&+\E[\tilde{Y}(\varphi)Y(\psi)\vert  \widehat{V}(t_{1},\cdot),\dots ,\widehat{V}(t_{n-1},\cdot) ]
\end{split}\]
since
\[\E[Y\vert  \widehat{V}(t_{1},\cdot),\dots ,\widehat{V}(t_{n-1},\cdot) ] = (e^{-\mathfrak{g}(\xt{p})(\tilde{t}_{n+1}-t_{n-1})}-e^{-\mathfrak{g}(\xt{p})(\tilde{t}_{n+1}-t_n)}e^{-\mathfrak{g}(\xt{p})(\tilde{t}_{n}-t_{n-1})})\widehat{V}(t_{n},\cdot)=0,\]
and in the same way $ \E[\tilde{Y}\vert  \widehat{V}(t_{1},\cdot),\dots ,\widehat{V}(t_{n-1},\cdot) ]=0$. Finally, we have
\[\begin{split}
\E[\tilde{Y}(\varphi)Y(\psi)\vert  \widehat{V}(t_{1},\cdot),\dots ,\widehat{V}(t_{n-1},\cdot) ]&=\E[ \widehat{V}(\tilde{t}_{n+1})(\varphi) \widehat{V}(t_{n+1})(\psi)\vert  \widehat{V}(t_{1},\cdot),\dots ,\widehat{V}(t_{n-1},\cdot) ]\\
& -\E[ \widehat{V}(\tilde{t}_{n+1})(\varphi) \widehat{V}(t_{n})(\psi_{t_{n+1}-t_n})\vert  \widehat{V}(t_{1},\cdot),\dots ,\widehat{V}(t_{n-1},\cdot) ]\\
&-\E[ \widehat{V}(t_{n})(\varphi_{\tilde{t}_{n+1}-t_n}) \widehat{V}(t_{n+1})(\psi)\vert  \widehat{V}(t_{1},\cdot),\dots ,\widehat{V}(t_{n-1},\cdot) ]\\
&+\E[ \widehat{V}(t_{n})(\varphi_{\tilde{t}_{n+1}-t_n}) \widehat{V}(t_{n})(\psi_{t_{n+1}-t_n})\vert  \widehat{V}(t_{1},\cdot),\dots ,\widehat{V}(t_{n-1},\cdot) ],
\end{split}\]
where $\varphi_{s}(\xt{p})=e^{-\mathfrak{g}(\xt{p})s}\varphi(\xt{p})$ and $\psi_{s}(\xt{p})=e^{-\mathfrak{g}(\xt{p})s}\psi(\xt{p})$.
As a result, we have
\[\E[\tilde{Y}(\varphi)Y(\psi)\vert  \widehat{V}(t_{1},\cdot),\dots ,\widehat{V}(t_{n-1},\cdot) ]=P_1+P_2,\]
where
\[\begin{split}P_1&=\widehat{V}(t_{n-1})(\varphi_{\tilde{t}_{n+1}-t_{n-1}}) \widehat{V}(t_{n-1})(\psi_{t_{n+1}-t_{n-1}})\\
&-\widehat{V}(t_{n-1})(\varphi_{\tilde{t}_{n+1}-t_{n-1}}) \widehat{V}(t_{n-1})(\psi_{t_{n+1}-t_n+t_n-t_{n-1}})\\
&-\widehat{V}(t_{n-1})(\varphi_{\tilde{t}_{n+1}-t_n+t_n-t_{n-1}}) \widehat{V}(t_{n-1})(\psi_{t_{n+1}-t_{n-1}})\\
&+\widehat{V}(t_{n-1})(\varphi_{\tilde{t}_{n+1}-t_n+t_n-t_{n-1}}) \widehat{V}(t_{n-1})(\psi_{t_{n+1}-t_n+t_n-t_{n-1}})\\
&=0,
\end{split}\]
and where
\[\begin{split}
P_2=&\int m(d\xt{p})\varphi(\xt{p})\overline{\psi(\xt{p})}[e^{-\mathfrak{g}(\xt{p})(\tilde{t}_{n+1}-t_{n+1})}-e^{-\mathfrak{g}(\xt{p})(\tilde{t}_{n+1}-t_{n-1})}e^{-\mathfrak{g}(\xt{p})(t_{n+1}-t_{n-1})}\\
&-e^{-\mathfrak{g}(\xt{p})(t_{n+1}-t_{n})}(e^{-\mathfrak{g}(\xt{p})(\tilde{t}_{n+1}-t_{n})}-e^{-\mathfrak{g}(\xt{p})(\tilde{t}_{n+1}-t_{n-1})}e^{-\mathfrak{g}(\xt{p})(t_{n}-t_{n-1})})\\
&-e^{-\mathfrak{g}(\xt{p})(\tilde{t}_{n+1}-t_{n})}(e^{-\mathfrak{g}(\xt{p})(t_{n+1}-t_{n})}-e^{-\mathfrak{g}(\xt{p})(t_{n}-t_{n-1})}e^{-\mathfrak{g}(\xt{p})(t_{n+1}-t_{n-1})})\\
&+e^{-\mathfrak{g}(\xt{p})(\tilde{t}_{n+1}-t_{n})}e^{-\mathfrak{g}(\xt{p})(t_{n+1}-t_{n})}(1-e^{-2\mathfrak{g}(\xt{p})(t_n-t_{n-1})})]\\
&=\int m(d\xt{p})\varphi(\xt{p})\overline{\psi(\xt{p})}[e^{-\mathfrak{g}(\xt{p})(\tilde{t}_{n+1}-t_{n+1})}-e^{-\mathfrak{g}(\xt{p})(\tilde{t}_{n+1}-t_{n})}e^{-\mathfrak{g}(\xt{p})(t_{n+1}-t_{n})}],
\end{split}\]
which concludes the proof of \eqref{markovvar} by induction.

\section{Proof of Theorem \ref{thasymptotic}}\label{proof}

The proof of Theorem \ref{thasymptotic} is based on the perturbed-test-function approach and follows the techniques of the proof of \cite[Theorem 2.2]{gomez}. Using the notion of pseudogenerator, we prove first the tightness and then characterize all the subsequence limits.

\subsection{Pseudogenerator}\label{pseudogene}

We recall the techniques developed by Kurtz and Kushner \cite[Section 2.2 pp. 38]{kushner}. Here, let us consider the filtration $\mathcal{F}^\e _t =\mathcal{F}_{t/\e}$ where $(\mathcal{F}_{t})$ is defined by \eqref{filtration}. Let $\mathcal{M}^\e $ be the set of all $\mathcal{F}^\e$-measurable functions $f(t)$, adapted to the filtration $(\mathcal{F}^\e _t)$, for which $\sup_{t\leq T} \mathbb{E}\left[\lvert f(t) \rvert \right] <+\infty $ and where $T>0$ is fixed.  The $p-\lim$ and the pseudogenerator are defined as follows. Let $f$ and $f^\delta$ in $\mathcal{M}^\e $ for all $\delta>0$. We say that $f=p-\lim_\delta f^\delta$ if
\[ \sup_{t, \delta }\mathbb{E}[\lvert f^\delta(t)\rvert]<+\infty\quad \text{and}\quad \lim_{\delta\rightarrow 0}\mathbb{E}[\lvert f^\delta (t) -f(t)\rvert]=0 \quad \forall t\geq 0.\]
We say that $f\in \mathcal{D}\left(\mathcal{A}^\e\right)$ the domain of $\mathcal{A}^\e$ and $\mathcal{A}^\e f=g$ if $f$ and $g$ are in $\mathcal{M}^\e$ and 
\begin{equation*}
p-\lim_{\delta \to 0} \left[ \frac{\mathbb{E}^\e _t [f(t+\delta)]-f(t)}{\delta}-g(t) \right]=0,
\end{equation*}
where $\mathbb{E}^\e _t$ is the conditional expectation given $\mathcal{F}^\e _t$. A useful result about the pseudogenerator $\mathcal{A}^\e$ is given by the following theorem.
\begin{thm}\label{martingale}
Let $f\in \mathcal{D}\left(\mathcal{A}^\e\right)$. Then
\begin{equation*}
M_f ^\e (t)=f(t)-f(0)-\int _0 ^t  \mathcal{A}^\e f(u)du
\end{equation*}
is an $( \mathcal{F}^\e _t )$-martingale.
\end{thm}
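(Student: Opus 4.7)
The plan is to verify the martingale property directly: for $0\leq s<t$ and any bounded $\mathcal{F}^\e_s$-measurable random variable $Z$, I will show
\[
\mathbb{E}\bigl[Z\,(f(t)-f(s))\bigr]=\mathbb{E}\Bigl[Z\int_s^t g(u)\,du\Bigr],\qquad g:=\mathcal{A}^\e f,
\]
which, combined with the fact that $f,g\in\mathcal{M}^\e$ gives $M_f^\e(t)\in L^1$ for each $t\leq T$, is exactly the assertion that $M_f^\e$ is an $(\mathcal{F}^\e_t)$-martingale.

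The key step uses the definition of $\mathcal{A}^\e$ to linearize the small increments of $f$. Set
\[
F^\delta(t):=\frac{\mathbb{E}^\e_t[f(t+\delta)]-f(t)}{\delta}-g(t),
\]
so that the hypothesis $f\in\mathcal{D}(\mathcal{A}^\e)$ reads $p-\lim_{\delta\to 0}F^\delta=0$: namely $\sup_{\delta,\,t\leq T}\mathbb{E}[|F^\delta(t)|]<+\infty$ and $\mathbb{E}[|F^\delta(t)|]\to 0$ for each $t$. Then I would introduce the uniform partition $s_i=s+i\delta_n$ with $\delta_n=(t-s)/n$ and telescope $f(t)-f(s)=\sum_{i=0}^{n-1}(f(s_{i+1})-f(s_i))$. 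Since $Z$ is $\mathcal{F}^\e_s\subset\mathcal{F}^\e_{s_i}$-measurable, the tower property and the definition of $F^{\delta_n}$ yield
\[
\mathbb{E}[Z(f(t)-f(s))]=\sum_{i=0}^{n-1}\mathbb{E}\bigl[Z\bigl(\mathbb{E}^\e_{s_i}[f(s_i+\delta_n)]-f(s_i)\bigr)\bigr]=\delta_n\sum_{i=0}^{n-1}\mathbb{E}[Zg(s_i)]+\delta_n\sum_{i=0}^{n-1}\mathbb{E}[ZF^{\delta_n}(s_i)].
\]

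The last step is to let $n\to\infty$. The first Riemann sum converges to $\mathbb{E}\bigl[Z\int_s^t g(u)\,du\bigr]$ by Fubini and dominated convergence, using $\sup_{u\leq T}\mathbb{E}[|g(u)|]<+\infty$ and boundedness of $Z$. The main obstacle lies in controlling the error sum $\delta_n\sum_i\mathbb{E}[ZF^{\delta_n}(s_i)]$: the $p-\lim$ hypothesis only gives pointwise (not uniform in $t$) $L^1$-convergence $\mathbb{E}[|F^{\delta_n}(t)|]\to 0$, so one cannot crudely bound the sum by $\sup_i\mathbb{E}[|F^{\delta_n}(s_i)|]$. The standard workaround is to view the error as the Riemann approximation of $\|Z\|_\infty\int_s^t\mathbb{E}[|F^{\delta_n}(u)|]\,du$: the integrand is uniformly bounded in $(\delta_n,u)$ and converges to $0$ pointwise, so dominated convergence kills the integral and hence the error sum. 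The resulting identity is exactly $\mathbb{E}[Z(M_f^\e(t)-M_f^\e(s))]=0$ for every bounded $\mathcal{F}^\e_s$-measurable $Z$, proving the martingale property.
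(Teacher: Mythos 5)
The paper itself contains no proof of this theorem: it is recalled verbatim from Kushner's book (the reference \cite{kushner}, Section 2.2), so your attempt can only be compared with the standard argument of Kurtz--Kushner. Your reduction of the martingale property to the identity $\mathbb{E}[Z(f(t)-f(s))]=\mathbb{E}[Z\int_s^t g(u)\,du]$ for bounded $\mathcal{F}^\e_s$-measurable $Z$, and your telescoping identity, exact for every $n$, are both correct. The genuine gap is in the passage $n\to\infty$, and it affects \emph{both} sums for the same reason: the maps $u\mapsto\mathbb{E}[Zg(u)]$ and $u\mapsto\mathbb{E}[\lvert F^{\delta_n}(u)\rvert]$ are merely bounded measurable functions of $u$. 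Nothing in the definition of $\mathcal{M}^\e$ or of $p-\lim$ gives them any continuity (in particular $F^{\delta}(u)$ involves the conditional expectation $\mathbb{E}^\e_u[f(u+\delta)]$, which is as irregular in $u$ as the filtration allows), and a Riemann sum of a bounded measurable function along a fixed mesh need not converge to its Lebesgue integral (think of the indicator of the rationals: its uniform-mesh Riemann sums equal $t-s$ while its integral vanishes). So the convergence of the first sum does not follow from ``Fubini and dominated convergence''. The treatment of the error sum is moreover circular: dominated convergence does give $\int_s^t\mathbb{E}[\lvert F^{\delta_n}(u)\rvert]\,du\to0$, but your error term is the Riemann sum, not the integral, and since the integrand itself changes with $n$, no equicontinuity is available to transfer the conclusion from the integral to the sum; the words ``and hence the error sum'' are exactly the missing step.

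The missing idea---and the way the result is actually proved---is to average in time \emph{before} taking any limit, so that only honest Lebesgue integrals appear and point evaluations at mesh points never occur. For instance, set $H(u):=\mathbb{E}[Zf(u)]$. For $s\le u<v\le T$, the tower property and the definition of $F^{v-u}(u)$ give
\[
\lvert H(v)-H(u)\rvert=\bigl\lvert\mathbb{E}\bigl[Z\bigl(\mathbb{E}^\e_u[f(v)]-f(u)\bigr)\bigr]\bigr\rvert\le \|Z\|_\infty\,(v-u)\Bigl(\sup_{r\le T}\mathbb{E}[\lvert g(r)\rvert]+\sup_{\delta>0,\,r\le T}\mathbb{E}[\lvert F^{\delta}(r)\rvert]\Bigr),
\]
and both suprema are finite precisely by the definitions of $\mathcal{M}^\e$ and of $p-\lim$; hence $H$ is Lipschitz. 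The same identity with $v=u+\delta$ shows that $H$ admits at every $u$ a right derivative equal to $\mathbb{E}[Zg(u)]$, because $\lvert\mathbb{E}[ZF^\delta(u)]\rvert\le\|Z\|_\infty\mathbb{E}[\lvert F^\delta(u)\rvert]\to0$. A Lipschitz function is absolutely continuous and its a.e.\ derivative agrees with its everywhere-defined right derivative, so $H(t)-H(s)=\int_s^t\mathbb{E}[Zg(u)]\,du$; Fubini (this is where the implicit joint measurability of $g$ enters, as it must for $\int_0^t\mathcal{A}^\e f(u)\,du$ to be adapted) converts this into the desired identity. Equivalently, one may start from $\frac1h\int_s^t(f(u+h)-f(u))\,du=\frac1h\int_t^{t+h}f(u)\,du-\frac1h\int_s^{s+h}f(u)\,du$, apply $\mathbb{E}[Z\,\cdot\,]$, and let $h\to0$: the error is then $\int_s^t\mathbb{E}[ZF^h(u)]\,du$, an integral to which dominated convergence legitimately applies. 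Your telescoping skeleton can be salvaged only after such a regularity statement is established; as written, both limit interchanges are unsupported.
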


\subsection{Tightness}\label{tightnesssec}

In this section we prove the tightness of the family $(W_\e)_{\e \in (0,1)}$ on the polish space $\mathcal{C}([0,+\infty),(\mathcal{B}_{r},d_{\mathcal{B}_{r}}))$ according to the following criteria. Following  \cite[Section 7 pp. 80]{billingsley} and \cite[Theorem 4.9 pp. 62, Theorem 4.10 pp. 63]{Karatzas}, we have the following version of the Arzelà-Ascoli theorem for processes with values in a complete separable metric space. 
\begin{thm}
A set $B\subset \mathcal{C}([0,+\infty),(\mathcal{B}_{r},d_{\mathcal{B}_{r}} ))$ has a compact closure if and only if
\[\sup_{g\in B}d_{\mathcal{B}_r}( g(0),0) <+\infty,\quad\text{and}\quad \forall T>0, \quad \lim_{\eta \to 0}\sup_{g \in B} m_T (g, \eta)=0,  \]
with 
\[m_T (g,\eta)=\sup_{\substack{(s,t)\in [0,T]^2\\ \lvert t-s \rvert \leq\eta}} d_{\mathcal{B}_{r}}( g(s) ,g(t)). \] 
\end{thm}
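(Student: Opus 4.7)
The plan is to reduce the statement to the classical Arzel\`a--Ascoli theorem applied on each finite interval $[0,n]$, and then stitch the restrictions together via a Cantor diagonal extraction to handle the noncompact time domain $[0,+\infty)$.

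The preliminary observation is that $(\mathcal{B}_r, d_{\mathcal{B}_r})$ is a compact metric space: $\mathcal{B}_r$ is weakly compact in $L^2(\mathbb{R}^{2d})$ by Banach--Alaoglu (since $L^2$ is reflexive), and the paper has already noted that $d_{\mathcal{B}_r}$ metrizes the weak topology restricted to $\mathcal{B}_r$. In particular $d_{\mathcal{B}_r}$ is bounded on $\mathcal{B}_r$, so the first condition $\sup_{g\in B} d_{\mathcal{B}_r}(g(0),0)<+\infty$ is automatic and is only stated for symmetry with the classical formulation on noncompact target spaces. The topology on $\mathcal{C}([0,+\infty),(\mathcal{B}_r,d_{\mathcal{B}_r}))$ is the topology of uniform convergence on compact sets, induced by the metric
\[D(f,g)=\sum_{n\geq 1} 2^{-n}\bigl(1\wedge \sup_{t\in[0,n]}d_{\mathcal{B}_r}(f(t),g(t))\bigr),\]
and a subset is relatively compact for $D$ if and only if its restriction to $[0,n]$ is relatively compact in $\mathcal{C}([0,n],\mathcal{B}_r)$ for every integer $n\geq 1$.

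For the sufficiency direction, I would fix $n$ and restrict $B$ to $[0,n]$. Pointwise relative compactness is automatic since all values lie in the compact set $\mathcal{B}_r$, and equicontinuity on $[0,n]$ is precisely the uniform modulus-of-continuity hypothesis $\lim_{\eta\to 0}\sup_{g\in B}m_n(g,\eta)=0$. The classical Arzel\`a--Ascoli theorem for continuous maps from a compact metric space into a compact metric space then yields relative compactness of $B|_{[0,n]}$ in $\mathcal{C}([0,n],\mathcal{B}_r)$. Extracting from any sequence $(g_j)\subset B$ a subsequence converging uniformly on $[0,1]$, then a sub-subsequence converging uniformly on $[0,2]$, and so on, the diagonal sequence converges uniformly on every $[0,n]$, hence in the metric $D$.

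For the necessity direction, I would argue by contradiction. Suppose $B$ has compact closure in $\mathcal{C}([0,+\infty),(\mathcal{B}_r,d_{\mathcal{B}_r}))$ but the modulus condition fails for some $T>0$: there exist $\eta_0>0$, $g_j\in B$ and $s_j,t_j\in[0,T]$ with $|t_j-s_j|\to 0$ and $d_{\mathcal{B}_r}(g_j(s_j),g_j(t_j))\geq \eta_0$. By compactness of $\overline{B}$ I extract $g_{j_k}\to g$ uniformly on $[0,T]$, and by compactness of $[0,T]$ I may further assume $s_{j_k},t_{j_k}\to t^*$. The triangle inequality
\[d_{\mathcal{B}_r}(g_{j_k}(s_{j_k}),g_{j_k}(t_{j_k}))\leq d_{\mathcal{B}_r}(g_{j_k}(s_{j_k}),g(s_{j_k}))+d_{\mathcal{B}_r}(g(s_{j_k}),g(t_{j_k}))+d_{\mathcal{B}_r}(g(t_{j_k}),g_{j_k}(t_{j_k}))\]
sends all three terms to $0$ (the first and third by uniform convergence on $[0,T]$, the middle by continuity of the limit $g$ at $t^*$), contradicting $\eta_0>0$.

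The proof is entirely classical; there is no substantive obstacle. The only mildly technical points are the diagonal extraction for the infinite time domain and the standard three-term triangle decomposition used in the necessity direction, both of which are routine.
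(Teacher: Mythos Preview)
Your argument is correct and entirely standard. Note, however, that the paper does not actually prove this theorem: it is stated as a citation of the classical Arzel\`a--Ascoli theorem for functions with values in a complete separable metric space, with references to Billingsley and Karatzas--Shreve. So there is no ``paper's own proof'' to compare against; your writeup simply supplies the details that the paper delegates to those references, and the route you take---Arzel\`a--Ascoli on each $[0,n]$ followed by diagonal extraction for sufficiency, and a three-term triangle-inequality contradiction for necessity---is exactly the standard one found there. Your observation that the boundedness condition on $g(0)$ is automatic here (because $(\mathcal{B}_r,d_{\mathcal{B}_r})$ is compact, hence of finite diameter) is also correct and worth making explicit.
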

From this result, we obtain the classical tightness criterion. 
\begin{thm}
A family of probability measure $\big(\mathbb{P}^\e\big)_{\e\in(0,1)}$ on $\mathcal{C}([0,+\infty),(\mathcal{B}_{r},d_{\mathcal{B}_{r}} ))$ is tight if and only if
\[\forall\eta'>0,\quad\lim_{M\to 0}\sup_{\e\in(0,1)}\mathbb{P}^\e \big(g\,; d_{\mathcal{B}_r}( g(0),0) >M\big)=0,\]
and
\[\forall T>0, \eta'>0 \quad \lim_{\eta \to 0}\sup_{\e \in (0,1) }\mathbb{P}^\e \big(g\,; \,\,m_T (g, \eta)>\eta'\big)=0.\]
\end{thm}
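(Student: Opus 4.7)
This is the classical Prokhorov-type characterization of tightness in a Polish space of continuous paths, and the natural plan is to combine Prokhorov's theorem with the Arzel\`a--Ascoli characterization stated just above. Note first that $(\mathcal{B}_r,d_{\mathcal{B}_r})$ is a compact metric space (the weak topology on a bounded ball in $L^2$ is metrizable by $d_{\mathcal{B}_r}$), so $\mathcal{C}([0,+\infty),(\mathcal{B}_r,d_{\mathcal{B}_r}))$ equipped with the topology of uniform convergence on compact intervals is itself a Polish space. Prokhorov's theorem therefore applies and reduces the problem to the following equivalence: $(\mathbb{P}^\e)_\e$ is tight iff for every $\delta>0$ there exists a relatively compact set $K_\delta$ such that $\sup_\e \mathbb{P}^\e(K_\delta^c)<\delta$. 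What remains is to translate the relative compactness of $K_\delta$ (via the Arzel\`a--Ascoli theorem recalled above) into the two quantitative conditions in the statement.

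For the \emph{necessary} direction, assume $(\mathbb{P}^\e)_\e$ is tight and fix $\eta'>0$. Choose a compact set $K$ with $\sup_\e \mathbb{P}^\e(K^c)<\eta'$. By Arzel\`a--Ascoli we have $M:=\sup_{g\in K} d_{\mathcal{B}_r}(g(0),0)<+\infty$ and, for each fixed $T>0$, $\lim_{\eta\to 0}\sup_{g\in K} m_T(g,\eta)=0$. Consequently, for any $M'\geq M$ and for $\eta$ small enough (depending on $T,\eta'$) we have $\{g : d_{\mathcal{B}_r}(g(0),0)>M'\}\subset K^c$ and $\{g : m_T(g,\eta)>\eta'\}\subset K^c$, so both supremum-probabilities are bounded by $\eta'$, yielding both required limits.

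For the \emph{sufficient} direction, assume both conditions hold and fix $\delta>0$. Using the first condition pick $M$ with $\sup_\e\mathbb{P}^\e(d_{\mathcal{B}_r}(g(0),0)>M)<\delta/2$; using the second condition, applied with $T=n$ and threshold $\eta'=1/n$, pick for each $n\geq 1$ a modulus $\eta_n>0$ such that $\sup_\e\mathbb{P}^\e(m_n(g,\eta_n)>1/n)<\delta/2^{n+1}$. Then define
\[
K_\delta=\Bigl\{g\in \mathcal{C}([0,+\infty),(\mathcal{B}_r,d_{\mathcal{B}_r}))\,;\,d_{\mathcal{B}_r}(g(0),0)\leq M\text{ and }m_n(g,\eta_n)\leq 1/n\text{ for every }n\geq 1\Bigr\}.
\]
The set $K_\delta$ is closed and satisfies the two Arzel\`a--Ascoli conditions on every bounded interval, hence is compact; a union bound gives $\sup_\e \mathbb{P}^\e(K_\delta^c)<\delta$, and tightness follows from Prokhorov.

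Since $(\mathcal{B}_r,d_{\mathcal{B}_r})$ is already compact the first condition in the statement is in fact automatic, so the only genuinely non-trivial content is the second (temporal equicontinuity) condition; the main --- quite mild --- technical point is to handle the non-compact time interval $[0,+\infty)$ by diagonalizing over the countable family $\{T=n,\, n\geq 1\}$ when building $K_\delta$, which is what the choice of $\eta_n$ above achieves. No step poses a real obstacle: the result is a direct adaptation of the standard Billingsley/Karatzas--Shreve argument to the Polish target space $(\mathcal{B}_r,d_{\mathcal{B}_r})$.
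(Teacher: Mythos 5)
Your proof is correct and coincides with the paper's own treatment: the paper offers no proof of this theorem at all, presenting it as the classical consequence of the Arzel\`a--Ascoli characterization stated just before it (citing Billingsley and Karatzas--Shreve), and your argument is precisely that standard two-directional translation, including the correct diagonalization over $T=n$, $\eta'=1/n$ to handle the unbounded time interval. Two cosmetic remarks only: what you invoke as ``Prokhorov's theorem'' is really just the definition of tightness of a family of measures (Prokhorov proper concerns the equivalence with relative compactness in the weak topology on measures), and the paper's $\lim_{M\to 0}$ in the first condition is evidently a typo for $\lim_{M\to +\infty}$ --- your reading, under which that condition is automatic since $(\mathcal{B}_r,d_{\mathcal{B}_r})$ has finite diameter, is the intended one.
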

From the definition \eqref{defmetric} of the metric $d_{\mathcal{B}_{r}}$, the tightness criterion becomes the following.
\begin{thm}\label{crit}
A family of processes $(X^\e)_{\e\in(0,1)}$ is tight on $\mathcal{C}([0,+\infty),(\mathcal{B}_{r},d_{\mathcal{B}_{r}} ))$ if and only if the process $\big(\big<X^\e,\lambda\big>_{L^2(\mathbb{R}^{2d})}\big)_{\e\in(0,1)}$ is tight on $\mathcal{C}([0,+\infty),\mathbb{R})$ for all $\lambda \in L^2(\mathbb{R}^{2d})$.
\end{thm}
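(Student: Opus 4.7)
The plan is to verify the two directions of the equivalence using the previous tightness criterion, exploiting the fact that the metric $d_{\mathcal{B}_r}$ linearly combines the evaluations $\langle \cdot , g_n\rangle_{L^2}$ against a dense countable family in $\mathcal{B}_r$.

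For the easy direction, suppose $(X^\e)$ is tight on $\mathcal{C}([0,+\infty),(\mathcal{B}_{r},d_{\mathcal{B}_{r}}))$. For any fixed $\lambda\in L^2(\mathbb{R}^{2d})$, the map $\mu\mapsto\langle\mu,\lambda\rangle_{L^2}$ is continuous from $(\mathcal{B}_r,d_{\mathcal{B}_r})$ to $\mathbb{R}$, since $d_{\mathcal{B}_r}$ metrizes the weak topology on $\mathcal{B}_r$; explicitly, any $\lambda$ can be approximated in $L^2$ by elements of the dense family $\{g_n\}$, and $|\langle \mu-\nu,\lambda\rangle_{L^2}|$ is controlled by $|\langle \mu-\nu,g_n\rangle_{L^2}|+2r\|\lambda-g_n\|_{L^2}$, with the first term small in $d_{\mathcal{B}_r}$. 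This continuous map lifts to a continuous map between the path spaces, and the continuous image of a tight family is tight.

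For the nontrivial direction, I would apply the criterion from the previous theorem. The first condition is automatic: since $\|g_j\|_{L^2}\leq r$ and $X^\e(t)\in\mathcal{B}_r$, we have $d_{\mathcal{B}_r}(X^\e(0),0)=\sum_j 2^{-j}|\langle X^\e(0),g_j\rangle_{L^2}|\leq r^2$, which is deterministic and finite. For the modulus of continuity condition, the key is the triangle-style decomposition, for any integer $N\geq 1$,
\begin{equation*}
d_{\mathcal{B}_r}(X^\e(s),X^\e(t))=\sum_{j=1}^N 2^{-j}\big|\langle X^\e(t)-X^\e(s),g_j\rangle_{L^2}\big|+\sum_{j>N}2^{-j}\big|\langle X^\e(t)-X^\e(s),g_j\rangle_{L^2}\big|,
\end{equation*}
where the tail is bounded uniformly in $\e,s,t$ by $2^{1-N}r^2$ using Cauchy--Schwarz. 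Given $\eta'>0$, choose $N$ large enough that the tail is less than $\eta'/2$. Then
\begin{equation*}
\mathbb{P}^\e\big(m_T(X^\e,\eta)>\eta'\big)\leq \sum_{j=1}^N \mathbb{P}^\e\Big(\sup_{\substack{s,t\in[0,T]\\|t-s|\leq\eta}} \big|\langle X^\e(t)-X^\e(s),g_j\rangle_{L^2}\big|>\eta'/(2N)\Big),
\end{equation*}
and each of the $N$ terms on the right goes to zero (uniformly in $\e$) as $\eta\to 0$ by the assumed tightness of $(\langle X^\e,g_j\rangle_{L^2})_\e$ in $\mathcal{C}([0,+\infty),\mathbb{R})$, applied to $\lambda=g_j$.

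The main subtlety, and where care is needed, is the interchange of the two limits: first fixing $N$ to truncate the tail below $\eta'/2$, then using tightness in $\mathbb{R}$ for each of the finitely many coordinates. This is exactly what makes the dense countable family $\{g_n\}$ indispensable: without it, one could not reduce the uniform control over all $\lambda\in L^2$ to a finite number of test functions for each prescribed tolerance. The converse implication from tightness in $\mathcal{C}([0,+\infty),\mathbb{R})$ for every $\lambda\in L^2$ down to just the countable family $\{g_n\}$ is of course trivial, so the quantifier ``for all $\lambda$'' in the statement is in fact equivalent to ``for all $\lambda$ in the countable family'' (or indeed any dense subset), which is how the criterion is used in practice in the subsequent proofs.
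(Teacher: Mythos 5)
Your proof is correct and follows exactly the route the paper intends: the paper states Theorem \ref{crit} as an immediate consequence of the definition \eqref{defmetric} of $d_{\mathcal{B}_r}$, and your argument (continuity of $\mu\mapsto\big<\mu,\lambda\big>_{L^2(\mathbb{R}^{2d})}$ on $(\mathcal{B}_r,d_{\mathcal{B}_r})$ for one direction; truncation of the series, the uniform tail bound $2^{1-N}r^2$, and a union bound over the finitely many coordinates $g_j$ for the other) is precisely the standard detail the paper leaves implicit.
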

This last theorem looks like the tightness criteria of Mitoma and Fouque \cite{mitoma,fouque}. For any $\lambda \in L^2(\mathbb{R}^{2d})$, we set 
\[W_{\e,\lambda}(t)=\big<W_{\e}(t),\lambda\big>_{L^2(\mathbb{R}^{2d})}.\]
According to Theorem \ref{crit}, the family $(W_{\e})_\e$ is tight on $\mathcal{C}([0,+\infty), (\mathcal{B}_{r},d_{\mathcal{B}_{r}} ))$ if and only if the family $(W_{\e,\lambda})_\e$ is tight on $\mathcal{C}([0,+\infty),\mathbb{R} )$,  for all $\lambda \in L^2(\mathbb{R}^{2d})$. Furthermore, $(W_\e)_{\e}$ is a family of continuous processes. Then, according to \cite[Theorem 13.4 pp. 142]{billingsley}, it suffices to prove that for all $\lambda \in L^2(\mathbb{R}^{2d})$, the family $(W_{\e,\lambda})_{\e}$ is tight on $\mathcal{D}([0,+\infty),\mathbb{R} )$, which is the set of cad-lag functions with values in $\mathbb{R}$. Finally, using that the process $W_\e$ takes its values in $\mathcal{B}_r$ and that the set of all smooth functions with compact support in $\mathbb{R}^{2d}$, denoted by $\mathcal{C}^{\infty}_0(\mathbb{R}^{2d})$, is dense in $L^2(\mathbb{R}^{2d})$, it is sufficient to show that $(W_{\e,\lambda})_{\e}$ is tight on $\mathcal{D}([0,+\infty),\mathbb{R} )$ for all $\lambda \in \mathcal{C}^{\infty}_0(\mathbb{R}^{2d})$.

\begin{prop}\label{tightness}
For all $\lambda \in\mathcal{C}^{\infty}_0(\mathbb{R}^{2d})$, the family $(W_{\e,\lambda})_{\e\in(0,1)}$ is tight on $\mathcal{D}\left([0,+\infty),\mathbb{R} \right)$.
\end{prop}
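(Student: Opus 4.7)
The plan is to verify tightness of $(W_{\epsilon,\lambda})_\epsilon$ by the Kurtz--Kushner perturbed-test-function method described in Section \ref{pseudogene}. Since $W_\epsilon(t) \in \mathcal{B}_r$, the uniform bound $|W_{\epsilon,\lambda}(t)| \leq r \|\lambda\|_{L^2(\mathbb{R}^{2d})}$ handles the compact-containment condition at each fixed time. The real task is to build a perturbed test function
\[f^\epsilon_\lambda(t) = W_{\epsilon,\lambda}(t) + f_1^\epsilon(t) + f_2^\epsilon(t)\]
satisfying $p-\lim_\epsilon (f^\epsilon_\lambda - W_{\epsilon,\lambda}) = 0$ and $\sup_{\epsilon,\,t \leq T} \mathbb{E}[|\mathcal{A}^\epsilon f^\epsilon_\lambda(t)|] < \infty$. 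Once this is done, Theorem \ref{martingale} produces a semimartingale decomposition $f^\epsilon_\lambda(t) = M^\epsilon_{f^\epsilon_\lambda}(t) + \int_0^t \mathcal{A}^\epsilon f^\epsilon_\lambda(u)\,du$ with uniformly bounded drift. An Aldous--Rebolledo type argument (control of the quadratic variation of the martingale part plus the modulus of continuity of the drift over small intervals) then yields tightness of $(f^\epsilon_\lambda)_\epsilon$ on $\mathcal{D}([0,+\infty), \mathbb{R})$, and tightness of $(W_{\epsilon,\lambda})_\epsilon$ follows since the perturbation is $p$-small.

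For the explicit construction of the correctors, I start from
\[\mathcal{A}^\epsilon W_{\epsilon,\lambda}(t) = \epsilon^{s_c}\bigl\langle W_\epsilon(t), \xt{k}\cdot\nabla_{\xt{x}}\lambda\bigr\rangle_{L^2(\mathbb{R}^{2d})} + \epsilon^{(1-\gamma)/2 - s}\bigl\langle W_\epsilon(t), \mathcal{L}_\epsilon\lambda(t)\bigr\rangle_{L^2(\mathbb{R}^{2d})},\]
whose second term is singular since $(1-\gamma)/2 - s < 0$ throughout the regime $s > 1/(2\kappa_\gamma)$. Exploiting the Ornstein--Uhlenbeck structure of $\widehat{V}$ from Proposition \ref{propmar}, I set
\[f_1^\epsilon(t) = \epsilon^{(1+\gamma)/2}\bigl\langle W_\epsilon(t), \widetilde{\mathcal{L}}_\epsilon\lambda(t)\bigr\rangle_{L^2(\mathbb{R}^{2d})},\]
where $\widetilde{\mathcal{L}}_\epsilon$ is obtained from $\mathcal{L}_\epsilon$ in \eqref{lepsilon} by inserting a factor $1/\mathfrak{g}(\xt{p})$ in the integral against $\widehat{V}(t/\epsilon^{s+\gamma},d\xt{p})$. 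The prefactor $\epsilon^{(1+\gamma)/2}$ ensures $p-\lim_\epsilon f_1^\epsilon = 0$, and by \eqref{markovesp} the action of the rescaled $\widehat{V}$-generator on $\widetilde{\mathcal{L}}_\epsilon$ produces exactly $-\epsilon^{(1-\gamma)/2 - s}\langle W_\epsilon(t), \mathcal{L}_\epsilon\lambda(t)\rangle_{L^2(\mathbb{R}^{2d})}$, cancelling the singular term in $\mathcal{A}^\epsilon W_{\epsilon,\lambda}$. What survives in $\mathcal{A}^\epsilon(W_{\epsilon,\lambda}+f_1^\epsilon)$ is a quadratic-in-$\widehat{V}$ contribution arising when the generator acts through $W_\epsilon$ inside $f_1^\epsilon$, which by \eqref{markovvar} decomposes into a deterministic $O(1)$ piece plus a zero-mean fluctuation of order $\epsilon^{s+\gamma}$; the fluctuation is absorbed by a second corrector $f_2^\epsilon$ of the same order, constructed in the same spirit. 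A small leftover of order $\epsilon^{s_c+(1+\gamma)/2}$ comes from the transport acting on $f_1^\epsilon$ and is negligible.

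The main obstacle is proving that the deterministic part of the centered quadratic term is bounded uniformly in $\epsilon$, and this is precisely where the scaling \eqref{sc} is forced. Schematically, after the change of variables $\xt{p} \to \epsilon^{s_c}\xt{p}$, the contribution takes the form
\[\epsilon^{1-s-\theta s_c}\int d\xt{x}\,d\xt{k}\,d\xt{p}\,\frac{a(\epsilon^{s_c}\xt{p})}{|\xt{p}|^{d+\theta}}\,W_\epsilon(t,\xt{x},\xt{k})\,\bigl(\lambda(\xt{x},\xt{k}+\tfrac{\xt{p}}{2}) - \lambda(\xt{x},\xt{k}-\tfrac{\xt{p}}{2})\bigr)^2,\]
where the exponent $1-s-\theta s_c$ vanishes exactly when $s_c = (1-s)/\theta$, matching \eqref{sc}. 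Finiteness of the resulting integral for $\theta \in (0,1)$ and $\lambda \in \mathcal{C}_0^\infty(\mathbb{R}^{2d})$ is guaranteed because the squared difference vanishes quadratically in $\xt{p}$ near the origin, which dominates the $|\xt{p}|^{-(d+\theta)}$ singularity, while the rapid decay of $a$ and the fact that $W_\epsilon(t)$ lies in $\mathcal{B}_r$ control the tail. The long-range correlation assumption \eqref{SDCA} is exactly what ensures the rescaling produces a finite, nontrivial leading coefficient rather than one that vanishes with $\epsilon$; with these perturbations in hand, $\mathcal{A}^\epsilon f^\epsilon_\lambda$ is uniformly bounded in $L^1$, and the tightness follows.
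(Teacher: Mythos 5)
Your overall plan (pseudogenerator, a first corrector exploiting the Ornstein--Uhlenbeck structure of $\widehat{V}$ via Proposition \ref{propmar}, and the exponent bookkeeping that forces $s_c=(1-s)/\theta$) is the same skeleton as the paper's proof, but there are two genuine gaps. The first is in the corrector itself: inserting only the factor $1/\mathfrak{g}(\xt{p})$ into \eqref{lepsilon} cancels the OU decay but ignores the fast spatial phase. Your $\widetilde{\mathcal{L}}_\e\lambda$ still contains $e^{i\xt{p}\cdot\xt{x}/\e^s}$, so when the transport part of the dynamics acts on $f_1^\e$, the operator $\e^{s_c}\xt{k}\cdot\nabla_{\xt{x}}$ hits that phase and produces a term of size $\e^{(1+\ga)/2+s_c-s}\,\lvert\xt{k}\cdot\xt{p}\rvert/\mathfrak{g}(\xt{p})$, not the $O(\e^{s_c+(1+\ga)/2})$ you claim (your order accounts only for the derivative falling on the slow variable of $\lambda$). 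Estimating its $L^2(\mathbb{R}^{2d})$ norm as in Lemma \ref{bound3}, using $\widehat{R}_0(\xt{p})\lvert\xt{p}\rvert^2/\mathfrak{g}^2(\xt{p})\sim a(\xt{p})\lvert\xt{p}\rvert^{-(d+\theta+2\beta-2)}$ and the change of variables $\xt{p}=\e^{s_c}\xt{p}'$, gives a contribution of order $\e^{(1+\ga)/2-s}$, which is unbounded whenever $s>(1+\ga)/2$ --- in particular for $s$ close to $1$, well inside the range of Theorem \ref{thasymptotic} (and for $\ga=0$ on the whole range, since then $1/(2\kappa_0)>1/2$). So with your corrector the uniform integrability of $\mathcal{A}^\e f^\e_\lambda$ fails. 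The paper avoids this by defining $f_1^\e$ as $\int_t^{+\infty}\E^\e_t[\cdots e^{i(u-t)\xt{p}\cdot\xt{k}/\e^s}\cdots]\,du$, i.e.\ the conditional expectation of the future random term transported along the free flow; after \eqref{markovesp} the time integral produces the resolvent denominator $\mathfrak{g}(\xt{p})-i\e^{\ga}\xt{k}\cdot\xt{p}$ (the operator $\mathcal{L}_{1,\e}$), so that OU decay and transport phase are handled simultaneously and only the innocuous terms listed in the proof of Lemma \ref{A1} survive.

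The second gap is the tightness criterion. You perturb only the identity test function and then invoke an ``Aldous--Rebolledo type argument,'' asserting control of the quadratic variation of the martingale part; but nothing in your two hypotheses ($p$-smallness of the correctors and a uniform $L^1$ bound on the drift) yields that control. A uniformly bounded continuous process with uniformly integrable drift can fail to be tight because its martingale part oscillates arbitrarily fast: for instance $M^\e(t)=B(t/\e\wedge\tau)$, with $B$ a Brownian motion and $\tau$ its exit time from $[-1,1]$, is a bounded continuous martingale with zero drift, yet $(M^\e)_\e$ is not tight on $\mathcal{D}([0,+\infty),\mathbb{R})$. The missing second-moment information is exactly what one gets by running the construction for a general bounded smooth $f$, which is what the paper does: it sets $f_0^\e(t)=f(W_{\e,\lambda}(t))$, and the $f''$ terms in $\mathcal{A}^\e(f_0^\e+f_1^\e)$, such as $\e^{1-s}f''(W_{\e,\lambda})\big<W_\e,\mathcal{L}_\e\lambda\big>_{L^2(\mathbb{R}^{2d})}\big<W_\e,\mathcal{L}_{1,\e}\lambda\big>_{L^2(\mathbb{R}^{2d})}$ bounded through Lemma \ref{bound4}, encode precisely the conditional variance of the increments; Kushner's theorem \cite[Theorem 4 pp.~48]{kushner} then gives tightness directly, with no separate Aldous step. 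A minor remark in the opposite direction: your second corrector $f_2^\e$ is superfluous here --- centering the quadratic term is not needed for tightness (uniform integrability suffices), and the paper introduces $f_2^\e$ only in Section \ref{identificationsec} to identify the limit.
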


\begin{proof}[of Proposition \ref{tightness}]

The proof of Proposition \ref{tightness} consists in applying \cite[Theorem 4 pp. 48]{kushner} which is based on the perturbed-test-function technique . Throughout the proof Proposition \ref{tightness}, let $\lambda \in \mathcal{C}^{\infty}_0(\mathbb{R}^{2d})$, $f$ be a bounded smooth function, and $f_0 ^\e (t)=f(W_{\e,\lambda} (t))$. The pseudogenerator  
\begin{equation}\label{Aef0}
\mathcal{A}^\e f_0 ^\e (t)= f'(W_{\e,\lambda} (t))\Big[\e^{s_c}W_{\e,\lambda_1}(t)+\e^{(1-\ga)/2-s}\big<W_\e(t),\mathcal{L}_\e\lambda(t)\big>_{L^2(\mathbb{R}^{2d})}\Big],
\end{equation}
where $\mathcal{L}_\e\lambda(t,\xt{x},\xt{k})$ is defined by \eqref{lepsilon}, and 
\[
\lambda_1(\xt{x},\xt{k})=\xt{k}\cdot\nabla_{\xt{x}}\lambda(\xt{x},\xt{k}),
\]
is well defined since 
\[\sup_{t\geq0}\E\big[\|\mathcal{L}_\e\lambda(t)\|^2_{L^2(\mathbb{R}^{2d})}\big]<+\infty.\]

Let us remark that \eqref{Aef0} blows up as $\e$ goes to $0$ since $s>1/2$. The goal of the perturbed-test-function method is to modify the test function $f^\e_0$ using small corrector, so that the pseudogenerator applied to the test function with its corrector does not blow up anymore (see Lemma \ref{A1}). Let us introduce the first corrector 
\[
\begin{split}
f^\e _1 (t)=&\e^{(1-\ga)/2-s} f'(W_{\e,\lambda} (t))\int d\xt{x}d\xt{k}W_\e(t,\xt{x},\xt{k})\\
&\hspace{0.5cm}\times\int_{t}^{+\infty} \mathbb{E}^\e _t\Big[ \int\frac{\widehat{V}\big(\frac{u}{\e^{s+\ga}},d\xt{p}\big)}{(2\pi)^d i}e^{i(u-t)\xt{p}\cdot \xt{k}/\e^s}e^{i\xt{p}\cdot\xt{x}/\e^s}\times\Big(\lambda\big(\xt{x},\xt{k}-\frac{\xt{p}}{2\e^{s_c}}\big)-\lambda\big(\xt{x},\xt{k}+\frac{\xt{p}}{2\e^{s_c}}\big)\Big)\Big] du.
\end{split}\]
To be able to apply \cite[Theorem 4 pp. 48]{kushner} and then prove Proposition \ref{tightness}, we have to show the two following lemmas. 
\begin{lem}\label{bound2}  $\forall T>0$, and $\eta>0$
\[\lim_{\e} \Pro\Big(\sup_{0\leq t \leq T} \lvert f^\e _1 (t)\rvert>\eta\Big) =0,\quad \text{and} \quad\lim_\e\sup_{t\geq 0}\mathbb{E}\left[\lvert f^\e _1 (t) \rvert \right]=0.\]
\end{lem}
\begin{lem}\label{A1}
$\forall T>0$, $\big\{\mathcal{A}^\e \big(f^\e _0 +f^\e _1\big)(t), \e \in(0,1), 0\leq t\leq T\big\}$ is uniformly integrable.
\end{lem}
The remaining of this section consists in proving Lemma \ref{bound2} and Lemma \ref{A1}.

\begin{proof}[of Lemma \ref{bound2}]

Using \eqref{markovesp}, we have
\[f^{\e}_1(t)= \e^{\frac{1+ \ga}{2}}f'(W_{\e,\lambda} (t))\big<W_\e(t),\mathcal{L}_{1,\e}\lambda(t)\big>_{L^2(\mathbb{R}^{2d})}\]
with
\[
\mathcal{L}_{1,\e}\lambda(t,\xt{x},\xt{k})=\frac{1}{(2\pi)^d i}\int\frac{\widehat{V}\big(\frac{t}{\e^{s+\ga}},d\xt{p}\big)}{\mathfrak{g}(\xt{p})-i\e^{\ga}\xt{k}\cdot\xt{p}}e^{i\xt{p}\cdot\xt{x}/\e^s}\left(\lambda\big(\xt{x},\xt{k}-\frac{\xt{p}}{2\e^{s_c}}\big)-\lambda\big(\xt{x},\xt{k}+\frac{\xt{p}}{2\e^{s_c}}\big)\right).
\]
Let us note that the proof of Lemma \ref{bound2} is a direct consequence of the following lemma
\begin{lem}\label{bound3} We have
\[\lim_\e \e^{1+\ga} \E\Big[\sup_{t\in[0,T]}\|\mathcal{L}_{1,\e}\lambda(t)\|^2_{L^2(\mathbb{R}^{2d})} \Big]=0.\]
\end{lem}

\begin{proof}[of Lemma \ref{bound3}]

First,
\[ \|\mathcal{L}_{1,\e}\lambda(t)\|^2_{L^2(\mathbb{R}^{2d})}=\frac{1}{(2\pi)^{2d}} \int d\xt{x}d\xt{k} \Big\lvert \int \frac{\widehat{V}\big(\frac{t}{\e^{s+\ga}},d\xt{p}\big)}{\mathfrak{g}(\xt{p})-i\e^{\ga}\xt{k}\cdot\xt{p}}e^{i\xt{p}\cdot\xt{x}/\e^s} \Big(\lambda\big(\xt{x},\xt{k}-\frac{\xt{p}}{2\e^{s_c}}\big)-\lambda\big(\xt{x},\xt{k}+\frac{\xt{p}}{2\e^{s_c}}\big)\Big)  \Big\rvert^2. \]
Let us fixe $\xt{x}$, $\xt{k}$, and $u$, and let
\[\phi_{\lambda,\xt{x},\xt{k},u}(\xt{p})=\frac{1}{i}\frac{e^{i\xt{p}\cdot\xt{x}/\e}}{\mathfrak{g}(\xt{p})-i\e^{\ga}\xt{k}\cdot\xt{p}}\Big(\lambda\big(\xt{x},\xt{k}-\frac{\xt{p}}{2\e^{s_c}}\big)-\lambda\big(\xt{x},\xt{k}+\frac{\xt{p}}{2\e^{s_c}}\big)\Big).\]
According to \eqref{theta} and since $\lambda$ is a smooth function, we have $\phi_{\lambda,\xt{x},\xt{k},u}\in\mathcal{H}_m$. Consequently, $\tilde{V}=\big<\widehat{V},\phi_{\lambda,\xt{x},\xt{k},u}\big>_{\mathcal{H}'_m,\mathcal{H}_m}$ is real-valued zero-mean Gaussian process with a pseudo-metric $M$ on $[0,T]$ given by 
\[M(t_1,t_2)=\mathbb{E}\Big[\Big(\tilde{V}\Big(\frac{t_1}{\e^{s+\ga}}\Big) -\tilde{V}\Big(\frac{t_2}{\e^{s+\ga}}\Big)\Big)^2\Big]^{1/2}.\]
Then, for all $(t_1,t_2)\in[0,T]^2$
\begin{equation}\begin{split}\label{metre}
M^2(t_1,t_2)&= \E\Big[\tilde{V}^2\Big(\frac{t_1}{\e^{s+\ga}}\Big)\Big] +\E\Big[\tilde{V}^2\Big(\frac{t_2}{\e^{s+\ga}}\Big)\Big]- \E\Big[\tilde{V}\Big(\frac{t_1}{\e^{s+\ga}}\Big)\tilde{V}\Big(\frac{t_2}{\e^{s+\ga}}\Big)\Big]\\
&= \int m(d\xt{p}) \frac{2(1-e^{-\mathfrak{g}(\xt{p})\lvert t_1-t_2\rvert/\e^{s+\ga}})}{\mathfrak{g}^2(\xt{p})+\e^{2\ga}(\xt{k}\cdot\xt{p})^2}\Big(\lambda\big(\xt{x},\xt{k}-\frac{\xt{p}}{2\e^{s_c}}\big)-\lambda\big(\xt{x},\xt{k}+\frac{\xt{p}}{2\e^{s_c}}\big)\Big)^2\\
&\leq 2\frac{\lvert t_1-t_2\rvert}{\e^{s+\ga}} \int d\xt{p} \frac{a(\xt{p})}{\lvert \xt{p}\rvert^{d+\theta} }\Big(\lambda\big(\xt{x},\xt{k}-\frac{\xt{p}}{2\e^{s_c}}\big)-\lambda\big(\xt{x},\xt{k}+\frac{\xt{p}}{2\e^{s_c}}\big)\Big)^2\\
&\leq 2\frac{\lvert t_1-t_2\rvert}{\e^{s+\ga+\theta s_c}}\Big(\sup_{\xt{x},\xt{k}}\lvert \nabla_{\xt{k}}\lambda(\xt{x},\xt{k})\rvert ^2 \int_{\lvert \xt{p}\rvert<1} d\xt{p}\frac{1}{ \lvert \xt{p}\rvert ^{d+\theta-1}}+\sup_{\xt{x},\xt{k}}\lvert\lambda(\xt{x},\xt{k})\rvert ^2  \int_{\lvert \xt{p}\rvert>1} d\xt{p}\frac{1}{ \lvert \xt{p}\rvert ^{d+\theta}}  \Big),
\end{split}\end{equation}
and
\begin{equation}\label{diam}\begin{split}
diam^2_{M}([0,T]) & \leq 2\E\Big[\tilde{V}^2\Big(\frac{t_1}{\e^{s+\ga}}\Big)\Big] +2\E\Big[\tilde{V}^2\Big(\frac{t_2}{\e^{s+\ga}}\Big)\Big]  \\
&\leq \int m(d\xt{p}) \frac{2}{\mathfrak{g}^2(\xt{p})+\e^{2\ga}(\xt{k}\cdot\xt{p})^2}\Big(\lambda\big(\xt{x},\xt{k}-\frac{\xt{p}}{2\e^{s_c}}\big)-\lambda\big(\xt{x},\xt{k}+\frac{\xt{p}}{2\e^{s_c}}\big)\Big)^2\\
&\leq \frac{2}{\e^{(\theta+2\beta) s_c}}b^2(\xt{x},\xt{k}),
\end{split}\end{equation}
where
\[\begin{split}b^2(\xt{x},\xt{k})=& \, \int_{\lvert \xt{p}\rvert<1} d\xt{p}\frac{1}{ \lvert \xt{p}\rvert ^{d+\theta+2\beta-2}}
\int_{-1/2}^{1/2}du \lvert\nabla_\xt{x} \lambda(\xt{x},\xt{k}+u\xt{p})\rvert ^2\\
&+\int_{\lvert \xt{p}\rvert>1} d\xt{p}\frac{1}{ \lvert \xt{p}\rvert ^{d+\theta+2\beta}} \Big(\lambda\big(\xt{x},\xt{k}-\frac{\xt{p}}{2}\big)-\lambda\big(\xt{x},\xt{k}+\frac{\xt{p}}{2}\big)\Big)^2.
\end{split}\] 
Here, $diam_{M}([0,T])$ stands for the diameter of $[0,T]$ under the pseudo-metric $M$. Following the proof of \cite[Theorem 1.3.3 pp. 14]{adlertaylor}, and thanks to \eqref{metre} and \eqref{diam}, we have
\[\mathbb{E}\left[\sup_{t\in[0,T]}\Big\lvert \tilde{V}\Big(\frac{t}{\e^{s+\ga}}\Big) \Big\rvert^2 \right]\leq C_1\left(\int_0^{2b(\xt{x},\xt{k})/\e^{(\theta+2\beta) s_c/2}} \sqrt{\ln\left( C_2\frac{T}{r^2\e^{s+\ga+\theta s_c}}  \right)}dr\right)^2.\]
Consequently, 
\[\begin{split}
\e^{1+\ga}\E\big[\sup_{t\in[0,T]} \|\mathcal{L}_{1,\e}\lambda(t)\|^2_{L^2(\mathbb{R}^{2d})}\big]& \leq C  \int d\xt{x}d\xt{k} \Big(\int_0^{2\e^{(s+\ga-2\beta s_c)/2}b(\xt{x},\xt{k})} \sqrt{\ln\Big( C_2\frac{T}{r^2}  \Big)}dr\Big)^2\\
& \leq C \e^{(s+\ga-2\beta s_c)/2} \int d\xt{x}d\xt{k}b(\xt{x},\xt{k}) \int_0^1 \ln\Big( C_2\frac{T}{r^2}  \Big)dr  <+\infty,
\end{split}\]
which concludes the proof of Lemma \ref{bound3}, since $s_c=(1-s)/\theta$ and $s_c<s<(s+\ga)/(2\beta)$, and then the proof of Lemma \ref{bound2}.
$\square$
\end{proof}
$\square$
\end{proof}

\begin{proof}[of Lemma \ref{A1}]
The pseudogenerator $\mathcal{A}^\e \big(f^\e _1\big)(t)$ is given by 
\[\begin{split}
\mathcal{A}^\e \big(f^\e _1\big)(t)&=-\e^{(1-\ga)/2-s} f'(W_{\e,\lambda} (t))\big<W_\e(t),\mathcal{L}_\e\lambda(t)\big>_{L^2(\mathbb{R}^{2d})}\\
&+\e^{1-s}f'(W_{\e,\lambda} (t))\big<W_\e(t),\mathcal{L}_{\e}\big( \mathcal{L}_{1,\e}\lambda(t)\big)(t)\big>_{L^2(\mathbb{R}^{2d})}\\
 &+\e^{1-s}f''(W_{\e,\lambda} (t))\big<W_\e(t),\mathcal{L}_\e\lambda(t)\big>_{L^2(\mathbb{R}^{2d})}\big<W_\e(t),\mathcal{L}_{1,\e}\lambda(t)\big>_{L^2(\mathbb{R}^{2d})}\\
&+ \e^{(1+\ga)/2+s_c}f''(W_{\e,\lambda} (t))W_{\e,\lambda_1}(t) \big<W_\e(t),\mathcal{L}_{1,\e}\lambda(t)\big>_{L^2(\mathbb{R}^{2d})},
\end{split}\]
so that one can see that the corrector $f^\e _1$ remove the singular terms from $\mathcal{A}^\e \big(f^\e _0\big)(t)$ and we have the following result.
\begin{lem}\label{bound4}
We have
\[\sup_{t\leq T}\sup_{\e\in(0,1)}\e^{2(1-s)}\E\left[\|\mathcal{L}_{\e}\big( \mathcal{L}_{1,\e}\lambda(t)\big)(t)\|^2_{L^2(\mathbb{R}^{2d})} \right]<+\infty,\]
and
\[\sup_{t\leq T}\sup_{\e\in(0,1)}\e^{2(1-s)}\E\big[\|\mathcal{L}_{\e}\lambda(t)\|^2_{L^2(\mathbb{R}^{2d})} \times\|\mathcal{L}_{1,\e}\lambda(t)\|^2_{L^2(\mathbb{R}^{2d})}\big]<+\infty.\]
\end{lem}
Consequently, we obtain
\[\begin{split}
\mathcal{A}^\e(f^\e _0 +f^\e _1)(t)=&\e^{1-s} f'(W_{\e,\lambda}(t))\Big[W_{\e,\lambda_1}(t)+\big<W_\e(t),\mathcal{L}_{\e}\big(\mathcal{L}_{1,\e}\lambda(t)\big)(t)\big>_{L^2(\mathbb{R}^{2d})}\Big]\\
&+\e^{1-s}f''(W_{\e,\lambda}(t))\big<W_\e(t),\mathcal{L}_{1,\e}\lambda(t)\big>_{L^2(\mathbb{R}^{2d})}\big<W_\e(t),\mathcal{L}_{\e}\lambda(t)\big>_{L^2(\mathbb{R}^{2d})}\\
&+\mathcal{O}(\e^{(1+\ga)/2+s_c}),
\end{split}\] 
and $\sup_{\e,t}\E[\lvert \mathcal{A}(f^\e_0+f^\e_1)(t)\rvert^2]<+\infty$. That conclude the proof of Lemma \ref{A1} and then the proof of Proposition \ref{tightness}. 

\begin{proof}[of Lemma \ref{bound4}]

A short computation gives
\[\begin{split}
\mathcal{L}_{\e}\big( \mathcal{L}_{1,\e}&\lambda\big)\big(t,\xt{x},\xt{k})=\\
&-\frac{1}{(2\pi)^{2d}}\iint\widehat{V}\big(\frac{t}{\e^{s+\ga}},d\xt{p}_1\big)\widehat{V}\big(\frac{t}{\e^{s+\ga}},d\xt{p}_2\big)e^{i(\xt{p}_1+\xt{p}_2)\cdot \xt{x}/\e^s}\\
&\hspace{0.5cm}\times\Big(\frac{1}{\mathfrak{g}(\xt{p}_2)-i\e^{\ga}\big(\xt{k}-\frac{\xt{p}_1}{2\e^{s_c}}\big)\cdot\xt{p}_2}\big(\lambda\big(\xt{x},\xt{k}-\frac{\xt{p}_1}{2\e^{s_c}}-\frac{\xt{p}_2}{2\e^{s_c}}\big)-\lambda\big(\xt{x},\xt{k}-\frac{\xt{p}_1}{2\e^{s_c}}+\frac{\xt{p}_2}{2\e^{s_c}}\big) \big)\\
&\hspace{1cm}-\frac{1}{\mathfrak{g}(\xt{p}_2)-i\e^{\ga}\big(\xt{k}+\frac{\xt{p}_1}{2\e^{s_c}}\big)\cdot\xt{p}_2}\big(\lambda\big(\xt{x},\xt{k}+\frac{\xt{p}_1}{2\e^{s_c}}-\frac{\xt{p}_2}{2\e^{s_c}}\big)-\lambda\big(\xt{x},\xt{k}+\frac{\xt{p}_1}{2\e^{s_c}}+\frac{\xt{p}_2}{2\e^{s_c}}\big) \big)\Big).
\end{split}\]
Moreover, the forth order moment of our Gaussian field $\widehat{V}$ is given by   
\[
\begin{split}
\E\big[\widehat{V}(t_1,d\xt{p}_1)\widehat{V}(t_2,d\xt{p}_2)&\widehat{V}^\ast(t_3,d\xt{p}_3)\widehat{V}^\ast(t_4,d\xt{p}_4)\big]=\\
&\hspace{0.4cm}(2\pi)^{2d}\tilde{R}(t_1-t_2,\xt{p}_1)\tilde{R}(t_3-t_4,\xt{p}_3)\delta(\xt{p}_1+\xt{p}_2)\delta(\xt{p}_3+\xt{p}_4)\\
&+(2\pi)^{2d}\tilde{R}(t_1-t_3,\xt{p}_1)\tilde{R}(t_2-t_4,\xt{p}_3)\delta(\xt{p}_1-\xt{p}_3)\delta(\xt{p}_2-\xt{p}_4)\\
&+(2\pi)^{2d}\tilde{R}(t_1-t_4,\xt{p}_1)\tilde{R}(t_2-t_3,\xt{p}_3)\delta(\xt{p}_1-\xt{p}_4)\delta(\xt{p}_2-\xt{p}_3),
\end{split}
\]
so that, using the smoothness of $\lambda$, \eqref{SDCAreg} and the change of variable $\xt{p}'=\xt{p}/\e^{s_c}$, we obtain
\[\begin{split}
\e^{2(1-s)}\E\big[&\|\mathcal{L}_{\e}\big( \mathcal{L}_{1,\e}\lambda\big)(t)\|^2_{L^2(\mathbb{R}^{2d})} \big]\leq\e^{2(1-s-\theta s_c)}\\
& \times C \Big[\Big(\int_{\lvert \xt{p}\rvert<1} d\xt{p}\frac{1}{ \lvert \xt{p}\rvert ^{d+\theta-1}}\Big)^2+ \Big(\int_{\lvert \xt{p}\rvert>1} d\xt{p}\frac{1}{ \lvert \xt{p}\rvert ^{d+\theta}}\Big)^2\Big]\Big(\|\nabla_{\xt{x}}\lambda\|^2_{L^2(\mathbb{R}^{2d})}+\|\lambda\|^2_{L^2(\mathbb{R}^{2d})}\Big)<+\infty,
\end{split}\]
since $s_c=(1-s)/\theta$. In the same way we have
\[\sup_{t\leq T}\sup_{\e\in(0,1)}\e^{2(1-s)}\E\big[\|\mathcal{L}_{\e}\lambda(t)\|^2_{L^2(\mathbb{R}^{2d})} \times\|\mathcal{L}_{1,\e}\lambda(t)\|^2_{L^2(\mathbb{R}^{2d})}\big]<+\infty.\]
$\square$
\end{proof}
$\square$ 
$\blacksquare$

\end{proof}

\end{proof}

\subsection{Identification of all subsequence limits}\label{identificationsec}

In this section, we identify all the converging subsequence limits of the process $(W_\e)_\e$, given by its tightness, as solutions of a deterministic diffusion equation. For the sake of simplicity we still denote by $(W_\e)_\e$ such a subsequence. Let us note that in this case all the limit processes are therefore deterministic, and the convergence of the process $(W_\e)_\e$ also holds in probability. We will see that this fractional diffusion equation is well posed. In particular, this will imply the convergence of the process $(W_\e)_\e$ itself to the unique solution of this diffusion equation.

\begin{prop}\label{identification}
Let $W$ be an accumulation point of $(W_\e)_\e$. Then, $W$ is the unique strong solution of the diffusion equation
 \begin{equation}\label{diffeq}
\partial_t W = -\sigma(\theta)(-\Delta_\xt{k})^{\theta/2}W,
\end{equation}
with $W(0,\xt{x},\xt{k})=W_0(\xt{x},\xt{k})$ defined by \eqref{w01}, and
\[\sigma(\theta)=\frac{2a(0) \theta\Gamma(1-\theta)}{(2\pi)^d}\int_{\mathbb{S}^{d-1}}dS(\xt{u})\lvert\xt{e}_1\cdot \xt{u} \rvert^\theta.\]
\end{prop}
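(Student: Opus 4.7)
The plan is to finish the perturbed-test-function argument begun in Section \ref{tightnesssec} and then invoke weak well-posedness of the fractional diffusion equation \eqref{diffeq}. At the end of the proof of Lemma \ref{A1}, the only $O(1)$ piece of $\mathcal{A}^\e(f_0^\e+f_1^\e)$ that still oscillates is $\e^{1-s}f'(W_{\e,\lambda}(t))\langle W_\e(t),\mathcal{L}_\e(\mathcal{L}_{1,\e}\lambda(t))\rangle_{L^2(\mathbb{R}^{2d})}$, so the first task is to replace this quantity by its mean through a second corrector.

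I would introduce $f_2^\e$ in exact analogy with $f_1^\e$, but integrating from $t$ to $+\infty$ the conditional expectation of the \emph{centered} part $\mathcal{L}_\e(\mathcal{L}_{1,\e}\lambda)-\E[\mathcal{L}_\e(\mathcal{L}_{1,\e}\lambda)\mid\mathcal{F}^\e_t]$, using the Markov-type formulas \eqref{markovesp}--\eqref{markovvar}. Analogues of Lemmas \ref{bound2} and \ref{A1} then show that $f_2^\e$ tends to $0$ uniformly in $t$ and that $\mathcal{A}^\e(f_0^\e+f_1^\e+f_2^\e)$ remains uniformly integrable, effectively substituting $\E[\mathcal{L}_\e(\mathcal{L}_{1,\e}\lambda)]$ for $\mathcal{L}_\e(\mathcal{L}_{1,\e}\lambda)$ in the pseudogenerator. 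The $f''$ cross term of Lemma \ref{A1} is treated the same way, and its averaged contribution vanishes, which is consistent with the limit being deterministic.

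The core step is the explicit computation of $\E[\mathcal{L}_\e(\mathcal{L}_{1,\e}\lambda)(t,\xt{x},\xt{k})]$ via the Gaussian pairing already used in Lemma \ref{bound4}, which gives
\[
-\frac{1}{(2\pi)^d}\int d\xt{p}\,\widehat{R}_0(\xt{p})\Big[\tfrac{\lambda(\xt{x},\xt{k})-\lambda(\xt{x},\xt{k}-\xt{p}/\e^{s_c})}{\mathfrak{g}(\xt{p})+i\e^\ga(\xt{k}-\xt{p}/(2\e^{s_c}))\cdot\xt{p}}-\tfrac{\lambda(\xt{x},\xt{k}+\xt{p}/\e^{s_c})-\lambda(\xt{x},\xt{k})}{\mathfrak{g}(\xt{p})+i\e^\ga(\xt{k}+\xt{p}/(2\e^{s_c}))\cdot\xt{p}}\Big].
\]
The rescaling $\xt{p}'=\xt{p}/\e^{s_c}$ together with \eqref{SDCAreg}--\eqref{theta} produces an overall prefactor $\e^{1-s-\theta s_c}$, which is exactly $1$ by the calibration $s_c=(1-s)/\theta$; the integrand converges pointwise to $a(0)/(\nu|\xt{p}'|^{d+\theta})$ times the second-difference of $\lambda$ in $\xt{k}$, which after symmetrization is $-\sigma(\theta)(-\Delta_\xt{k})^{\theta/2}\lambda$ with the announced constant. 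The subleading terms in the denominator (the drift $i\e^\ga\xt{k}\cdot\xt{p}$ and the quadratic piece in $\xt{p}$) go to $0$ in the regime $s>1/(2\kappa_\ga)$, and the dispersion contribution $\e^{s_c}f'W_{\e,\lambda_1}$ is $o(1)$ since $s_c>0$.

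Finally, invoking Theorem \ref{martingale} for the test functions $f(x)=x$ and $f(x)=x^2$ and passing to the limit along any converging subsequence produces a limit process $W$ such that $\langle W(t),\lambda\rangle_{L^2(\mathbb{R}^{2d})}$ is deterministic and satisfies
\[
\tfrac{d}{dt}\langle W(t),\lambda\rangle_{L^2(\mathbb{R}^{2d})}=-\sigma(\theta)\langle W(t),(-\Delta_\xt{k})^{\theta/2}\lambda\rangle_{L^2(\mathbb{R}^{2d})},
\]
for every $\lambda\in\mathcal{C}^\infty_0(\mathbb{R}^{2d})$, with initial value $W_0$ from \eqref{w01}. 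Weak uniqueness in the class of uniformly $L^2$-bounded solutions is immediate by Fourier transform in $\xt{k}$ and is recorded in the explicit formula \eqref{formula}. The whole family $(W_\e)_\e$ therefore converges to the same deterministic limit, and convergence in distribution upgrades to convergence in probability. The main obstacle lies in the homogenization computation above: one must verify, in a dominated-convergence sense, that the non-integrable singular factor $\widehat{R}_0(\xt{p})/\mathfrak{g}(\xt{p})\sim a(0)/|\xt{p}|^{d+\theta}$ is exactly converted by the scaling $s_c=(1-s)/\theta$ into the fractional Laplacian kernel, while the subleading denominator terms vanish uniformly enough to legitimize passing to the limit under the singular integral.
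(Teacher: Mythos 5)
Your overall architecture is the same as the paper's: a second corrector built by integrating the centered conditional expectation over $[t,+\infty)$, substitution of the Gaussian average into the pseudogenerator, the rescaling $\xt{p}'=\xt{p}/\e^{s_c}$ with the calibration $s_c=(1-s)/\theta$ producing the exact prefactor $1$, the martingale characterization (your choice $f(x)=x$, $f(x)=x^2$ is the paper's "quadratic variation equals zero" step), and uniqueness by Fourier transform in $\xt{k}$, which is equivalent to the paper's duality argument and yields \eqref{formula}. Your computation of the averaged drift also essentially agrees with the paper's kernel $G_{1,\e}$ in \eqref{G1e} and its limit \eqref{G1}.

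There is, however, a genuine gap at the single most delicate point. You assert that the averaged $f''$ cross term "vanishes, which is consistent with the limit being deterministic." This is circular: whether the limit is deterministic is precisely what that term decides, and the vanishing is not automatic. After the Gaussian pairing and the rescaling, the averaged $f''$ kernel (the paper's $G_{2,\e}$, see \eqref{G2e}) carries exactly the same prefactor $\e^{1-s-\theta s_c}=1$ as the drift term, so it is of order one in size; nothing about centering or averaging kills it. The only reason it disappears is the residual spatial phase $e^{i\xt{p}\cdot(\xt{x}_1-\xt{x}_2)/\e^s}$, which after the substitution $\xt{p}=\e^{s_c}\xt{p}'$ becomes $e^{i\xt{p}'\cdot(\xt{x}_1-\xt{x}_2)/\e^{s-s_c}}$ and still oscillates fast because $s_c<s$; the Riemann--Lebesgue lemma combined with dominated convergence then gives $\lim_\e\|\e^{1-s}G_{2,\e}\lambda\|_{L^2(\mathbb{R}^{4d})}=0$. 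That this step cannot be waved away is shown by the critical case $s_c=s$ treated in Theorem \ref{thasymptotic3}: there the same computation leaves no fast phase, the $f''$ term survives in the limit (giving the kernel $G_2$ of \eqref{G2}), and the limiting Wigner transform is genuinely random. Without the fast-phase argument you cannot conclude that the quadratic variation of the limiting martingale vanishes, i.e., that $W$ is deterministic, and self-averaging is the heart of Proposition \ref{identification}.
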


To prove this proposition we use the notion of pseudogenerator introduced in Section \ref{pseudogene} and the perturbed-test-function technique that we have already used in Section \ref{tightnesssec} for the proof of the tightness. Thanks to the pseudogenerator we are able to characterize the accumulation points of $(W_\e)_\e$ as solutions of a well-posed martingale problem, but as we will see it will turn out that the martingale problem is only a deterministic  partial differential equation. However, as we saw in Section \ref{tightnesssec} the pseudogenerator $\mathcal{A}^\e(f^\e _0)$ has singular terms, so that we have to modify the test-function $f^\e _0$ with a small corrector $f^\e _1$ in order to remove these singular terms. As a result, $\mathcal{A}^\e(f^\e _0 +f^\e _1)$ has no more singular term but it is not yet a generator of a martingale problem. Consequently, we will introduce a second small corrector $f^\e _2$ so that  $\mathcal{A}^\e(f^\e _0 +f^\e _1+f^\e _2)$ is now a generator allowing us to characterize the accumulation points. 

\begin{proof}[of Proposition \ref{identification}]
In this proof we use the following notation
\[\varphi\otimes\psi(\xt{x}_1,\xt{k}_1,\xt{x}_2,\xt{k}_2)= \varphi(\xt{x}_1,\xt{k}_1)\psi(\xt{x}_2,\xt{k}_2).\]
 Let us introduce the second corrector 
\[\begin{split}
f^\e _2 (t)=&\e^{1-s} f'(W_{\e,\lambda} (t))\big<W_\e(t),H_{1,\e}(t)\big>_{L^2(\mathbb{R}^{2d})}\\
&+\e^{1-s} f''(W_{\e,\lambda} (t))\big<W_\e(t)\otimes W_\e(t),H_{2,\e}(t)\big>_{L^2(\mathbb{R}^{4d})},
\end{split}
\]
where
\[\begin{split}
H_{1,\e}(t,&\xt{x},\xt{k})=\\
&\frac{1}{(2\pi)^{2d}i^2}\int_{t}^{+\infty}du\iint \Big(\mathbb{E}^\e _t\big[ \widehat{V}(u/\e^{s+\ga},d\xt{p}_1) \widehat{V}(u/\e^{s+\ga},d\xt{p}_2)\big] -\mathbb{E}\big[ \widehat{V}(0,d\xt{p}_1) \widehat{V}(0,d\xt{p}_2)\big] \Big) \\
&\times  e^{i(\xt{p}_1+\xt{p}_2)\cdot\xt{x}/\e^s}e^{i(u-t)(\xt{p}_1+\xt{p}_2)\cdot \xt{k}/\e^s}\\
&\times\Big[\frac{1}{\mathfrak{g}(\xt{\xt{p}_2})-i \e^\ga \big(\xt{k}-\frac{\xt{p}_1}{2\e^{s_c}}\big)\cdot\xt{p}_2}\Big(\lambda\big(\xt{x},\xt{k}-\frac{\xt{p}_1}{2\e^{s_c}}-\frac{\xt{p}_2}{2\e^{s_c}}\big)-\lambda\big(\xt{x},\xt{k}-\frac{\xt{p}_1}{2\e^{s_c}}+\frac{\xt{p}_2}{2\e^{s_c}}\big)\Big)\\
&\hspace{2cm}-\frac{1}{\mathfrak{g}(\xt{\xt{p}_2})-i\e^\ga\big(\xt{k}+\frac{\xt{p}_1}{2\e^{s_c}}\big)\cdot\xt{p}_2}\Big(\lambda\big(\xt{x},\xt{k}+\frac{\xt{p}_1}{2\e^{s_c}}-\frac{\xt{p}_2}{2\e^{s_c}}\big)-\lambda\big(\xt{x},\xt{k}+\frac{\xt{p}_1}{2\e^{s_c}}+\frac{\xt{p}_2}{2\e^{s_c}}\big)\Big)\Big] ,
\end{split}\]
and
\[\begin{split}
H_{2,\e}(t,&\xt{x}_1,\xt{k}_1,\xt{x}_2,\xt{k}_2)=\\
&\frac{1}{(2\pi)^{2d}i^2}\int_{t}^{+\infty}du\iint \Big(\mathbb{E}^\e _t\big[ \widehat{V}(u/\e^{s+\ga},d\xt{p}_1) \widehat{V}(u/\e^{s+\ga},d\xt{p}_2)\big] -\mathbb{E}\big[ \widehat{V}(0,d\xt{p}_1) \widehat{V}(0,d\xt{p}_2)\big] \Big) \\
&\times e^{i\xt{p}_1\cdot\xt{x}_1/\e^s}e^{i\xt{p}_2\cdot\xt{x}_2/\e^s}  e^{i(u-t)(\xt{p}_1\cdot \xt{k}_1+\xt{p}_2\cdot \xt{k}_2)/\e^s}\frac{1}{\mathfrak{g}(\xt{\xt{p}_1})-i\e^\ga\xt{k}_1\cdot\xt{p}_1}\\
&\times \Big(\lambda\big(\xt{x}_1,\xt{k}_1-\frac{\xt{p}_1}{2\e^{s_c}}\big)-\lambda\big(\xt{x}_1,\xt{k}_1+\frac{\xt{p}_1}{2\e^{s_c}}\big)\Big)\\
&\times\Big(\lambda\big(\xt{x}_2,\xt{k}_2-\frac{\xt{p}_2}{2\e^{s_c}}\big)-\lambda\big(\xt{x}_2,\xt{k}_2+\frac{\xt{p}_2}{2\e^{s_c}}\big)\Big).
\end{split}\]
Let 
\[f^\e (t)=f^\e _0 (t) + f^\e _1 (t)+ f^\e _2 (t).\] 
According to Theorem \ref{martingale}, $\big(M^\e _{f^\e} (t) \big)_{t\geq 0}$ is an $( \mathcal{F}^\e _t)$-martingale. That is, for every bounded continuous function $\Phi$, every sequence $0< s_1<\cdots <s_n \leq s <t$, and every family $(\mu_j)_{j\in \{1,\dots,n\}}\in L^2(\mathbb{R}^{2d})^n$, we have
\begin{equation}\label{MGe}\mathbb{E}\Big[ \Phi\big(W _{\e,\mu_j}(s_j),1\leq j \leq n\big)\Big( f^\e (t) - f^\e (s)-\int _s ^t  \mathcal{A}^\e f^\e (u)du \Big) \Big]=0,\end{equation}
where after a long but straightforward computation using  \eqref{markovvar}, we have
\begin{equation}\label{A2}\begin{split}
\mathcal{A}^\e(f^\e _0 +f^\e _1+f^\e_2)(t)=&\e^{1-s}f'(W_{\e,\lambda}(t))\big<W_\e(t),G_{1,\e}\lambda\big>_{L^2(\mathbb{R}^{2d})}\\
&+\e^{1-s} f''(W_{\e,\lambda}(t))\big<W_\e(t)\otimes W_\e(t),G_{2,\e}\lambda \big>_{L^2(\mathbb{R}^{4d})}\\
&+o(1),
\end{split}\end{equation}
with
\begin{equation}\label{G1e}\begin{split}
G_{1,\e}\lambda(\xt{x},\xt{k})=&-\frac{1}{(2\pi)^d}\int d\xt{p}\widehat{R}_0(\xt{p})\Big[\frac{1}{\mathfrak{g}(\xt{\xt{p}})-i\e^\ga\big(\xt{k}-\frac{\xt{p}}{2\e^{s_c}}\big)\cdot\xt{p}}\Big(\lambda(\xt{x},\xt{k})-\lambda\big(\xt{x},\xt{k}-\frac{\xt{p}}{\e^{s_c}}\big)\Big)\\
&\hspace{2cm}-\frac{1}{\mathfrak{g}(\xt{\xt{p}})-i\e^\ga\big(\xt{k}+\frac{\xt{p}}{2\e^{s_c}}\big)\cdot\xt{p}}\Big(\lambda\big(\xt{x},\xt{k}+\frac{\xt{p}}{\e^{s_c}}\big)-\lambda(\xt{x},\xt{k})\Big)\Big],
\end{split}\end{equation}
and
\begin{equation}\label{G2e}\begin{split}
G_{2,\e}\lambda(\xt{x}_1,\xt{k}_1,\xt{x}_2,\xt{k}_2)=&-\frac{1}{(2\pi)^d}\int d\xt{p} \frac{\mathfrak{g}(\xt{p})\widehat{R}_0(\xt{p})e^{i\xt{p}\cdot(\xt{x}_1-\xt{x}_2)/\e^s}}{(\mathfrak{g}(\xt{p})-i\e^\ga\xt{k}_1\cdot\xt{p})(2\mathfrak{g}(\xt{p})-i\e^\ga(\xt{k}_1-\xt{k}_2)\cdot\xt{p} )}\\
&\times \Big(\lambda\big(\xt{x}_1,\xt{k}_1-\frac{\xt{p}}{2\e^{s_c}}\big)-\lambda\big(\xt{x}_1,\xt{k}_1+\frac{\xt{p}}{2\e^{s_c}}\big)\Big)\\
&\times\Big(\lambda\big(\xt{x}_2,\xt{k}_2-\frac{\xt{p}}{2\e^{s_c}}\big)-\lambda\big(\xt{x}_2,\xt{k}_2+\frac{\xt{p}}{2\e^{s_c}}\big)\Big).
\end{split}\end{equation}
Let us remark that the second corrector $f^\e_2$ is effectively small.
\begin{lem}\label{bound6}
\[ \lim_\e \sup_{t\geq 0} \E[\lvert f^\e_2(t)\rvert]=0.\]
\end{lem}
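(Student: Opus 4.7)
The plan is to bound $\E[\lvert f^\e_2(t)\rvert]$ uniformly in $t\geq 0$ and show that this bound vanishes as $\e\to 0$. By Cauchy--Schwarz, the conservation $\|W_\e(t)\|_{L^2(\mathbb{R}^{2d})}=\|W_{0,\e}\|_{L^2(\mathbb{R}^{2d})}\leq r$, and the boundedness of $f',f''$,
\[
\E[\lvert f^\e_2(t)\rvert]\leq C\e^{1-s}\Big(\E[\|H_{1,\e}(t)\|_{L^2(\mathbb{R}^{2d})}]+\E[\|H_{2,\e}(t)\|_{L^2(\mathbb{R}^{4d})}]\Big),
\]
so everything reduces to $L^2$-estimates on $H_{1,\e}$ and $H_{2,\e}$, which by stationarity of $\widehat{V}$ will be uniform in $t\geq 0$.

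First, I would invoke Proposition \ref{propmar} to decompose the bracket $\E^\e_t[\widehat{V}(u/\e^{s+\ga})\widehat{V}(u/\e^{s+\ga})]-\E[\widehat{V}(0)\widehat{V}(0)]$ that defines $H_{j,\e}$. Using the Gaussian Ornstein--Uhlenbeck decomposition $\widehat{V}(u/\e^{s+\ga},d\xt{p})=e^{-\mathfrak{g}(\xt{p})(u-t)/\e^{s+\ga}}\widehat{V}(t/\e^{s+\ga},d\xt{p})+Y_{u,t}(d\xt{p})$ with $Y_{u,t}$ independent of $\mathcal{F}^\e_t$, the bracket splits into a \emph{stochastic} piece proportional to $e^{-(\mathfrak{g}(\xt{p}_1)+\mathfrak{g}(\xt{p}_2))(u-t)/\e^{s+\ga}}\widehat{V}(t/\e^{s+\ga},d\xt{p}_1)\widehat{V}(t/\e^{s+\ga},d\xt{p}_2)$ and a \emph{deterministic} piece proportional to $-(2\pi)^d\widehat{R}_0(\xt{p}_1)e^{-2\mathfrak{g}(\xt{p}_1)(u-t)/\e^{s+\ga}}\delta(\xt{p}_1+\xt{p}_2)d\xt{p}_1 d\xt{p}_2$. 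Correspondingly $H_{j,\e}=H^{\mathrm{sto}}_{j,\e}+H^{\mathrm{det}}_{j,\e}$ for $j=1,2$.

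Next, I would perform the time integral $\int_t^{+\infty}du$ explicitly in each piece. The decay on time-scale $\e^{s+\ga}/\mathfrak{g}(\xt{p})$ produces a factor of order $\e^{s+\ga}$, which combines with the external $\e^{1-s}$ to yield an overall $\e^{1+\ga}$. What remains are purely spatial integrals: for the deterministic part, kernels built from $\widehat{R}_0(\xt{p})/\mathfrak{g}(\xt{p})$ against the $\lambda$-differences; for the stochastic part, after taking the $L^2$-norm squared and using Wick's formula for the fourth moment of $\widehat{V}$, kernels built from products of $\widehat{R}_0(\xt{p}_i)/\mathfrak{g}(\xt{p}_i)$. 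These are then estimated by exactly the techniques used in the proof of Lemma \ref{bound3}: split $\{\lvert\xt{p}\rvert\leq 1\}$ vs $\{\lvert\xt{p}\rvert>1\}$, exploit the first-order Taylor cancellation in $\lambda(\cdot,\xt{k}-\xt{p}/(2\e^{s_c}))-\lambda(\cdot,\xt{k}+\xt{p}/(2\e^{s_c}))$ near $\xt{p}=0$, use the rapid decay of $\lambda$ at infinity, and apply the rescaling $\xt{p}\mapsto\e^{s_c}\xt{p}$ to extract the relevant powers of $\e^{-s_c}$.

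The main obstacle is the careful bookkeeping of $\e$-powers in the presence of the compounded singularity $\widehat{R}_0/\mathfrak{g}^2\sim\lvert\xt{p}\rvert^{-(d+\theta+2\beta)}$ at the origin (one more factor of $1/\mathfrak{g}$ than in Lemma \ref{bound3}). Using the critical relation $s_c=(1-s)/\theta$ together with the constraint $s_c<s<(s+\ga)/(2\beta)$ already exploited in Lemma \ref{bound3}, the spatial integrals contribute a factor strictly smaller than $\e^{-(1+\ga)}$, so that the net exponent of $\e$ in the upper bound for $\E[\lvert f^\e_2(t)\rvert]$ is strictly positive; this yields the desired uniform-in-$t$ decay and closes the argument.
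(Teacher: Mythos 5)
Your proposal is correct and takes essentially the same route as the paper's proof: the same Ornstein--Uhlenbeck decomposition of the conditional expectation via Proposition \ref{propmar} into a stochastic and a deterministic piece, the same explicit time integration producing the overall factor $\e^{1+\ga}$, and the same estimate of the remaining spatial kernels (with the compounded singularity $\lvert\xt{p}\rvert^{-(d+\theta+2\beta)}$) yielding a factor $\e^{-(\theta+2\beta)s_c}$ after the rescaling $\xt{p}\mapsto\e^{s_c}\xt{p}$. Your closing power count, $(\theta+2\beta)s_c=(1-s)+2\beta s_c<1+\ga$ from $s_c<(s+\ga)/(2\beta)$, is exactly equivalent to the paper's stated condition $1+\ga-(\theta+2\beta)s_c>0\Longleftrightarrow s>1/(2\kappa_\ga)$, which holds under the hypotheses of Theorem \ref{thasymptotic}.
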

As a result, using \eqref{A2}, Lemma \ref{bound2}, Lemma \ref{bound6}, we have the following result
\begin{lem}\label{MGlimit}
\[M_{f,\lambda}(t)=f (W_\lambda(t)) - f(W_\lambda(0))-\int _0^t  \partial_v f(W_{\lambda}(u))\big<W(u),G_{1}\lambda\big>_{L^2(\mathbb{R}^{2d})}du\]
is a martingale where 
\[
G_{1}\lambda(\xt{x},\xt{k})=-\sigma(\theta)(-\Delta)^{\theta/2}\lambda(\xt{x},\xt{k}).
\]
More particularly, let us consider $f$ be a smooth function so that $f(v)=v$, for all $ v$ such that $\lvert v\rvert\leq r\|\lambda\|_{L^2(\mathbb{R}^{2d})}$, then
\[M_{\lambda}(t)=W_\lambda(t) - W_\lambda(0)-\int _0 ^t  \big<W(u),G_{1}\lambda\big>_{L^2(\mathbb{R}^{2d})} du\]
is a martingale with a quadratic variation equal to $0$. Consequently, $M_{\lambda}=0$, that is $W$ is a deterministic weak solution of the diffusion equation \eqref{diffeq}.
\end{lem}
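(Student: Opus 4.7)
Proof proposal.

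The plan is to pass to the limit $\e\to 0$ in the pre-limit martingale identity \eqref{MGe} for $f^\e=f^\e_0+f^\e_1+f^\e_2$. Along the accumulation subsequence, weak $L^2$-convergence $W_\e(t)\to W(t)$ on $\mathcal{B}_r$ gives $W_{\e,\lambda}(t)\to W_\lambda(t)$, hence $f^\e_0(t)\to f(W_\lambda(t))$ by continuity of $f$. The two correctors vanish in $L^1$ uniformly in $t$ by Lemma \ref{bound2} and Lemma \ref{bound6}. Uniform integrability of $\mathcal{A}^\e f^\e$ has been established during the tightness step, so it suffices to identify the limits of the two structural contributions in \eqref{A2}.

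The core step is to show $\e^{1-s}G_{1,\e}\lambda\to -\sigma(\theta)(-\Delta_\xt{k})^{\theta/2}\lambda$ strongly in $L^2(\mathbb{R}^{2d})$. I would perform the change of variables $\xt{q}=\xt{p}/\e^{s_c}$ in \eqref{G1e} and use $\mathfrak{g}(\xt{p})=\nu|\xt{p}|^{2\beta}$ together with $\widehat{R}_0(\xt{p})=a(\xt{p})/|\xt{p}|^{d+2(\alpha-1)}$. The calibration $s_c=(1-s)/\theta$ and $\theta=2(\alpha+\beta-1)$ make the $\e$-prefactor collapse to $\e^{1-s-\theta s_c}=1$, while the imaginary drift $\e^{\ga}\xt{k}\cdot\xt{p}=\e^{\ga+s_c}\xt{k}\cdot\xt{q}$ is subdominant to $\mathfrak{g}(\xt{p})\sim\nu\e^{2\beta s_c}|\xt{q}|^{2\beta}$ (automatic since $\beta\le 1/2$). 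Continuity of $a$ at the origin and dominated convergence then produce the principal-value representation of $(-\Delta_\xt{k})^{\theta/2}$, with the constant $\sigma(\theta)$ arising from the angular integral on $\mathbb{S}^{d-1}$.

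The second-order term $\e^{1-s}\big<W_\e\otimes W_\e, G_{2,\e}\lambda\big>_{L^2(\mathbb{R}^{4d})}$ must tend to zero. In \eqref{G2e} the kernel carries the off-diagonal oscillation $e^{i\xt{p}\cdot(\xt{x}_1-\xt{x}_2)/\e^s}$. Passing to Fourier variables in $(\xt{x}_1,\xt{x}_2)$ and applying Plancherel localizes the integrand near the diagonal $\xt{x}_1=\xt{x}_2$; the same rescaling $\xt{q}=\xt{p}/\e^{s_c}$, combined with the uniform bound $\|W_\e(t)\|_{L^2}\le r$, then extracts an extra small factor reflecting the mismatch between the macroscopic phase scale $\e^{-s}$ and the mesoscopic wavenumber scale $\e^{-s_c}$ (the strict inequality $s_c<s$ is used here). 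This vanishing is the mechanism of self-averaging, and I expect this estimate to be the principal technical obstacle, since the decay must be uniform against the random profile $W_\e$.

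Combining these two limits, $M_{f,\lambda}$ is the uniformly integrable limit of martingales, hence itself a martingale. Specializing to smooth $f$ with $f(v)=v$ on $[-r\|\lambda\|_{L^2},r\|\lambda\|_{L^2}]$, which is permissible because $\|W_\e(t)\|_{L^2}\le r$, yields that $M_\lambda$ is a martingale. Applying the whole procedure to $f(v)=v^2$ identifies the predictable quadratic variation of $M_\lambda$ as the vanishing $G_{2,\e}$-contribution, so $\E[M_\lambda(t)^2]=0$ and $M_\lambda\equiv 0$. This is precisely the weak formulation of \eqref{diffeq} tested against $\lambda$, and density of $\mathcal{C}^\infty_0(\mathbb{R}^{2d})$ in $L^2(\mathbb{R}^{2d})$ concludes the identification.
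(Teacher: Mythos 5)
Your proposal is correct and takes essentially the same route as the paper's own proof: pass to the limit in the pre-limit martingale identity \eqref{MGe}, remove the correctors via Lemma \ref{bound2} and Lemma \ref{bound6}, rescale $\xt{p}'=\xt{p}/\e^{s_c}$ so that the prefactor $\e^{1-s-\theta s_c}$ equals one and $\e^{1-s}G_{1,\e}\lambda\to-\sigma(\theta)(-\Delta_{\xt{k}})^{\theta/2}\lambda$ strongly in $L^2(\mathbb{R}^{2d})$, kill $\e^{1-s}G_{2,\e}\lambda$ in $L^2(\mathbb{R}^{4d})$ through the fast phase $e^{i\xt{p}\cdot(\xt{x}_1-\xt{x}_2)/\e^s}$ (Riemann--Lebesgue, using $s_c<s$, which together with $\|W_\e\otimes W_\e\|_{L^2}\le r^2$ settles the uniformity you worried about), and finally take $f(v)=v$ and $f(v)=v^2$ to obtain a martingale with vanishing quadratic variation, hence $M_\lambda\equiv 0$. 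The only caveat is your parenthetical claim that the imaginary drift is subdominant \emph{automatically since} $\beta\le 1/2$: after rescaling, the drift-to-spectral-gap ratio is of order $\e^{\ga+(1-2\beta)s_c}$, so strict subdominance needs $\ga>0$ or $\beta<1/2$, but the paper's own dominated-convergence step is equally silent on this borderline case, so this does not distinguish your argument from the published one.
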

To show the weak uniqueness of this equation, let us assume that $W_0=0$. Moreover, it is clear that this diffusion equation with initial condition $\lambda_0\in L^2(\mathbb{R}^d)$ admits a unique strong  solution that we denote by $\lambda^\theta$ of the form \eqref{formula}. As result, for all $T>0$
\[
\big<W(T),\lambda_0\big>_{L^2(\mathbb{R}^{2d})}=\big<W(T),\tilde{\lambda}^\theta(T)\big>_{L^2(\mathbb{R}^{2d})}=\int_0^T \big<W(t), \partial_t \tilde{\lambda}^\theta(t)+\sigma(\theta)(-\Delta)^{\theta/2} \tilde{\lambda}^\theta(t)\big>_{L^2(\mathbb{R}^{2d})} dt=0,
\]
for $ \tilde{\lambda}^\theta(t)= \lambda^\theta(T-t)$. As a result, all the accumulation points are also strong solutions of the diffusion equation \eqref{diffeq}. That concludes the proof a Proposition \ref{identification}.

\begin{proof}[of Lemma \ref{bound6}]
According to \eqref{markovvar}
\[\begin{split}
\E^\e_t\Big[\widehat{V}\Big(u+&\frac{t}{\e^{s+\ga}},d\xt{p}_1\Big)\widehat{V}\Big(u+\frac{t}{\e^{s+\ga}},d\xt{p}_2\Big)\Big]-\E\Big[\widehat{V}(0,d\xt{p}_1)\widehat{V}(0,d\xt{p}_2)\Big]\\
&=e^{-(\mathfrak{g}(\xt{p}_1)+\mathfrak{g}(\xt{p}_2))u}\widehat{V}\Big(\frac{t}{\e^{s+\ga}},d\xt{p}_1\Big)\widehat{V}\Big(\frac{t}{\e^{s+\ga}},d\xt{p}_2\Big)-(2\pi)^d e^{-2\mathfrak{g}(\xt{p}_1)u}\widehat{R}_0(\xt{p}_1)\delta(\xt{p}_1+\xt{p}_2).
\end{split}\]
Consequently,
\[
f^\e _2 (t)=\e^{1+\ga}\Big[ f'(W_{\e,\lambda} (t))\big<W_\e(t),\tilde{H}_{1,\e}(t)\big>_{L^2(\mathbb{R}^{2d})}+ f''(W_{\e,\lambda} (t))\big<W_\e(t)\otimes W_\e(t),\tilde{H}_{2,\e}(t)\big>_{L^2(\mathbb{R}^{4d})}\Big],
\]
where
\[\begin{split}
\tilde{H}_{1,\e}(t,\xt{x},&\xt{k})=\frac{1}{(2\pi)^{2d}i^2}\iint \widehat{V}(t/\e^{s+\ga},d\xt{p}_1) \widehat{V}(t/\e^{s+\ga},d\xt{p}_2)\frac{ e^{i(\xt{p}_1+\xt{p}_2)\cdot\xt{x}/\e^s}}{(\mathfrak{g}(\xt{p}_1)+\mathfrak{g}(\xt{p}_2)-i\e^\ga\xt{k}\cdot(\xt{p}_1+\xt{p}_2))}\\
&\times  \Big[\frac{1}{\mathfrak{g}(\xt{\xt{p}_2})-i\e^\ga\big(\xt{k}-\frac{\xt{p}_1}{2\e^{s_c}}\big)\cdot\xt{p}_2}\Big(\lambda\big(\xt{x},\xt{k}-\frac{\xt{p}_1}{2\e^{s_c}}-\frac{\xt{p}_2}{2\e^{s_c}}\big)-\lambda\big(\xt{x},\xt{k}-\frac{\xt{p}_1}{2\e^{s_c}}+\frac{\xt{p}_2}{2\e^{s_c}}\big)\Big)\\
&\hspace{1cm}-\frac{1}{\mathfrak{g}(\xt{\xt{p}_2})-i\e^\ga\big(\xt{k}+\frac{\xt{p}_1}{2\e^{s_c}}\big)\cdot\xt{p}_2}\Big(\lambda\big(\xt{x},\xt{k}+\frac{\xt{p}_1}{2\e^{s_c}}-\frac{\xt{p}_2}{2\e^{s_c}}\big)-\lambda\big(\xt{x},\xt{k}+\frac{\xt{p}_1}{2\e^{s_c}}+\frac{\xt{p}_2}{2\e^{s_c}}\big)\Big)\Big]\\
&-\frac{1}{(2\pi)^d}\int d\xt{p}\frac{\widehat{R}_0(\xt{p})}{2\mathfrak{g}(\xt{p})}\Big[\frac{1}{\mathfrak{g}(\xt{\xt{p}})+i\e^\ga \big(\xt{k}-\frac{\xt{p}}{2\e^{s_c}}\big)\cdot\xt{p}}\Big(\lambda\big(\xt{x},\xt{k}-\frac{\xt{p}}{\e^{s_c}}\big)-\lambda(\xt{x},\xt{k})\Big)\\
&\hspace{1cm}-\frac{1}{\mathfrak{g}(\xt{\xt{p}})+i\e^\ga\big(\xt{k}+\frac{\xt{p}}{2\e^{s_c}}\big)\cdot\xt{p}}\Big(\lambda(\xt{x},\xt{k})-\lambda\big(\xt{x},\xt{k}+\frac{\xt{p}}{\e^{s_c}})\Big)\Big],
\end{split}\]
and
\[\begin{split}
\tilde{H}_{2,\e}(t,\xt{x}_1,\xt{k}_1,\xt{x}_2,\xt{k}_2)=&\frac{1}{(2\pi)^{2d}i^2} \iint \widehat{V}(t/\e^{s+\ga},d\xt{p}_1) \widehat{V}(t/\e^{s+\ga},d\xt{p}_2) \\
&\times \frac{e^{i\xt{p}_1\cdot\xt{x}_1/\e^s}e^{i\xt{p}_2\cdot\xt{x}_2/\e^s}}{(\mathfrak{g}(\xt{\xt{p}_1})-i\e^\ga\xt{k}_1\cdot\xt{p}_1)(\mathfrak{g}(\xt{\xt{p}_1})+\mathfrak{g}(\xt{\xt{p}_2})-i\e^\ga(\xt{k}_1\cdot\xt{p}_1+\xt{k}_2\cdot\xt{p}_2))}\\
&\times \Big(\lambda\big(\xt{x}_1,\xt{k}_1-\frac{\xt{p}_1}{2\e^{s_c}}\big)-\lambda\big(\xt{x}_1,\xt{k}_1+\frac{\xt{p}_1}{2\e^{s_c}}\big)\Big)\\
&\times\Big(\lambda\big(\xt{x}_2,\xt{k}_2-\frac{\xt{p}_2}{2\e^{s_c}}\big)-\lambda\big(\xt{x}_2,\xt{k}_2+\frac{\xt{p}_2}{2\e^{s_c}}\big)\Big)\\
&-\frac{1}{(2\pi)^d}\int d\xt{p} \frac{\widehat{R}_0(\xt{p})e^{i\xt{p}\cdot(\xt{x}_1-\xt{x}_2)/\e^s}}{(\mathfrak{g}(\xt{p})-i\e^\ga\xt{k}_1\cdot\xt{p})(2\mathfrak{g}(\xt{p})-i\e^\ga(\xt{k}_1-\xt{k}_2)\cdot\xt{p} )}\\
&\times \Big(\lambda\big(\xt{x}_1,\xt{k}_1-\frac{\xt{p}}{2\e^{s_c}}\big)-\lambda\big(\xt{x}_1,\xt{k}_1+\frac{\xt{p}}{2\e^{s_c}}\big)\Big)\\
&\times\Big(\lambda\big(\xt{x}_2,\xt{k}_2-\frac{\xt{p}}{2\e^{s_c}}\big)-\lambda\big(\xt{x}_2,\xt{k}_2+\frac{\xt{p}}{2\e^{s_c}}\big)\Big).
\end{split}\]
In the same way as in Lemma \ref{bound4}, we have
\begin{equation}\label{boundlem}\begin{split}
\sup_{t}\E[\|\tilde{H}_{1,\e}(t)\|^2_{L^{\mathbb{R}^{2d}}}&+\|\tilde{H}_{2,\e}(t)\|^2_{L^{\mathbb{R}^{2d}}}]
\leq \e^{-2(\theta+2\beta)s_c} \\
&\times C \Big[\Big(\int_{\lvert \xt{p}\rvert<1} d\xt{p}\frac{1}{ \lvert \xt{p}\rvert ^{d+\theta-1} }\Big)^2+ \Big(\int_{\lvert \xt{p}\rvert>1} d\xt{p}\frac{1}{ \lvert \xt{p}\rvert ^{d+\theta}}\Big)^2\Big]\\
&\times \Big(\|D^2_{\xt{x}}\lambda\|^2_{L^2(\mathbb{R}^{2d})}+\|\nabla_{\xt{x}}\lambda\|^2_{L^2(\mathbb{R}^{2d})}+\|\lambda\|^2_{L^2(\mathbb{R}^{2d})}\Big),
\end{split}\end{equation}
and 
\[1+\ga-(\theta+2\beta)s_c>0 \Longleftrightarrow s>(1-\ga\theta/(2\beta))/(1+\theta/(2\beta))=1/(2\kappa_\ga), \]
that concludes the proof of Lemma \ref{bound6}.
$\square$
\end{proof}
\begin{proof}[of Lemma \ref{MGlimit}]
We want to pass to the limit $\e\to 0$ in \eqref{MGe}. Making the change of variable $\xt{p}'=\xt{p}/\e^{s_c}$ in \eqref{G1e} and \eqref{G2e}, the $\e$-power in \eqref{A2} becomes $1-s-\theta s_c$. Consequently, the particular choice $s_c=(1-s)/\theta$ is made to obtain a nontrivial limit of \eqref{A2} as $\e\to 0$. Consequently, with the change of variable $\xt{p}'=\xt{p}/\e^{s_c}$ in \eqref{G2e}, and using the Riemann-Lebesgue Lemma and the dominated convergence theorem, we have
\[\lim_{\e} \| \e^{1-s} G_{2,\e}\lambda\|^2 _{L^2(\mathbb{R}^{4d})}=0,\]
since $s_c=(1-s)/\theta<s$. Moreover, with the change of variable $\xt{p}'=\xt{p}/\e^{s_c}$ in \eqref{G1e} and the dominated convergence theorem, we have
\begin{equation}\label{bound8}\lim_{\e} \|  \e^{1-s}G_{1,\e}\lambda-G_1\lambda\|^2 _{L^2(\mathbb{R}^{2d})}=0,\end{equation}
where 
\begin{equation}\label{G1}\begin{split}
G_{1}\lambda(\xt{x},\xt{k})&=\frac{a(0)}{(2\pi)^d}\int d\xt{p}\frac{1}{\lvert \xt{p}\rvert^{d+\theta}}\Big(\lambda(\xt{x},\xt{k}+\xt{p})+\lambda(\xt{x},\xt{k}-\xt{p})-2\lambda(\xt{x},\xt{k})\Big)\\
&=\frac{2a(0)}{(2\pi)^d}\int d\xt{p}\frac{1}{\lvert \xt{p}\rvert^{d+\theta}}\Big(\lambda(\xt{x},\xt{k}+\xt{p})-\lambda(\xt{x},\xt{k})\Big)\\
&=-\sigma(\theta)(-\Delta)^{\theta/2}\lambda.
\end{split}\end{equation}
Consequently, we have
\[
\mathcal{A}^\e(f^\e _0 +f^\e _1+f^\e_2)(t)=f'(W_{\e,\lambda}(t))\big<W_\e(t),G_{1}\lambda\big>_{L^2(\mathbb{R}^{2d})}+o(1),
\]
that concludes the proof of Lemma \ref{MGlimit} using Lemma \ref{bound2}, Lemma \ref{bound6}.
$\square$
\end{proof}
$\blacksquare$
\end{proof}

\section{Proof of Theorem \ref{thasymptotic3}}\label{proof2}

The proof of this theorem is quite similar to the previous one. The proof of the tightness is exactly the same. However, in this theorem we have assumed $s_c=s$, but also either $\beta<1/2$ or $\ga>0$.  To characterize the accumulation points we use exactly the same perturbed test functions as in the proof of Theorem \ref{thasymptotic}, to obtain
\begin{equation}\label{A2bis}\begin{split}
\mathcal{A}^\e(f^\e _0 +f^\e _1+f^\e_2)(t)=&\e^{1-s}f'(W_{\e,\lambda}(t))\big<W_\e(t),G_{1,\e}\lambda\big>_{L^2(\mathbb{R}^{2d})}\\
&+\e^{1-s}f''(W_{\e,\lambda}(t))\big<W_\e(t)\otimes W_\e(t),G_{2,\e}\lambda \big>_{L^2(\mathbb{R}^{4d})}\\
&+o(1),
\end{split}\end{equation}
where $G_{1,\e}\lambda$ is defined by \eqref{G1e}, and $G_{2,\e}\lambda$ by \eqref{G2e} with $s=s_c$. However, we still have \eqref{bound8} for the drift term $f'$, but with the change of variable $\xt{p}'=\xt{p}/\e^{s}$ there are no fast phases anymore, and we obtain that
\[\begin{split}
M_{f,\lambda}(t)=f (W_\lambda(t)) - f(W_\lambda(0))-\int _0^t &  f'(W_{\lambda}(u))\big<W(u),G_{1}\lambda\big>_{L^2(\mathbb{R}^{2d})}\\
&+ f''(W_{\lambda}(u))\big<W(u)\otimes W(u),G_{2}(\lambda,\lambda)\big>_{L^2(\mathbb{R}^{4d})}du
\end{split}\]
is a martingale where $G_1$ is defined by \eqref{G1}, and
\begin{equation}\label{G2}\begin{split}
G_2(\lambda_1,\lambda_2)(\xt{x}_1,\xt{k}_1,\xt{x}_2,\xt{k}_2)=&-\frac{1}{(2\pi)^d}\int \frac{d\xt{p}}{\lvert \xt{p}\rvert ^{d+\theta}} e^{i\xt{p}\cdot(\xt{x}_1-\xt{x}_2)}  \\
&\times \Big(\lambda_1\big(\xt{x}_1,\xt{k}_1-\frac{\xt{p}}{2}\big)-\lambda_1\big(\xt{x}_1,\xt{k}_1+\frac{\xt{p}}{2}\big)\Big)\\
&\times\Big(\lambda_2\big(\xt{x}_2,\xt{k}_2-\frac{\xt{p}}{2}\big)-\lambda_2\big(\xt{x}_2,\xt{k}_2+\frac{\xt{p}}{2}\big)\Big).
\end{split}
\end{equation}
In this theorem the second order term $f''$ has not been killed by the Riemann-Lebesgue Lemma, so that the limiting point $W$ is not deterministic anymore. As a result we need to study the finite dimensional distributions
\[\lim_\e \E\Big[\prod_{j=1}^N \big<W_\e(t_j),\lambda_j\big>^{n_j}_{L^2(\mathbb{R}^{2d})}\Big]\]
to characterize all the accumulation points of $(W_\e)_\e$. To do that, we follow the technique used in \cite{fannjianguni,weinryb} and let us consider the tensorial process $W^M_\e (t)=\bigotimes_{j=1}^M W_\e (t)$ on $L^2(\mathbb{R}^{2dM})$. In the same way as the case $M=1$, we can show that $W^M_\e (t)$ is a tight process in $L^2(\mathbb{R}^{2dM})$ equipped with the weak topology and for all its accumulation points $W^M$ 
\[
M_{\lambda}(t)=W^M_\lambda(t) - W^M_\lambda(0)-\int _0^t  \big<W^M (u),G^M_{1}\lambda+\tilde{G}^M_{2}\lambda\big>_{L^2(\mathbb{R}^{2dM})}du
\]
is a martingale for all $\lambda \in L^2(\mathbb{R}^{2dM})$. Here, $G^M_1$ and $\tilde{G}^M_2$ are defined by
\[
G^M_1\lambda=-\sum_{j=1}^M \sigma(\theta) (-\Delta_{\xt{k_j}})^{\theta/2}\lambda
\] 
and 
\[
\begin{split}
\tilde{G}^M_2\lambda(&\xt{x}_1,\xt{k}_1,\cdots,\xt{x}_M,\xt{k}_M)\\
&=-\sum_{\substack{j_1,j_2=1\\j_1\not=j_2}}^M\frac{1}{(2\pi)^d}\int \frac{d\xt{p}}{\lvert \xt{p}\rvert ^{d+\theta}} e^{i\xt{p}\cdot(\xt{x}_{j_1}-\xt{x}_{j_2})}  \\
&\times \Big(\lambda\big(\xt{x}_1,\xt{k}_1,\cdots,\xt{x}_{j_1},\xt{k}_{j_1}-\frac{\xt{p}}{2},\cdots,\xt{x}_M,\xt{k}_M\big)-\lambda\big(\xt{x}_1,\xt{k}_1,\cdots,\xt{x}_{j_1},\xt{k}_{j_1}+\frac{\xt{p}}{2},\cdots,\xt{x}_M,\xt{k}_M\big)\Big)\\
&\times \Big(\lambda\big(\xt{x}_1,\xt{k}_1,\cdots,\xt{x}_{j_2},\xt{k}_{j_2}-\frac{\xt{p}}{2},\cdots,\xt{x}_M,\xt{k}_M\big)-\lambda\big(\xt{x}_1,\xt{k}_1,\cdots,\xt{x}_{j_2},\xt{k}_{j_2}+\frac{\xt{p}}{2},\cdots,\xt{x}_M,\xt{k}_M\big)\Big).
\end{split}
\]
As a result,  $\E[W^N]$ is a weak solution of the differential equation
\begin{equation}\label{equnicite}\partial_t \lambda^M=(G^M_1+\tilde{G}^M_2)\lambda^M.\end{equation}
Let $\lambda_0\in L^2(\mathbb{R}^{2dM})$ such that its Fourier transform with respect to $(\xt{k}_1,\cdots,\xt{k}_M)$, $\widehat{\lambda}^\xt{k}_0$, belongs to $\mathcal{C}^\infty_0(\mathbb{R}^{2dM})$. Solving \eqref{equnicite} in the Fourier domain, we show the existence and uniqueness  of a smooth function $\lambda^M$ of \eqref{equnicite} in the strong sense with initial condition $\lambda_0$. As result, if $\E[W^M(0)]=0$, for all $T>0$,
\[\begin{split}
\big<\E[W^M(T)],\lambda_0\big>_{L^2(\mathbb{R}^{2dM})}&=\big<\E[W^M(T)],\tilde{\lambda}^M(T)\big>_{L^2(\mathbb{R}^{2dM})}\\
&=\int_0^T \big<\E[W^M(t)], \partial_t \tilde{\lambda}^M(t)+\sigma(\theta)(-\Delta)^{\theta/2} \tilde{\lambda}^M(t)\big>_{L^2(\mathbb{R}^{2dM})} dt=0,
\end{split}\]
for $ \tilde{\lambda}^M(t)= \lambda^M(T-t)$. Consequently, by a density argument, we obtain the weak uniqueness of \eqref{equnicite}, and the moments
\[\E\Big[ W^M(t,\xt{x}_1,\xt{k}_1,\cdots,\xt{x}_M,\xt{k}_M)\Big]=\E\Big[\prod_{j=1}^M W(t,\xt{x}_j,\xt{k}_j)\Big]\]
are therefore uniquely determined for all accumulation point $W$. Let
\[\tilde{W}(t,\xt{x},\xt{k})=\frac{1}{(2\pi)^d}\int d\xt{q} \widehat{W}^\xt{k}_0(\xt{x},\xt{q})\exp\Big(i\xt{k}\cdot\xt{q}+i \int \mathcal{B}_t(d\xt{p})e^{i\xt{p}\cdot\xt{x}}( e^{-i\xt{q}\cdot \xt{p}/2} - e^{i\xt{q}\cdot \xt{p}/2} ) \Big).\]
Using the îto's formula and the weak uniqueness of \eqref{equnicite}, we obtain for all accumulation point $W^M$ of $(W^M_\e)_\e$
\[
\E\Big[ W^M(t,\xt{x}_1,\xt{k}_1,\cdots,\xt{x}_M,\xt{k}_M)\Big]=\E\Big[\prod_{j=1}^M \tilde{W}(t,\xt{x}_j,\xt{k}_j)\Big].
\]
Consequently, we have identified the one-dimensional finite distributions for all accumulation point $W$,
\[\lim_\e \E\Big[\prod_{j=1}^N \big<W_\e(t),\lambda_j\big>^{n_j}_{L^2(\mathbb{R}^{2d})}\Big]=\E\Big[\prod_{j=1}^N \big<W(t_j),\lambda_j\big>^{n_j}_{L^2(\mathbb{R}^{2d})}\Big]=\E\Big[\prod_{j=1}^N \big<\tilde{W}(t),\lambda_j\big>^{n_j}_{L^2(\mathbb{R}^{2d})}\Big].\]

To conclude the proof of Theorem \ref{thasymptotic3}, following a classical argument regarding the proof of uniqueness of martingale problems \cite[ Proposition 4.27 pp. 326]{Karatzas}: If the one-dimensional distributions of two solutions are the same, then their finite dimensional distributions are also the same. Consequently,
 \[\lim_\e \E\Big[\prod_{j=1}^N \big<W_\e(t_j),\lambda_j\big>^{n_j}_{L^2(\mathbb{R}^{2d})}\Big]=\E\Big[\prod_{j=1}^N \big<W(t_j),\lambda_j\big>^{n_j}_{L^2(\mathbb{R}^{2d})}\Big]=\E\Big[\prod_{j=1}^N \big<\tilde{W}(t_j),\lambda_j\big>^{n_j}_{L^2(\mathbb{R}^{2d})}\Big].\]

\end{document}